\newtheorem{thm}{Theorem}[section]
\newtheorem{cor}[thm]{Corollary}
\newtheorem{lem}[thm]{Lemma}
\newtheorem{prop}[thm]{Proposition}
\theoremstyle{definition}
\newtheorem{assu}[thm]{Assumption}
\newtheorem{defn}[thm]{Definition}
\numberwithin{equation}{section}
\def\bR{\mathbb{R}}
\def\bN{\mathbb{N}}
\theoremstyle{remark}
\newtheorem{rem}[thm]{Remark}
\begin{document}
	
	\title{Energy Transfer, Weak Resonance, and Fermi's Golden Rule in Hamiltonian Nonlinear Klein-Gordon Equations}
	
	\author{Zhen Lei \footnotemark[1]\ \footnotemark[2]
		\and Jie Liu    \footnotemark[1]\ \footnotemark[3]
		\and Zhaojie Yang  \footnotemark[1]\ \footnotemark[4]
	}
	\renewcommand{\thefootnote}{\fnsymbol{footnote}}
	\footnotetext[1]{School of Mathematical Sciences; LMNS and Shanghai Key Laboratory for Contemporary Applied Mathematics, Fudan University, Shanghai 200433, P. R.China.} \footnotetext[2]{Email: zlei@fudan.edu.cn}
	\footnotetext[3]{Email: j\_liu18@fudan.edu.cn}
	\footnotetext[4]{Email: yangzj20@fudan.edu.cn}
	
	\date{\today}
	
	\maketitle
	
	\begin{abstract}
		This paper focuses on a class of nonlinear Klein-Gordon equations in three dimensions, which are Hamiltonian perturbations of the linear Klein-Gordon equation with potential. The unperturbed dynamical system has a bound state with frequency $\omega$, a spatially localized and time periodic solution. In quantum mechanics, metastable states, which last longer than expected, have been observed. These metastable states are a consequence of the instability of the bound state under the nonlinear Fermi's Golden Rule. 
		
		In this study, we explore the underlying mathematical instability mechanism from the bound state to these metastable states. Besides, we derive the sharp energy transfer rate from discrete to continuum modes, when the discrete spectrum was not close to the continuous spectrum of the Sch\"ordinger operator $H= -\Delta + V + m^2$, i.e. weak resonance regime $ \sigma_c(\sqrt{H}) = [m, \infty)$, $0< 3\omega < m$. This extends the work of Soffer and Weinstein \cite{SW1999} for resonance regime $3\omega > m$ and confirms their conjecture in \cite{SW1999}. 
		Our proof relies on a more refined version of normal form transformation of Bambusi and Cuccagna \cite{BC}, the generalized Fermi's Golden Rule, as well as certain weighted dispersive estimates.

		
	\end{abstract}
	
	\tableofcontents

	\section{Introduction}
	\subsection{Background}
	The well-known Kolmogorov-Arnold-Moser (KAM) theory is concerned with the persistence of periodic and quasi-periodic motion under the Hamiltonian perturbation of a dynamical system, which has many applications in various research fields, such as the N-body problem in celestial mechanics  \cite{K,L}. For a finite dimensional integrable Hamiltonian system, the aforementioned theory was initiated by Kolmogorov \cite{K} and then extended by Moser \cite{M} and Arnold \cite{A}. Subsequently, many efforts have been focused on generalizing the KAM theory to infinite dimensional Hamiltonian systems (Hamiltonian PDEs), wherein solutions are defined on compact spatial domains, such as \cite{Bo,CW,Ku}. In all of the above results, appropriate non-resonance conditions imply the persistence of periodic and quasi-periodic solutions. See \cite{L,W} and the references therein for a comprehensive survey.
	
	In this study, we focus on a different phenomenon that resonance conditions lead to the instability of periodic or quasi-periodic solutions. For example, in quantum mechanics, under a small perturbation, an excited state could be unstable with energy shifting to the ground state, free waves and nearby excited states. In this process anomalously long-lived states called metastable states in physics literature were observed. To study this instability mechanism, Dirac, in 1927, considered an unperturbed Hamiltonian $H_0$ with the initial eigenstate $i(x)$, and the perturbing Hamiltonian $H_1$ with the final eigenstate $f(x)$, then calculated the transition probability per unit time from the initial eigenstate to the final eigenstate 
	\begin{align}\label{Dirac}
		\Gamma_{i \rightarrow f}=\frac{2 \pi}{\hbar}\left|\int_{\mathbb{R}^{3}} i(x) H_1(x) f(x) d x\right|^{2} \cdot \rho_{f},
	\end{align}
	where $\rho_{f}$ is the density of the final states. This was utilized by Fermi in 1934 to establish his famous theory of beta decay, where he called the formula \eqref{Dirac} ``golden rule No. 2,'' thus now known as ``Fermi's Golden Rule."
	
	The rigorous mathematical analysis of the instability mechanism of bound states and energy transfer rates were acquired quite late and remain largely open. The first significant result of such problem for a nonlinear PDE was provided by Soffer and Weinstein \cite{SW1999}. Consider the Cauchy problem for a nonlinear Klein-Gordon equation with potential in the whole space,
	\begin{equation}\label{NLKG}
		\begin{cases}
			\partial^2_t u-\Delta u + m^2u + V(x) u = \lambda u^3, & t>0, x\in \mathbb{R}^3, \lambda\in \mathbb{R},\\
			u(x,0) = u_0(x), \quad \partial_t u(x,0) = u_1(x).
		\end{cases} 
	\end{equation}
	This equation is a Hamiltonian system with energy 
	\begin{align*}
		\mathcal{E}[u,\partial_{t} u]\equiv \frac12\int (\partial_t u)^2 + |\nabla u|^2 + m^2 u^2 + V(x)u^2 dx - \frac{\lambda}{4}  \int u^4 dx.
	\end{align*}
	The nonlinearity of $u^3$ can certainly be replaced by a general Hamiltonian one. See \cite{SW1999} for more details. Assume that the potential function $V(x)$ satisfies the following properties:
	\begin{assu}\label{assumption-V}
		Let $V (x)$ be real-valued function such that\\
		(V1) for $\delta > 5$ and $|\alpha| \le 2, |\partial^\alpha V (x)| \le  C_{\alpha} \langle x\rangle^{-\delta}$.\\
		(V2) zero is not a resonance of the operator $-\Delta + V$.\\
		(V3) $(-\Delta + 1)^{-1} \left((x\cdot\nabla)^l V(x)\right) (-\Delta + 1)^{-1}$ is bounded on $L^2$ for $|l| \le N_{\ast}$ with $N_{\ast} \ge 10$.\\
		(V4) the operator
		$H = -\Delta + V (x) + m^2 \triangleq B^2$
		has a continuous spectrum, $\sigma_{c}(H) = [m^2, +\infty)$, and a unique strictly positive
		simple eigenvalue, $\omega^2<m^2$ with the associated normalized eigenfunction $\varphi(x)$:
		$$B^2\varphi =\omega^2\varphi.$$
	\end{assu}
	Under these assumptions, one can observe that the linear Klein-Gordon equation \eqref{NLKG}, with $\lambda=0$, has a two-parameter family of spatially localized and time-periodic solutions of the form:
	\begin{align*}
		u(t,x) = R \cos(\omega t+\theta)\varphi(x).
	\end{align*}
	For $\lambda\neq 0$, i.e., Hamiltonian nonlinear perturbation of the linear dispersive equation, instead of the KAM type results, Soffer and Weinstein \cite{SW1999} proved that if $3\omega > m$ and the following resonance condition (analogous Fermi's Golden Rule Condition) was met,
	\begin{equation}\label{FGR-SW}
		\Gamma\equiv \frac{\pi}{3\omega}\left(\mathbf{P}_c\varphi^3(x), \delta(B-3\omega)\mathbf{P}_c\varphi^3(x)\right) \equiv \frac{\pi}{3\omega} |(\mathcal{F}_c\varphi^3)(3\omega)|^2>0.
	\end{equation}
	where $\mathbf{P}_c$ denotes the projection onto the continuous spectral part of $B$ and $\mathcal{F}_c$ denotes the Fourier transform relative to the continuous spectral part of $B$, then small global solutions to \eqref{NLKG} decayed to zero at an anomalously slow rate as time tended to infinity. More precisely, the solution $u(t,x)$ had the following expansion as $t\to \pm\infty$:
	\begin{align*}
		u(t,x) = R(t) \cos(\omega t+\theta(t))\varphi(x) +\eta(t,x),
	\end{align*}
	where
	\begin{align*}
		R(t) = \mathcal{O}(|t|^{-\frac 14}), \theta(t) = \mathcal{O}(|t|^{\frac 12}) \text{~and~} \quad  \|\eta(t,\cdot)\|_{L^8} = \mathcal{O}(|t|^{-\frac 34}).
	\end{align*}
	This result provides the exact rate of energy transfer from discrete to continuum modes. As a corollary, there are no small global periodic or quasi-periodic solutions to \eqref{NLKG}. Besides, the instability mechanism of the bound state is revealed through the nonlinear analogous Fermi's Golden Rule \eqref{FGR-SW} as the key resonance coefficient in the dynamical equation of the discrete mode. Thus, this non-negative coefficient reflects the nonlinear resonant interaction between the bound state (eigenfunctions) and the radiation (continuous spectral modes).
	
	The Fermi's Golden Rule condition \eqref{FGR-SW} implies that 
	\begin{equation*}
		3\omega \in \sigma_{c}(B)=[m,+\infty),
	\end{equation*}
	which ensures a strong coupling of the discrete spectrum to the continuous spectrum. For the weak resonance regime, i.e. the discrete spectrum $\omega$ is not close to the continuous spectrum $3\omega < m$, the Fermi's Golden Rule \eqref{FGR-SW} fails, and the method proposed by Soffer and Weinstein \cite{SW1999} cannot be directly applied to derive a similar result. Nevertheless, the authors in \cite{SW1999} conjectured that a similar Fermi's Golden Rule holds, and the solutions decay to zero at certain rates for the general case. Since then, many efforts have been focused on this conjecture, i.e. the instability mechanism and energy transfer rate of metastable states in a weak resonance regime \cite{AS,BC}. We emphasize that the energy transfer rate is crucial to obtain a quantitative description on the lifespan of metastable states.
	
	\subsection{Related Works}	
	For the nonlinear Klein-Gordon equations with zero potential, the global well-posedness and scattering theory for small initial data has been established in several works. For instance, see \cite{Kl} and \cite{Sh} for the three dimensional case, \cite{OTT} and \cite{ST} for the two dimensional case, and \cite{De}, \cite{HN} for the one dimensional case with the modified scattering results. Note that these solutions satisfy the linear dispersive estimates.
	
	When potentials are present $V(x)\not\equiv 0$ but the operator $B$ has no eigenvalues, the same decay results in linear Klein-Gordon equations are true in three space dimensions, see \cite{Yajima}. Several interesting studies focus on quadratic nonlinearities and variable coefficients in one dimensional case, see \cite{GP}, \cite{LLS}. The lower bound of the decay rate has also been proved using an alternative approach in a recent work by An--Soffer \cite{AS} when $B$ has one simple eigenvalue lying close to the continuous spectrum, i.e., the case in \cite{SW1999}. And in the recent interesting work \cite{LP2021}, the authors extended the results of \cite{SW1999} to quadratic nonlinearities and obtained the sharp decay rates. Finally, in the remarkable work \cite{BC}, for initial data in $H^1 \times L^2$ and a general potential which allows for multiply eigenvalues and arbitrary gaps between the discrete and continuous spectra, Bambusi and Cuccagana proposed a new normal form transformation preserving the Hamiltonian structure and proved that small solutions were asymptotically free under a non-degeneracy hypothesis. Moreover, they discovered the instability mechanism of bound states through a nonlinear Fermi's Golden Rule, whose coefficient was never negative and generically strictly positive. We mention that for initial data in $H^1 \times L^2$, decay estimates do not hold due to the conservation of energy. Nevertheless, the authors in \cite{BC} remarked that it was possible to prove appropriate decay rates by restricting initial data to the class in \cite{SW1999}. And in this paper we settle down this problem in our context.
	
	We also mention that the Fermi's Golden Rule was first introduced by Sigal \cite{Sigal} in mathematical context, wherein the author established the instability mechanism of quasi-periodic solutions to nonlinear Schr\"odinger and wave equations. This was then further extended to study the asymptotic stability of bound states of nonlinear Schr\"odinger equations by Tsai--Yau \cite{TY}, Soffer--Weinstein \cite{SW04}, Gang \cite{Gang}, Gang--Sigal \cite{GS}, Gang--Weinstein \cite{GW}; see also the recent advances by Cuccagna--Maeda \cite{CM1}, their survey \cite{CM2} and references therein.

	\subsection{Main Result}
	Our main result is the following theorem:
	
	\begin{thm}\label{main-result}
		Let $(2N-1)\omega<m<(2N+1)\omega$ for some $N\in\bN$ and $C_0$ be a positive constant. Under Assumption \ref{assumption-V}, if the following resonance condition ($\gamma$ is defined in Section \ref{Sec-isolation of the key resonant} )
		\begin{align*}
			\gamma >0
		\end{align*}
		holds, then there exists a sufficiently small constant $\epsilon>0$ such that for any initial data $u_0, u_1$ satisfying 
		$$\|u_0\|_{W^{2,2}\cap W^{2,1}} +\|u_1\|_{W^{1,2}\cap W^{1,1}} \le \epsilon$$ 
		and
		\begin{equation}\label{assumption-data} 
			\|\mathbf{P}_c u_0\|_{W^{2,2}\cap W^{2,1}}+\|\mathbf{P}_c u_1\|_{W^{1,2}\cap W^{1,1}}\le C_0 \left(\|\mathbf{P}_d u_0\|_{L^2}+ \|\mathbf{P}_d u_1\|_{L^2}\right), 
		\end{equation}  
		solution $u(t,x)$ to \eqref{NLKG} with $\lambda\neq 0$  has the following expansion as $t\to \pm\infty$:
		\begin{align*} 
			u(t,x) = R(t) \cos(\omega t+\theta(t))\varphi(x) +\eta(t,x),
		\end{align*}
		where
		\begin{align}
			\frac{\frac{1}{C}R(0)}{(1+4N\lambda^{2N}|R(0)|^{4N}\gamma t)^{\frac{1}{4N}}}\le R(t)\le \frac{CR(0)}{(1+4N \lambda^{2N}|R(0)|^{4N}\gamma t)^{\frac{1}{4N}}} ,\label{R}\\
			\theta(t) = \mathcal{O}(|t|^{1-\frac{1}{2N}}), \quad  \|\eta(t,\cdot)\|_{L^8} = \mathcal{O}(|t|^{-\frac{3}{4N}}).\label{theta}
		\end{align}
		for some positive constant $C>0$. Here, $\mathbf{P}_d$ and $\mathbf{P}_c$ denote the projections onto the discrete and continuous spectral part of $B$ respectively,
	\end{thm}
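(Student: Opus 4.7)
The plan is to follow the general scheme initiated by Soffer--Weinstein and refined by Bambusi--Cuccagna, but pushed to order $2N+1$ so that the first genuine coupling with the continuous spectrum appears at frequency $(2N+1)\omega\in\sigma_c(B)$. First, I would rewrite \eqref{NLKG} as a first-order Hamiltonian system by setting $\psi=\tfrac{1}{\sqrt{2}}\bigl(B^{1/2}u+iB^{-1/2}\partial_t u\bigr)$, so that the linear evolution becomes $i\partial_t\psi = B\psi$. Projecting onto $\varphi$ and $\mathbf{P}_c$, I would write $\psi(t,x) = z(t)\varphi(x)+\eta(t,x)$ with $\eta=\mathbf{P}_c\psi$, and derive the coupled system
\begin{equation*}
  i\dot z = \omega z + \langle \varphi, \mathcal{N}(z,\bar z,\eta)\rangle,\qquad
  i\partial_t\eta = B\eta + \mathbf{P}_c\mathcal{N}(z,\bar z,\eta),
\end{equation*}
where $\mathcal{N}$ is the Hamiltonian nonlinearity arising from $\lambda u^3$ expressed in terms of $(z,\bar z,\eta)$. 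The amplitude $R$ and phase $\theta$ are then recovered from the polar form of $z$.

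The heart of the argument is an iterative Birkhoff-type normal form. For each monomial $z^p\bar z^q\eta^r$ of degree at most $2N$, the small divisor $(p-q)\omega - B$ is invertible on $\mathrm{Ran}(\mathbf{P}_c)$ because $|p-q|\omega<m$, while divisors of the form $(p-q)\omega - \omega$ on the discrete space are either zero (giving harmless ``trivially resonant'' gauge terms $|z|^{2k}z$) or bounded away from zero. Adapting the Hamiltonian normal form of \cite{BC}, but keeping finer track of spatial weights and of the dependence on $\eta$ in a scale compatible with $W^{2,2}\cap W^{2,1}$, I would construct a symplectic near-identity change of variables that removes all non-resonant monomials up to order $2N$. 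At order $2N+1$ the monomial $z^{2N+1}$ produces the genuine resonance: the small divisor $(2N+1)\omega - B$ is \emph{not} invertible, and Plemelj's formula turns it into the distributional kernel $\mathrm{P.V.}\,((2N+1)\omega-B)^{-1} - i\pi\delta(B-(2N+1)\omega)$. The imaginary part yields the generalized Fermi's Golden Rule coefficient $\gamma\propto |(\mathcal{F}_c\varphi^{2N+1})((2N+1)\omega)|^2$ defined in Section~\ref{Sec-isolation of the key resonant}.

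After this reduction, $z$ satisfies an equation schematically of the form
\begin{equation*}
  i\dot z = \bigl(\omega+\text{real gauge terms in }|z|^{2k}\bigr)\,z - i\lambda^{2N}\gamma|z|^{4N}z + \text{error},
\end{equation*}
whence $\tfrac{d}{dt}|z|^{2} = -2\lambda^{2N}\gamma|z|^{4N+2} + \text{error}$, and integrating the main ODE produces the algebraic decay $R(t) \asymp R(0)(1+4N\lambda^{2N}|R(0)|^{4N}\gamma t)^{-1/(4N)}$ stated in \eqref{R}. The phase $\theta(t)$ is then obtained by integrating the gauge correction, which at leading order is of size $|z|^{2N}\sim t^{-1/2}$ and integrates to $\theta(t)=\mathcal O(|t|^{1-1/(2N)})$. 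For the dispersive part $\eta$, Duhamel's formula combined with the Yajima--type weighted $L^p$ dispersive estimates for $e^{-iBt}\mathbf{P}_c$ and the Strichartz-type bounds available under Assumption~\ref{assumption-V} give $\|\eta(t)\|_{L^8}\lesssim |t|^{-3/4}\|\mathbf{P}_c\psi_0\|_{W^{2,1}} + (\text{Duhamel source})$; the source, being driven by $z^{2N+1}\varphi^{2N+1}$ with $R\sim t^{-1/(4N)}$, gives the slower decay $|t|^{-3/(4N)}$ in \eqref{theta}.

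The main obstacle is closing a bootstrap with these much slower decay rates: for large $N$, the nonlinear error terms produced by the normal form transformation contain products of $\eta$ and high powers of $z$, and the naive pairing $R(t)\cdot\|\eta(t)\|$ is no longer integrable in $t$. This forces two refinements. First, the normal form must be pushed far enough in the $\eta$-variable so that all remaining terms are either manifestly higher order in $R$ or are ``super-resonant'' and therefore sit in the range of an invertible divisor with a uniform bound; the hypothesis \eqref{assumption-data} ensuring $\|\mathbf{P}_c u_0\|\lesssim R(0)$ is used precisely to balance $\eta$ against powers of $z$ throughout the bootstrap. Second, the Duhamel integrals must be handled with weighted-in-time dispersive estimates, integrating by parts against the fast oscillation $e^{-i(B-(2N+1)\omega)t}$ to trade decay in $t$ for one extra power of $R$, so that the source terms for $\dot{|z|^2}$ and for $\eta$ become integrable and the rates \eqref{R}--\eqref{theta} propagate globally.
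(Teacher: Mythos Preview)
Your overall architecture---Hamiltonian rewriting, Birkhoff normal form up to the first genuinely resonant order $2N+1$, extraction of $\gamma$ via Plemelj, the ODE $\tfrac{d}{dt}|z|^2=-2\lambda^{2N}\gamma|z|^{4N+2}+\text{error}$, and a bootstrap---matches the paper's route. Two points, however, need correction.

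First, a minor computational slip: the leading gauge term in $\partial_{\bar\xi}Z_0$ is $|\xi|^2\xi$ (degree four in $Z_0$), not $|\xi|^{2N}\xi$. Hence the phase derivative is $\dot\theta\sim|\xi|^2\sim t^{-1/(2N)}$, which integrates to $t^{1-1/(2N)}$. Your claimed intermediate $|z|^{2N}\sim t^{-1/2}$ would integrate to $t^{1/2}$, which only coincides with the stated bound when $N=1$. Also, the function appearing in $\gamma$ is not $\varphi^{2N+1}$ but a Schwartz function $\Phi_{0,2N+1}$ produced by $N$ rounds of normal form; only for $N=1$ does it reduce to $B^{-1/2}\varphi^3$.

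Second, and more substantively, your proposed mechanism for closing the bootstrap is not the one that actually works. You suggest integrating by parts against $e^{-i(B-(2N+1)\omega)t}$ to trade time decay for an extra power of $R$. The paper does use that device, but only for the pieces of $\mathcal{R}_f$ coming from the normal-form part $Z_1$ (where a genuine oscillatory phase is available). The obstruction lies elsewhere: in the Duhamel contribution of $\partial_{\bar f}\mathcal{R}_2$, schematically $|\xi|^2\cdot f$, there is no helpful fast phase to exploit, and the unweighted estimate
\[
\|B^{1/2}\hat{\mathcal R}_f^2\|_{L^4}\lesssim\int_0^t|t-s|^{-1/2}\,|\xi(s)|^2\,\|f(s)\|\,ds
\]
fails to close because $|\xi|^2\sim s^{-1/(2N)}$ is not integrable against $|t-s|^{-1/2}$ when $N>1$. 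The paper's fix is different from what you propose: it replaces $\|B^{1/2}f\|_{L^4}$ by the \emph{spatially} weighted norm $\|\langle x\rangle^{-\sigma}B^{1/2}f\|_{L^4}$ and uses an interpolated weighted dispersive estimate (Lemma~\ref{Weighted-Lp-Lp'}) that gains an extra $|t|^{-3/p'}$ in the propagator. This is admissible precisely because the normal form coefficients $F_d^{(r)}$, $\mathbf{\Lambda}_{dlk}^{(r)}$ are Schwartz in $x$, so the source $\partial_{\bar f}\mathcal{R}_2$ absorbs the compensating weight $\langle x\rangle^{+\sigma}$. Without this spatial-weight trick the bootstrap does not close for $N>1$, and your proposal as written does not supply an alternative that would.
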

	
	\begin{rem}
		Evidently, the case where $N = 1$ is exactly the work of Soffer-Weinstein \cite{SW1999}. We point out that the case of $m = (2N + 1)\omega$ is beyond the scope of our work, and it remains unsolved.
	\end{rem}
	
	
	\begin{rem} We indicate that $R(0)$ can be of order one by choosing a sufficiently small $\lambda$ and a simple scaling argument.
		The assumption on initial data
		$$\|\mathbf{P}_c u_0\|_{W^{2,2}\cap W^{2,1}}+\|\mathbf{P}_c u_1\|_{W^{1,2}\cap W^{1,1}}\lesssim \|\mathbf{P}_d u_0\|_{L^2}+ \|\mathbf{P}_d u_1\|_{L^2},$$
		see \eqref{assumption-data}, is necessary, which leads to resonance-dominated solutions with the decay rates \eqref{R} and \eqref{theta}. Indeed, this assumption should also be imposed in \cite{SW1999} and other related references. As pointed out by Tsai and Yau in \cite{TY}, when
		$$\|\mathbf{P}_c u_0\|_{W^{2,2}\cap W^{2,1}}+\|\mathbf{P}_c u_1\|_{W^{1,2}\cap W^{1,1}}\gg \|\mathbf{P}_d u_0\|_{L^2}+ \|\mathbf{P}_d u_1\|_{L^2},$$ there exist dispersion-dominated solutions with faster decay rates, even in the case of Soffer and Weinstein \cite{SW1999}. 
	\end{rem}
	\begin{rem}
		The decay rate \eqref{R} is consistent with the numerical prediction in \cite{Bi}.
	\end{rem}
	
	\subsection{Outline of Proof}
	Now we explain the ideas of our proof. By setting $u_{\lambda}(t,x)=|\lambda| ^{1/2}u(t,x)$, we may assume $\lambda=\pm 1$. We adapt the framework of Hamiltonian method proposed in \cite{BC} to derive the dynamical equations of discrete and continuous modes. By changing variables, the nonlinear Klein-Gordon equations \eqref{NLKG} can be wirtten as the following Hamilton equations (see Section \ref{Sec-normal form transformation} for details)
	$$
	\dot{\xi}=-\mathrm{i} \frac{\partial H}{\partial \bar{\xi}}, \quad \dot{f}=-\mathrm{i} \nabla_{\bar{f}} H.
	$$
	with the corresponding Hamiltonian
	$$
	\begin{aligned}
		H &=H_{L}+H_{P}, \\
		H_{L} &= \omega\left|\xi\right|^{2}+\langle\bar{f}, B f\rangle, \\
		H_{P} &=\int_{\mathbb{R}^{3}} \left(\sum \frac{\xi+\bar{\xi}}{\sqrt{2 \omega}} \varphi(x)+U(x)\right)^4 d x,
	\end{aligned} $$
	where $\nabla_{\bar{f}} H$ is the gradient with respect to the $L^{2}$ metric, and $U=B^{-\frac{1}{2}}(f+\bar{f}) / \sqrt{2} \equiv P_{c} u$. 
	
	To further simplify the dynamics of discrete mode and decouple the discrete mode from the continuous ones, we proposed a refined Birkhoff normal form transformation compared to the one in \cite{BC}. More precisely, we prove that for any $0\le r\le 2N$ there exists an analytic canonical transformation $\mathcal{T}_{r}$ putting the system in normal form up to order $2r+4$, i.e.
	$$
	H^{(r)}:=H \circ \mathcal{T}_{r}=H_{L}+Z^{(r)}+\mathcal{R}^{(r)}
	$$
	where $Z^{(r)}$ is a polynomial of degree $2r+2$ which is in normal form (see Definition \ref{def:normalform}), and $\mathcal{R}^{(r)}$ consists of error terms of higher order, see Theorem \ref{thm:nft}. The normal form transformation is canonical and preserves the Hamiltonian nature of the system. Our key observation is that the order of normal form is actually increased by two in each step, which is consistent with the theoretic prediction by the non-Hamiltonian method of Soffer and Wesinsten \cite{SW1999}. Besides, we explore the explict forms of theses coefficients appeared in error terms, whose structure will be crutial in  error estimates.


	After applying the normal form transformation, we are able to extract the main terms of the continuous modes $f$ and obtain the dynamics of the discret mode as follows: 
	\begin{equation*}
		\frac{d}{dt}|\xi|^2=-2\gamma |\xi|^{4N+2}+2Re(\bar{\xi}\mathcal{R}_{\xi}),
	\end{equation*}
	where
	\begin{align*}
		\gamma :=& (2N+1)\langle \Phi_{0,2N+1},  \delta(B-(2N+1)\omega)\bar{\Phi}_{0,2N+1}\rangle \ge 0, \quad \Phi_{0,2N+1} \in \mathcal{S}\left(\mathbb{R}^{3}, \mathbb{C}\right),
	\end{align*}
	is the coefficient of nonlinear Fermi's Golden Rule, and $\mathcal{R}_{\xi}$ is an error term, see \eqref{eq:Rxi}. Thus, under the Fermi's Golden Rule condition $\gamma>0$, we could expect that the decay rate of the discrete mode $\xi(t)$ is $O(|t|^{-\frac{1}{4N}})$, provided that an appropriate estimate for the error term $\mathcal{R}_{\xi}$ is present.
	
	To close our argument, the estimate of error term $\mathcal{R}_{\xi}$ remains to be performed, which by \eqref{eq:Rxi} requires deriving the decay estimates of the continuous mode $f$. Once this is achieved, a bootstrap argument
	would imply Theorem \ref{main-result}. Since we are pursuing decay estimates in time, the space-time dispersive estimates from \cite{BC} are not applicable here. Besides, we shall see that the classical $L^p$ estimates also fail due to the slow decay rate of $\xi$ and $f$, especially for the weak resonance regime $N> 1$. To illustrate this, consider
	$$\hat{\mathcal{R}}_f :=  -\mathrm{i} \int_{0}^{t}e^{-\mathrm{i}B (t-s)}\partial_{\bar{f}}\mathcal{R}ds,$$
	which appears as an error term in the expression of the continuous mode $f$, see \eqref{f formula}. Here $\mathcal{R}$ is the error term coming from the normal form transformation 	
	$\mathcal{R}=\sum_{d=1}^{5}\mathcal{R}_{d}.$
	Denote
	$$\hat{\mathcal{R}}^2_f :=  -\mathrm{i} \int_{0}^{t}e^{-\mathrm{i}B (t-s)}\partial_{\bar{f}}\mathcal{R}_2ds.$$	
	When $N=1$ as in \cite{SW1999}, using linear dispersive estimates, and by virtue of the structure of $\mathcal{R}_d$ in Theorem \ref{thm:nft}, one has	
	\begin{align*}
		\|B^{-1/2}\hat{\mathcal{R}}^2_f\|_{L^{8}} & \lesssim \int_{0}^{t} \min \left\{|t-s|^{-\frac{9}{8}},|t-s|^{-\frac{3}{8}}\right\}|s|^{-\frac{1}{2}}[\xi]_{\frac{1}{4}}^{2}\left(\|B^{-1/2}f\|_{L^{8}}+\|B^{1/2}f\|_{L^{4}}\right)ds, \\
		\|B^{1/2} \hat{\mathcal{R}}^2_f\|_{L^{4}} & \lesssim \int_{0}^{t}|t-s|^{-\frac{1}{2}}|s|^{-\frac{1}{2}}[\xi]_{\frac{1}{4}}^{2}\left(\|B^{-1/2}f\|_{L^{8}}+\|B^{1/2}f\|_{L^{4}}\right)ds,
	\end{align*}
	which yield
	\begin{align*}
		\|B^{-1/2}\hat{\mathcal{R}}^2_f\|_{L^{8}} & \lesssim |t|^{-1+\delta_{0}}[\xi]_{\frac{1}{4}}^{2}\left([B^{-1/2}f]_{8, \frac{3}{4}}+[B^{1/2} f]_{4,1 / 2-\delta_{0}}\right), \\
		\|B^{1/2} \hat{\mathcal{R}}^2_f\|_{L^{4}} & \lesssim |t|^{-\frac{1}{2}+\delta_{0}}[\xi]_{\frac{1}{4}}^{2}\left([B^{-1/2}f]_{8, \frac{3}{4}}+[B^{1/2} f]_{4,1 / 2-\delta_{0}}\right),
	\end{align*}
	where $[\xi]_{\alpha}=\sup _{0 \leq t \leq T}\langle t\rangle^{\alpha}|\xi(t)|,[g]_{p, \alpha}=\sup _{0 \leq t \leq T}\langle t\rangle^{\alpha}\|g\|_{L^{p}}$. Together with decay estimate of $\xi(t)=O(|t|^{-\frac 14})$, these estimates imply the desired error estimate of $\mathcal{R}_{\xi}$ for small data, which  is similar to the approach used in \cite{SW1999}. We emphasize that in the aforementioned estimates, the decay rate $O(|t|^{-\frac 14})$ of $\xi(t)$ is necessary and cannot be weakened.
	However, for the weak resonance regime $N>1$, the expected decay rate $\xi(t)=O(|t|^{-\frac{1}{4N}})$, which is much slower than required.  For instance, the linear dispersive estimates imply
	$$
	\|B^{1/2} \hat{\mathcal{R}}^2_f\|_{L^{4}} \lesssim \int_{0}^{t}|t-s|^{-\frac{1}{2}}|s|^{-\frac{1}{2N}}[\xi]_{\frac{1}{2N}}^{2}\left(\|B^{-1/2}f\|_{L^{8}}+\|B^{1/2}f\|_{L^{4}}\right),
	$$
	where
	$$
	\frac{1}{2}+\frac{1}{2N}<1.
	$$
	Hence, the decay estimates of $B^{-1/2}f, B^{1/2}f$ cannot be closed this way. 
	
	In this paper, we overcome this difficulty by applying the weighted $L^p$ estimates, see Lemma \ref{Weighted-Lp-Lp'}, which acquires time decay at the cost of the spatial decay. More precisely, instead of estimating $\|B^{1/2} f\|_{L^4}$, we consider the weighted $L^4$ norm $ \|\langle x\rangle^{-\sigma}B^{1/2} f\|_{L^4}$, then
	\begin{align*} 
		\|\langle x\rangle^{-\sigma}B^{1/2} f_{32}\|_{L^4}
		\lesssim & \int_0^t \min\{|t-s|^{-\frac 54}, |t-s|^{-\frac 12}\} |s|^{-\frac{1}{2N}}[\xi]^2_{\frac{1}{4N}} \left\| \langle x\rangle^{\sigma}\partial_{\bar{f}}\mathcal{R}_2\right\|_{W^{1,\frac 43}}\\
		\lesssim & \langle |\xi_0|^{4N}t \rangle^{-\frac{2N+2}{4N}}[\xi]^{2}_{\frac{1}{4N}}\big([\langle x \rangle^{-\sigma}B^{1/2}f]_{4,\frac{1}{2}}+[B^{-1/2}f]_{8,\frac{2N+1}{4N}}\big) .
	\end{align*}	
	Such a technique is based on the spatially localized property of $\partial_{\bar{f}}\mathcal{R}_2$ to absorb the weight $\langle x\rangle^{\sigma}$ on the RHS.

	\subsection{Structure of the Paper}
	The remaining part of this paper is organized as follows. In Section \ref{Sec-Preliminary}, we introduce some useful dispersive estimates and weighted inequalities for linear equations with potential, and we present the global existence theory and energy conservation of the nonlinear Klein-Gordon equation. In Section \ref{Sec-normal form transformation}, we begin our proof by performing Birkhoff normal form transformation. Then we isolate the key resonant terms in the dynamical equation of the discrete mode in Section \ref{Sec-isolation of the key resonant}. In Section \ref{Sec-aymptotic behavior}, we derive the asymptotic behavior of discrete and continuous modes. In Section \ref{Sec-error estimates}, the error terms are carefully estimated. In Section \ref{Sec-proof of main result}, we prove our main result using apriori estimates and bootstrap arguments.
	
	\subsection{Notations}
	Throughout our paper, we use $C$ to denote an absolute positive constant that may vary from line to line. We write $ A\lesssim B$ to mean that $A\le CB$ for some absolute constant $C>0$. We will use $A\approx B$ in a similar standard way.

	\section{Preliminary}\label{Sec-Preliminary} 
	In this section, we provide some useful lemmas on the linear analysis for the Klein-Gordon equation with potential and the global well-posedness theory of the nonlinear Klein-Gordon equation \eqref{NLKG}.
	
	\subsection{Linear Dispersive Estimates}
	Consider the Cauchy problem for three dimensional linear Klein-Gordon equation with a potential
	\begin{equation}\label{LKW}
		\begin{cases}
			\partial_t^2 u -\Delta u + m^2 u + V(x)u = 0, & \quad t>0, x\in \bR^3,\\
			u(0,x)=u_0,\quad \partial_{t} u(0,x)=u_1.
		\end{cases}
	\end{equation}
	Denote $B^2= -\Delta  + m^2  + V(x)$, then equation \eqref{LKW} can be solved as
	\begin{align*}
		u(t,x)= \cos{Bt}\ u_0 + \frac{\sin Bt}{B}\  u_1.
	\end{align*}

	For $V(x)=0$, i.e. free Klein-Gordon case, the standard $L^p$ dispersive estimates follow from an oscillatory integration method and the conservation of the $L^2$ norm. More precisely, the $L^p$ norm of the solution to $u(t,x)$ satisfies the dispersive decay estimate $\|u(t, \cdot)\|_{L^p} \leq C|t|^{-3(\frac12-\frac{1}{p})}$. 
	
	For $V(x)\neq 0$, if $V(x)$ satisfies some suitable decay and regularity conditions, then the same decay rate of $u$ can be obtained by the $W^{k,p}$-boundedness of the wave operator after being projected on the continuous spectrum of $B$. For instance, see \cite{JSS},\cite{RS},\cite{Yajima}. 
	\begin{lem}[$L^p$ dispersive estimates]\label{Lp-Lp'}
		Assume that $V(x)$ is a real-valued function and satisfies (V1),(V2). Let $1<p\le 2$, $\frac 1p+\frac{1}{p'}=1$, $0\le \theta \le 1$, $l=0,1$,  and $s= (4+\theta)(\frac 12-\frac{1}{p'})$. Then
		\begin{equation*}
			\|\mathrm{e}^{\mathrm{i}Bt}B^{-l}\mathbf{P}_{\mathrm{c}}\psi\|_{l,p'}\lesssim |t|^{-(2+ \theta)(\frac12-\frac{1}{p'})}\|\psi\|_{s,p}, \quad |t|\ge 1,
		\end{equation*}
		and
		\begin{equation*}
			\|\mathrm{e}^{\mathrm{i}Bt}B^{-l}\mathbf{P}_{\mathrm{c}}\psi\|_{l,p'}\lesssim |t|^{-(2- \theta)(\frac12-\frac{1}{p'})}\|\psi\|_{s,p}, \quad 0<|t|\le 1.
		\end{equation*}
	\end{lem}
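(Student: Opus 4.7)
The natural approach is to reduce the estimate with potential $V$ to the corresponding free Klein--Gordon dispersive estimate via the wave operators associated to $B$ and $B_0 := \sqrt{-\Delta + m^2}$. Under Assumption \ref{assumption-V}, in particular (V1) with $\delta > 5$ and the absence of a zero-resonance (V2), the results of Yajima and its refinements \cite{JSS,RS,Yajima} guarantee that the wave operators $W_{\pm}$ obtained as strong limits of $e^{itB^2}e^{-itB_0^2}$ as $t\to\mp\infty$ exist, are asymptotically complete, are bounded on $W^{k,p}(\bR^3)$ for $1<p<\infty$ and $0\le k\le 2$, and intertwine the two operators so that $f(B)\mathbf{P}_{c}=W_{\pm}f(B_0)W_{\pm}^{*}$ for bounded continuous $f$ on $[m,\infty)$. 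Applying this intertwining to $f(x)=e^{\mathrm{i}tx}x^{-l}$ yields the key identity
$$e^{\mathrm{i}Bt}B^{-l}\mathbf{P}_{c}=W_{\pm}\,e^{\mathrm{i}B_0 t}B_0^{-l}\,W_{\pm}^{*},$$
and reduces the assertion to a purely free-space dispersive estimate.

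For the free operator $B_0$ in three dimensions, the classical dispersive estimates of Brenner and Marshall--Strauss--Wainger, obtained by interpolating the $L^{2}$-isometry with the $L^{1}\to L^{\infty}$ endpoint bound for $e^{\mathrm{i}B_0 t}$, give
$$\|e^{\mathrm{i}B_0 t}\phi\|_{L^{p'}}\lesssim |t|^{-(2+\theta)(1/2-1/p')}\|\phi\|_{W^{s,p}}, \quad |t|\ge 1,$$
together with the short-time analogue $|t|^{-(2-\theta)(1/2-1/p')}$ for $0<|t|\le 1$, where $s=(4+\theta)(1/2-1/p')$. Here the endpoints $\theta=1$ and $\theta=0$ correspond respectively to the Schr\"odinger-like large-time decay, derived from stationary phase applied to the symbol $\sqrt{1+|\xi|^{2}}$, and to the wave-like short-time decay, while $\theta\in(0,1)$ simply records the standard trade-off between decay rate and number of derivatives lost.

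Combining these two ingredients, I estimate
$$\|e^{\mathrm{i}Bt}B^{-l}\mathbf{P}_{c}\psi\|_{l,p'}=\|W_{\pm}\,e^{\mathrm{i}B_0 t}B_0^{-l}W_{\pm}^{*}\psi\|_{W^{l,p'}}\lesssim \|e^{\mathrm{i}B_0 t}B_0^{-l}W_{\pm}^{*}\psi\|_{W^{l,p'}}$$
using the $W^{l,p'}$-boundedness of $W_{\pm}$, and then the free dispersive estimate followed by the elementary mapping property $\|B_0^{-l}\,\cdot\,\|_{W^{l+s,p}}\lesssim \|\cdot\|_{W^{s,p}}$ and the $W^{s,p}$-boundedness of $W_{\pm}^{*}$ deliver
$$\|e^{\mathrm{i}Bt}B^{-l}\mathbf{P}_{c}\psi\|_{l,p'}\lesssim |t|^{-(2\pm\theta)(1/2-1/p')}\|\psi\|_{s,p},$$
with the sign of $t$ selecting which of $W_{+}$ or $W_{-}$ to use.

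The conceptual work has been outsourced to the two standard inputs above, so the main obstacle is purely one of index bookkeeping: one must verify that (V1) with $\delta>5$, together with the derivative control $|\partial^{\alpha}V|\lesssim \langle x\rangle^{-\delta}$ for $|\alpha|\le 2$ coming from the same hypothesis, is strong enough for Yajima's $W^{k,p}$-boundedness of $W_{\pm}$ in the range needed here ($k=l\in\{0,1\}$, $1<p\le 2$), and that the sharp regularity loss $s=(4+\theta)(1/2-1/p')$ in the free dispersive bound is attained in three dimensions. The former is comfortably within the scope of the cited works under $\delta>5$, and the latter follows from a standard Littlewood--Paley decomposition of the phase $\sqrt{1+|\xi|^{2}}$ combined with stationary and non-stationary phase. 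No new analytical input beyond these classical ingredients should be necessary.
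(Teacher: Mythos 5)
Your proposal follows exactly the route the paper intends: the text preceding Lemma \ref{Lp-Lp'} explicitly says the estimate for $V\not\equiv 0$ is "obtained by the $W^{k,p}$-boundedness of the wave operator after being projected on the continuous spectrum of $B$," citing \cite{JSS,RS,Yajima}, which is precisely the intertwining $e^{\mathrm{i}Bt}B^{-l}\mathbf{P}_c = W_\pm e^{\mathrm{i}B_0 t}B_0^{-l}W_\pm^{*}$ plus the classical free Klein--Gordon dispersive estimates you invoke. The reduction, the choice of the Brenner/Marshall--Strauss--Wainger free-space input, and the derivative bookkeeping via $B_0^{-l}$ all match the paper's (unwritten) argument, so this is the same proof.
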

	
	The following weighted decay estimate of the Klein-Gordon equation was first established by Jensen and Kato \cite{JS} for the Schr\"odinger equation and then extended to the Klein-Gordon equation by Komech and Kopylova \cite{KK}. These estimates are used to reveal the non-resonance structure of nonlinearities in the equation satisfied by the continuous spectrum part.
	\begin{lem}[Weighted $L^2$ estimate]\label{Weighted-L2} Assume $V(x)$ satisfies the hypothesis of Lemma \ref{Lp-Lp'}. Then, for $\sigma>\frac 52$, $l=0,1$, we have
		\begin{equation*} 
			\left\|\langle x\rangle^{-\sigma} \mathrm{e}^{\mathrm{i} B t}B^{-l} \mathbf{P}_{\mathrm{c}}\langle x\rangle^{-\sigma} \psi\right\|_{l,2} \lesssim \langle t\rangle ^{-\frac32}\|\psi\|_{2}.
		\end{equation*}		
	\end{lem}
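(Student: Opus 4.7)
The plan is to follow the Jensen--Kato spectral expansion method, as adapted to the Klein--Gordon setting by Komech--Kopylova \cite{KK}. By Stone's formula applied to $B$, one has
\begin{equation*}
e^{\mathrm{i}Bt} B^{-l} \mathbf{P}_c \psi = \frac{1}{\pi}\int_m^{\infty} e^{\mathrm{i}\lambda t}\,\lambda^{-l}\,\mathrm{Im}\,R_B(\lambda+\mathrm{i}0)\,\psi\,d\lambda,
\end{equation*}
where $R_B(z)=(B-z)^{-1}$. Via the identity $R_H(\mu)=\frac{1}{2\sqrt{\mu}}\bigl[R_B(\sqrt{\mu})-R_B(-\sqrt{\mu})\bigr]$ with $H=B^2=-\Delta+V+m^2$, control of the Klein--Gordon resolvent is transferred to control of the Schrödinger resolvent $R_H(\mu)$ at spectral parameter $\mu=\lambda^2\in[m^2,\infty)$.

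First I would invoke the limiting absorption principle: under (V1) and (V2), the weighted resolvent $\langle x\rangle^{-\sigma} R_H(\mu+\mathrm{i}0)\langle x\rangle^{-\sigma}$ is bounded on $L^2$, continuous in $\mu\geq m^2$, and, crucially, admits a regular expansion near the threshold $\mu = m^2$ of the form
\begin{equation*}
\langle x\rangle^{-\sigma} R_H(\mu+\mathrm{i}0) \langle x\rangle^{-\sigma} = A_0 + \kappa\, A_1 + \kappa^2\, A_2 + O(\kappa^{3}), \qquad \kappa=\sqrt{\mu-m^2},
\end{equation*}
in the uniform operator topology on $L^2$. The absence of a zero resonance for $-\Delta+V$, assumption (V2), is exactly what prevents the leading coefficient from being singular at the threshold; the weight condition $\sigma>\frac{5}{2}$ provides the Jensen--Kato regularity needed to differentiate the kernel twice in $\kappa$.

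Next I would split the spectral integral using a smooth cutoff into a low-frequency piece $\lambda\in[m,m+1]$ and a high-frequency piece $\lambda\geq m+1$. On the high-frequency piece, repeated integration by parts in $\lambda$, together with operator bounds on $\partial_\lambda^{k}\,\langle x\rangle^{-\sigma}\,\mathrm{Im}\,R_B(\lambda+\mathrm{i}0)\,\langle x\rangle^{-\sigma}$ obtained from the weighted resolvent estimates (the decay assumption (V1) guarantees the required integrability in $\lambda$), produces decay faster than $\langle t\rangle^{-3/2}$. On the low-frequency piece I would substitute $\lambda = \sqrt{m^2+\rho^2}$, so that the integral takes the form $e^{\mathrm{i}mt}\int_0^{\infty} e^{\mathrm{i}\rho^2 t/(2m)}\,b(\rho)\,d\rho$ with a smooth, compactly supported operator-valued amplitude $b$ coming from the threshold expansion; stationary phase at $\rho=0$ then yields the sharp rate $\langle t\rangle^{-3/2}$. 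The prefactor $B^{-l}$ is harmless since $B\geq m>0$ on $\mathrm{Ran}\,\mathbf{P}_c$, and for $l=1$ the $H^1$-norm on the left is controlled by the $L^2$-boundedness of $\nabla B^{-1}$ together with the pointwise bound $|[\nabla,\langle x\rangle^{-\sigma}]|\lesssim \langle x\rangle^{-\sigma-1}$, reducing the $l=1$ case to the $l=0$ case.

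The main obstacle is establishing the threshold expansion of the weighted resolvent with enough regularity, since this is what fixes the sharp decay exponent $-3/2$. This is the crux of Jensen--Kato \cite{JS} for the Schrödinger resolvent, and its transfer to the Klein--Gordon resolvent via the factorization above is carried out in \cite{KK}. Once the expansion is secured and the low/high-frequency splitting is in place, the remaining oscillatory integral analysis reduces to a routine application of stationary phase and integration by parts.
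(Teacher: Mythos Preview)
The paper does not actually prove this lemma: it states it as a known result, attributing the Schr\"odinger case to Jensen--Kato \cite{JS} and the Klein--Gordon extension to Komech--Kopylova \cite{KK}, and then moves on. Your proposal is a faithful outline of the argument carried out in those references---Stone's formula, the resolvent factorization linking $R_B$ to $R_H$, the Jensen--Kato threshold expansion under the non-resonance hypothesis (V2) with weights $\sigma>5/2$, and the low/high-frequency split handled by stationary phase and integration by parts respectively---so there is no discrepancy to report. If anything, your sketch supplies more detail than the paper itself, which treats the lemma as a black box from the cited literature.
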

	The following weighted $L^p$ estimates, which are consequences of the interpolation between the standard $L^\infty-L^{1}$ estimates and the weighted $L^2$ estimate, are also necessary:
	\begin{lem}[Weighted $L^p$ estimate]\label{Weighted-Lp-Lp'}
		Assume that $V(x)$ satisfies the hypothesis of Lemma \ref{Lp-Lp'}. Let $1<p\le 2$, $\frac 1p+\frac{1}{p'}=1$, $0\le \theta \le 1$, $l=0,1$, and $s= (4+\theta)(\frac 12-\frac{1}{p'})$. Then, for $\sigma>\frac 52$,
		\begin{equation*} 
			\left\|\langle x\rangle^{-\sigma} \mathrm{e}^{\mathrm{i} B t}B^{-l} \mathbf{P}_{\mathrm{c}}\langle x\rangle^{-\sigma} \psi\right\|_{l,p'} \lesssim |t|^{-(2+ \theta)(\frac12-\frac{1}{p'})-\frac{3}{p'}}\|\psi\|_{s,p}, \quad  |t|\ge 1
		\end{equation*}
		and
		\begin{equation*}  
			\left\|\langle x\rangle^{-\sigma} \mathrm{e}^{\mathrm{i} B t}B^{-l} \mathbf{P}_{\mathrm{c}}\langle x\rangle^{-\sigma} \psi\right\|_{l,p'} \lesssim |t|^{-(2- \theta)(\frac12-\frac{1}{p'})}\|\psi\|_{s,p}, \quad 0<|t|\le 1
		\end{equation*}
		hold.
	\end{lem}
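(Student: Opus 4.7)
The plan is to obtain the weighted $L^{p'}$ bound by complex interpolation between two known endpoints: the unweighted $L^\infty$--$L^1$ dispersive estimate of Lemma \ref{Lp-Lp'} (which costs $|t|^{-(2\pm\theta)/2}$ and the Sobolev regularity $s_0:=(4+\theta)/2$) and the weighted $L^2$--$L^2$ estimate of Lemma \ref{Weighted-L2} (which costs $\langle t\rangle^{-3/2}$ and no Sobolev regularity). Because the weights $\langle x\rangle^{-\sigma}$ on both sides of the propagator are multiplications by smooth bounded functions, they do not affect the unweighted endpoint.

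More precisely, I would introduce the $t$-dependent family of operators
\begin{equation*}
T_t\psi := \langle x\rangle^{-\sigma} e^{\mathrm{i}Bt} B^{-l}\mathbf{P}_c \langle x\rangle^{-\sigma}\psi,
\end{equation*}
and establish the two endpoint mapping properties (treating $|t|\ge 1$ and $0<|t|\le 1$ separately). First, since $\langle x\rangle^{-\sigma}$ and all its derivatives up to order $s_0$ are bounded, multiplication by $\langle x\rangle^{-\sigma}$ is a bounded map on $W^{s_0,1}$ and on $W^{l,\infty}$; together with Lemma \ref{Lp-Lp'} applied to $\psi\mapsto \langle x\rangle^{-\sigma}\psi$ this yields
\begin{equation*}
\|T_t\|_{W^{s_0,1}\to W^{l,\infty}} \lesssim |t|^{-(2+\theta)/2}\quad (|t|\ge 1), \qquad \|T_t\|_{W^{s_0,1}\to W^{l,\infty}} \lesssim |t|^{-(2-\theta)/2}\quad (0<|t|\le 1).
\end{equation*}
Second, Lemma \ref{Weighted-L2} directly gives
\begin{equation*}
\|T_t\|_{L^2\to W^{l,2}} \lesssim \langle t\rangle^{-3/2}.
\end{equation*}

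Now apply Stein's complex interpolation (for each fixed $t$, and using an analytic family obtained by inserting the fractional multipliers $\langle D\rangle^{zs_0}$ and $\langle D\rangle^{-zl}+(1-z)\langle D\rangle^{-l}$ to match the Sobolev indices) with interpolation parameter $\alpha\in[0,1]$ corresponding to the $L^2$ endpoint. The standard computation gives $1/p'=\alpha/2$, i.e.\ $\alpha = 2/p'$, and therefore the interpolated source Sobolev order is $(1-\alpha)s_0 = (1-2/p')(4+\theta)/2 = (4+\theta)(1/2-1/p')$, which is exactly the $s$ appearing in the statement. The interpolated target order is $l$ on both ends, and the time-decay exponent becomes
\begin{equation*}
-(1-\alpha)\tfrac{2+\theta}{2} - \alpha\cdot\tfrac{3}{2} = -(2+\theta)\left(\tfrac12-\tfrac{1}{p'}\right) - \tfrac{3}{p'} \qquad (|t|\ge 1),
\end{equation*}
and $-(1-\alpha)(2-\theta)/2 = -(2-\theta)(1/2-1/p')$ for $0<|t|\le 1$ (the $L^2$ endpoint contributes $\langle t\rangle^{-3/2}=O(1)$). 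These are exactly the two stated bounds.

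The routine but only mildly delicate step is setting up the analytic family so that the Sobolev norms on both ends interpolate correctly; one standard device is to work with $(-\Delta+1)^{z s_0/2}\langle x\rangle^{-\sigma} e^{\mathrm{i}Bt}B^{-l}\mathbf{P}_c \langle x\rangle^{-\sigma}(-\Delta+1)^{-z s_0/2}$ on the strip $0\le \mathrm{Re}\,z\le 1$, verifying the required admissible growth on the vertical lines via the $W^{k,p}$-boundedness of the wave operators already invoked for Lemma \ref{Lp-Lp'}. No other obstacle appears, because the assumption $\sigma>5/2$ is only used through Lemma \ref{Weighted-L2}, and the hypothesis on $V$ inherited from Lemma \ref{Lp-Lp'} is sufficient to propagate the Sobolev regularity through $e^{\mathrm{i}Bt}B^{-l}\mathbf{P}_c$.
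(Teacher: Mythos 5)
Your proposal is correct and follows essentially the same route the paper intends: the paper gives no detailed proof for this lemma, only the remark that it is a consequence of interpolation between the $L^\infty$--$L^1$ estimate of Lemma \ref{Lp-Lp'} and the weighted $L^2$ estimate of Lemma \ref{Weighted-L2}. Your arithmetic ($\alpha = 2/p'$, Sobolev order $(1-\alpha)s_0 = (4+\theta)(1/2-1/p')$, time exponent $(2+\theta)(1/2-1/p')+3/p'$ for $|t|\ge 1$ and $(2-\theta)(1/2-1/p')$ for $0<|t|\le 1$) reproduces the stated bounds exactly, and the observation that the spatial weight is harmless at the $L^1\to L^\infty$ endpoint is the right point to make. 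One small imprecision: the analytic family you write, $(-\Delta+1)^{zs_0/2}T_t(-\Delta+1)^{-zs_0/2}$, equals $T_t$ at $z=0$, which is a $W^{s_0,1}\to W^{l,\infty}$ bound rather than an $L^1\to L^\infty$ bound; the standard device is instead to work with $\langle D\rangle^{l}T_t\langle D\rangle^{-(1-z)s_0}$ (or similar), so that $z=0$ yields $L^1\to L^\infty$ and $z=1$ yields $L^2\to L^2$, and then undo the Sobolev shifts at $z=\alpha$. Since you explicitly flag this setup as the delicate step, and the paper provides no more detail than a one-line remark, this is a minor adjustment rather than a gap in the argument.
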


	
	The use of the weighted $L^p$ estimates enables us to obtain a better time decay of the continuous spectrum part of $u$ from its spatial decay, which aids dealing with the asymptotic behavior of solutions to the nonlinear Klein-Gordon equations. In particular, they help us to overcome the loss of the derivative of $\eta_3$. For more details, see  Section 7.

	\subsection{Singular Resolvents and Time Decay}
	The following local decay estimates for singular resolvents $\mathrm{e}^{\mathrm{i} B t}(B-\Lambda+\mathrm{i} 0)^{-l}$,  which was proved in \cite{SW1999}, are also significant. Here, $\Lambda$ is a point in the interior of the continuous spectrum of $B(\Lambda>m)$.
	\begin{lem}[Decay estimates for singular resolvents]\label{lem-singular-resolvents}
		Assume that $V(x)$ is a real-valued function and satisfies (V1)-(V3). Let $\sigma>16/5$. Then for any point $\Lambda>m$ in the continuous spectrum of $B$, we have for $l=1,2:$ 
		\begin{align*}
			&\left\|\langle x\rangle^{-\sigma} \mathrm{e}^{\mathrm{i} B t}(B-\Lambda+\mathrm{i} 0)^{-l} \mathbf{P}_{\mathrm{c}}\langle x\rangle^{-\sigma} \psi\right\|_{2} \lesssim \langle t\rangle^{-\frac{6}{5}}\|\psi\|_{1,2}, \quad t>0,\\	
			&\left\|\langle x\rangle^{-\sigma} \mathrm{e}^{\mathrm{i} B t}(B-\Lambda-\mathrm{i} 0)^{-l} \mathbf{P}_{\mathrm{c}}\langle x\rangle^{-\sigma} \psi\right\|_{2} \lesssim \langle t\rangle^{-\frac{6}{5}}\|\psi\|_{1,2}, \quad t<0.
		\end{align*}
	\end{lem}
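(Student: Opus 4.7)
The plan is to reduce the operator $e^{\mathrm{i}Bt}(B-\Lambda\pm\mathrm{i}0)^{-l}\mathbf{P}_c$ to a one-dimensional oscillatory integral with a pole on the contour, and then combine Hölder regularity of the weighted spectral density (via the limiting absorption principle for $H=B^2$) with stationary-phase bookkeeping. Via the functional calculus for $B=\sqrt{H}$ I would represent, on the continuous subspace,
\begin{align*}
\langle\phi,\,e^{\mathrm{i}Bt}(B-\Lambda+\mathrm{i}0)^{-l}\mathbf{P}_c\psi\rangle
=\int_m^{\infty} e^{\mathrm{i}\mu t}(\mu-\Lambda+\mathrm{i}0)^{-l}\,a_{\phi,\psi}(\mu)\,d\mu,\qquad
a_{\phi,\psi}(\mu):=\tfrac{2\mu}{\pi}\,\mathrm{Im}\langle\phi,R_H(\mu^2+\mathrm{i}0)\psi\rangle,
\end{align*}
where $R_H(z)=(H-z)^{-1}$. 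Taking $\phi=\langle x\rangle^{-\sigma}\phi_0$, $\psi=\langle x\rangle^{-\sigma}\psi_0$ with $\phi_0,\psi_0\in L^2$, assumptions (V1)-(V2) together with Agmon-Kato-Kuroda theory guarantee that $a$ is continuous on $(m,\infty)$, vanishes like $(\mu-m)^{1/2}$ at threshold, and decays rapidly at infinity (in the weighted sense). The negative-time case is handled identically using the complex-conjugate limiting value.

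I would then use a smooth partition of unity to split the integral into three pieces: a near-threshold contribution on $[m,m+\delta]$, a near-resonance contribution on $|\mu-\Lambda|<\delta$, and a far piece. On the far piece the integrand is smooth, so repeated integration by parts against $(\mathrm{i}t)^{-1}\partial_\mu e^{\mathrm{i}\mu t}$ gives $O(t^{-N})$ for any $N$. Near $\mu=m$, the square-root behaviour and a one-variable van der Corput estimate yield decay of order $t^{-3/2}$, which is better than required. The only delicate region is the neighbourhood of $\Lambda$.

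Around $\Lambda$ the factor $(\mu-\Lambda+\mathrm{i}0)^{-l}$ must be treated distributionally. I would split $a(\mu)=a(\Lambda)+(a(\mu)-a(\Lambda))$ when $l=1$, and Taylor-expand one order higher when $l=2$. The singular Taylor polynomial multiplied by a smooth cutoff is explicitly the Fourier transform of a compactly supported distribution, hence gives arbitrary polynomial decay in $t$. The remainder $\bigl(a(\mu)-a(\Lambda)\bigr)/(\mu-\Lambda)$ (or its higher-order analogue) inherits integrability from Hölder continuity of $a$: assuming $a\in C^{0,\alpha}$ near $\Lambda$ and splitting $|\mu-\Lambda|\lessgtr t^{-1}$, the non-oscillatory regime contributes $O(t^{-\alpha})$ while the oscillatory regime gains an extra $t^{-1}$ after a single integration by parts, producing the bound $O(t^{-(1+\alpha)})$. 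The announced rate $t^{-6/5}$ corresponds to $\alpha=1/5$.

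\textbf{Main obstacle.} The real work therefore is to upgrade the mere continuity of the weighted spectral density to Hölder regularity with exponent $>1/5$, and this is precisely what assumption (V3), together with the weight $\sigma>16/5$, is designed to deliver. I would use the commutator $[H,x\cdot\nabla]=-2\Delta-(x\cdot\nabla)V$ to run a Mourre-type argument: the hypothesis that $(-\Delta+1)^{-1}\bigl((x\cdot\nabla)^l V\bigr)(-\Delta+1)^{-1}$ is bounded on $L^2$ for $|l|\le N_*$ is exactly what allows one to differentiate the resolvent boundary value $\langle x\rangle^{-\sigma}R_H(\mu^2+\mathrm{i}0)\langle x\rangle^{-\sigma}$ in $\mu$ and control the derivatives in the sandwiched operator norm, with modulus of continuity quantified by the weight exponent $\sigma$. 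Once $a\in C^{1,1/5}$ near $\Lambda$ is established, combining the three regional bounds above and passing from the sesquilinear pairing to the operator norm estimate by density yields the $\langle t\rangle^{-6/5}$ bound stated in the lemma.
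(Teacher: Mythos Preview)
The paper itself does not prove this lemma: it is stated in Section~\ref{Sec-Preliminary} with the remark ``which was proved in \cite{SW1999}'' and no further argument, so there is no in-paper proof to match. Your outline is a legitimate route to such an estimate, and the threshold and far-field regions are handled correctly; the gap is in the bookkeeping near $\Lambda$.

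You assert that from $a\in C^{0,\alpha}$ the split $|\mu-\Lambda|\lessgtr t^{-1}$ gives $O(t^{-(1+\alpha)})$, but your own inner-region bound is already only $O(t^{-\alpha})$ and dominates; moreover with $a\in C^{0,\alpha}$ the quotient $(a(\mu)-a(\Lambda))/(\mu-\Lambda)$ is of size $|\mu-\Lambda|^{\alpha-1}$, hence unbounded, so the outer integration by parts is not available as stated. A clean way to see the correct count is, for $t>0$ and $g=a\chi$ compactly supported,
\[
\int e^{i\mu t}(\mu-\Lambda+i0)^{-l}g(\mu)\,d\mu
=\frac{(-i)^{l}e^{i\Lambda t}}{(l-1)!}\int_t^{\infty}(\tau-t)^{l-1}e^{-i\Lambda\tau}\,\widehat{g}(\tau)\,d\tau,
\]
which shows that the rate $\langle t\rangle^{-6/5}$ requires $|\widehat{g}(\tau)|\lesssim\langle\tau\rangle^{-11/5}$ for $l=1$ and $\lesssim\langle\tau\rangle^{-16/5}$ for $l=2$; equivalently $a\in C^{2,1/5}$ and $a\in C^{3,1/5}$ near $\Lambda$, not $C^{0,1/5}$ or $C^{1,1/5}$. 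This is exactly why the weight threshold is $\sigma>16/5$ and why the multi-commutator hypothesis (V3) with $N_*\ge10$ is imposed: several $\mu$-derivatives of the weighted boundary value $\langle x\rangle^{-\sigma}R_H(\mu^2+i0)\langle x\rangle^{-\sigma}$ must be controlled, each costing additional spatial decay. So your identification of the mechanism is right, but the regularity target you set for $a$ is about two orders too low for $l=1$ and three for $l=2$, and the argument as written does not close. The proof in \cite{SW1999} that the paper cites organizes the estimate somewhat differently, through a high/low energy cutoff and Mourre-type propagation estimates rather than a pointwise spectral-density argument; if you want to carry your approach through, the missing step is a genuine $C^{3,1/5}$ bound on the weighted spectral measure near $\Lambda$.
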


	\subsection{Global Well-Posedness and Energy Conservation}
	The global well-posedness of \eqref{NLKG} with small initial data is well-known. 
	\begin{thm}
		Assume $V \in L^{p}$ with $p>3 / 2$. Then, there exists $\varepsilon_{0}>0$ and $C>0$, such that for any $\left\|\left(u_{0}, u_{1}\right)\right\|_{H^{1} \times L^{2}} \leq \epsilon<\varepsilon_{0}$, equation \eqref{NLKG} admits exactly one solution
		$u \in C^{0}\left(\mathbb{R}; H^{1}\right) \cap C^{1}\left(\mathbb{R}; L^{2}\right)$
		such that $(u(0), \partial_{t}u(0))=\left(u_{0}, u_{1}\right)$. Furthermore, the map $\left(u_{0}, u_{1}\right) \mapsto (u(t), \partial_{t}u(t))$ is continuous from the ball $\left\|\left(u_{0}, u_{1}\right)\right\|_{H^{1} \times L^{2}}<\varepsilon_{0}$ to $C^{0}\left(I; H^{1}\right) \times C^{0}\left(I; L^{2}\right)$ for any bounded interval $I$. Moreover, the energy 
		\begin{align*}
			\mathcal{E}[u,\partial_{t} u]\equiv \frac12\int (\partial_t u)^2 + |\nabla u|^2 + m^2 u^2 + V(x)u^2 dx - \frac{\lambda}{4}  \int u^4 dx.
		\end{align*}
		is conserved and
		$$
		\|(u(t), v(t))\|_{H^{1} \times L^{2}} \leq C\left\|\left(u_{0}, v_{0}\right)\right\|_{H^{1} \times L^{2}} .
		$$
	\end{thm}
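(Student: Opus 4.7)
The plan is the standard two-step strategy: local well-posedness by Picard iteration in $C^{0}([0,T];H^{1})\cap C^{1}([0,T];L^{2})$, followed by global continuation via the conserved energy. First I would rewrite \eqref{NLKG} in Duhamel form using the free Klein--Gordon propagator $B_{0}^{2}=-\Delta+m^{2}$, treating both $Vu$ and $\lambda u^{3}$ as forcing. In $\bR^{3}$ Sobolev embedding $H^{1}\hookrightarrow L^{6}$ makes $u\mapsto u^{3}$ locally Lipschitz from $H^{1}$ into $L^{2}$, while $V\in L^{p}$ with $p>3/2$ and H\"older give $\|Vu\|_{L^{2}}\lesssim\|V\|_{L^{p}}\|u\|_{L^{q}}$ for a suitable exponent $q\in[2,6]$, so $u\mapsto Vu$ is also locally Lipschitz from $H^{1}$ to $L^{2}$. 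A contraction argument on a short interval whose length depends only on $\|(u_{0},u_{1})\|_{H^{1}\times L^{2}}$ then yields local existence, uniqueness, and continuous dependence on the data.

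To upgrade to a global statement I would justify the identity $\tfrac{d}{dt}\mathcal{E}[u,\partial_{t}u]=0$ for $H^{1}\times L^{2}$ solutions by the usual mollification argument, testing against $\partial_{t}u$. The functional $\mathcal{E}$ is well-defined on $H^{1}\times L^{2}$: the term $\int Vu^{2}$ is controlled by the same H\"older estimate as above, and $\int u^{4}$ is finite since $H^{1}\hookrightarrow L^{4}$. For data of size at most $\varepsilon_{0}$ sufficiently small, the quadratic part $\tfrac12\|\partial_{t}u\|_{L^{2}}^{2}+\tfrac12\langle(-\Delta+V+m^{2})u,u\rangle$ is equivalent to $\|\partial_{t}u\|_{L^{2}}^{2}+\|u\|_{H^{1}}^{2}$, while the quartic correction $-\tfrac{\lambda}{4}\int u^{4}$ is of lower order and can be absorbed since $\|u\|_{L^{4}}^{4}\lesssim\|u\|_{H^{1}}^{4}\ll\|u\|_{H^{1}}^{2}$. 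Consequently
\begin{equation*}
\|u(t)\|_{H^{1}}^{2}+\|\partial_{t}u(t)\|_{L^{2}}^{2}\le C\,\mathcal{E}[u_{0},u_{1}]\le C\|(u_{0},u_{1})\|_{H^{1}\times L^{2}}^{2},
\end{equation*}
which rules out blow-up; iterating the local theory then produces a global solution satisfying the stated bound, and continuous dependence on any bounded interval $I$ follows from the same energy identity applied to the difference of two solutions combined with Gr\"onwall.

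The only genuine obstacle is the coercivity step: one needs the quadratic form $u\mapsto\int|\nabla u|^{2}+m^{2}u^{2}+Vu^{2}\,dx$ to be comparable to $\|u\|_{H^{1}}^{2}$. This follows because $V\in L^{p}$ with $p>3/2$ is infinitesimally form-bounded relative to $-\Delta$ in three dimensions (a KLMN-type criterion), so $-\Delta+V+m^{2}$ defines a self-adjoint operator bounded below; after a harmless shift by a constant to absorb the possibly negative discrete spectrum, the quadratic form is coercive on $H^{1}$, and the conservation law transfers directly into the uniform $H^{1}\times L^{2}$ bound asserted in the theorem.
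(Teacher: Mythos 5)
The paper offers no proof of this theorem (it simply cites Cazenave--Haraux), so there is nothing to compare against line by line; your overall strategy --- Duhamel, contraction, energy conservation, coercivity --- is the standard and correct one. However, there are two concrete gaps in your execution.

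First, the estimate $\|Vu\|_{L^{2}}\lesssim\|V\|_{L^{p}}\|u\|_{L^{q}}$ with ``a suitable exponent $q\in[2,6]$'' is not available for the full range $p>3/2$. H\"older forces $\tfrac1q=\tfrac12-\tfrac1p$, and $q\le6$ is equivalent to $p\ge3$; for $2<p<3$ one gets $q>6$, beyond the reach of $H^{1}\hookrightarrow L^{6}$ in three dimensions, and for $3/2<p\le2$ H\"older cannot even be applied in this form. So treating $Vu$ as a source term and closing a contraction in $C^{0}([0,T];H^{1})$ via this bound fails for $3/2<p<3$. The standard remedy (which is what makes the hypothesis $p>3/2$ the natural one) is \emph{not} to treat $Vu$ as forcing: since $V\in L^{p}$, $p>3/2$, decomposes as $V=V_{1}+V_{2}$ with $\|V_{1}\|_{L^{3/2}}$ small and $V_{2}\in L^{\infty}$, the Sobolev inequality $\|u\|_{L^{6}}\lesssim\|\nabla u\|_{L^{2}}$ shows $V$ is infinitesimally form-bounded relative to $-\Delta$; by KLMN the quadratic form $\int|\nabla u|^{2}+(V+m^{2})u^{2}$ defines a self-adjoint operator $A$ with form domain exactly $H^{1}$, and the associated cosine/sine propagators preserve $H^{1}\times L^{2}$. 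One then only needs the Lipschitz estimate for $u\mapsto u^{3}$ from $H^{1}$ to $L^{2}$, which does hold via $H^{1}\hookrightarrow L^{6}$. You invoke this form-boundedness later, but only for coercivity; it has to be used already at the local theory stage.

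Second, the ``harmless shift by a constant'' to absorb negative spectrum does not yield the uniform-in-time bound as written. If $A$ has negative spectrum, then the conserved quantity $\mathcal{E}$ is not coercive, and adding $c\|u\|_{L^{2}}^{2}$ to restore coercivity destroys conservation since $\|u(t)\|_{L^{2}}$ is not a priori controlled; indeed, the linear flow is then exponentially unstable and no bound of the stated form can hold. What actually makes the argument work in this paper is Assumption (V4), which posits that $B^{2}=-\Delta+V+m^{2}$ is strictly positive with a simple eigenvalue $\omega^{2}>0$ and continuous spectrum $[m^{2},\infty)$. Under (V4) the quadratic part of $\mathcal{E}$ is directly equivalent to $\|u\|_{H^{1}}^{2}+\|\partial_{t}u\|_{L^{2}}^{2}$, no shift is needed, and the small-data absorption of the quartic term then gives the stated global bound exactly as you outline.

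Both issues are repairable and the proposal is otherwise sound, but in its current form the local well-posedness step is incorrect for $3/2<p<3$, and the coercivity step implicitly needs a spectral hypothesis you did not invoke.
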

	We refer to \cite{CH} for details.

	\section{Normal Form Transformation}\label{Sec-normal form transformation}
	In this section, we present a new normal form transformation which is a refined version of Theorem 4.9 in \cite{BC}. The main differences are as follows: (i) we find that the order of normal form actually increase by two in each step, which is consistent with the result in \cite{SW1999}; (ii) we give explicit forms of these coefficients appeared in error terms, whose structure will be crucial in the subsequent error estimates.
	\subsection{Hamiltonian Structure}	
	Now we consider 3D nonlinear Klein Gordon equation (NLKG)
	\begin{align}
		u_{t t}-\Delta u+V u+m^{2} u =  u^3, \quad(t, x) \in \mathbb{R} \times \mathbb{R}^{3}, 
	\end{align}
	which is an Hamiltonian perturbation of linear Klein-Gordon equation with potential. More precisely, in $H^{1}\left(\mathbb{R}^{3}, \mathbb{R}\right) \times L^{2}\left(\mathbb{R}^{3}, \mathbb{R}\right)$ endowed with the standard symplectic form, namely
	$$
	\Omega\left(\left(u_{1}, v_{1}\right) ;\left(u_{2}, v_{2}\right)\right):=\left\langle u_{1}, v_{2}\right\rangle_{L^{2}}-\left\langle u_{2}, v_{1}\right\rangle_{L^{2}}
	$$
	we consider the Hamiltonian
	$$
	\begin{aligned}
		H &=H_{L}+H_{P}, \\
		H_{L} &:=\int_{\mathbb{R}^{3}} \frac{1}{2}\left(v^{2}+|\nabla u|^{2}+V u^{2}+m^{2} u^{2}\right) d x, \\
		H_{P} &:= \int_{\mathbb{R}^{3}} -\frac{1}{4} u^4  d x.
	\end{aligned}
	$$ 
	The corresponding Hamilton equations are $\dot{v}=-\nabla_{u} H, \dot{u}=\nabla_{v} H$, where $\nabla_{u} H$ is the gradient with respect to the $L^{2}$ metric, explicitly defined by
	$$
	\left\langle\nabla_{u} H(u), h\right\rangle=d_{u} H(u) h, \quad \forall h \in H^{1}
	$$
	and $d_{u} H(u)$ is the Frechét derivative of $H$ with respect to $u$. It is easy to see that the Hamilton equations are explicitly given by
	\begin{align*}
		\left(\dot{v}=\Delta u-V u-m^{2} u +  u^3, \dot{u}=v\right) \Longleftrightarrow \ddot{u}=\Delta u-V u-m^{2} u +  u^3.
	\end{align*}
	Write $$u= q \varphi + P_c u, \quad v= p  \varphi + P_c v,$$
	with a slightly abuse of notations, from now on we denote
	$$
	B:=P_{c}\left(-\Delta+V+m^{2}\right)^{1 / 2} P_{c},
	$$
	and define the complex variables
	\begin{equation}
		\xi:=\frac{q \sqrt{\omega}+\mathrm{i} \frac{p}{\sqrt{\omega}}}{\sqrt{2}}, \quad f:=\frac{B^{1 / 2} P_{c} u+\mathrm{i} B^{-1 / 2} P_{c} v}{\sqrt{2}}.
	\end{equation}
	Then, in terms of these variables the symplectic form has the form
	$$
	\begin{array}{r}
		\Omega\left(\left(\xi^{(1)}, f^{(1)}\right) ;\left(\xi^{(2)}, f^{(2)}\right)\right)=2 \operatorname{Re}\left[\mathrm{i}\left( \xi^{(1)} \bar{\xi}^{(2)}+\left\langle f^{(1)}, \bar{f}^{(2)}\right\rangle\right)\right] \\
		=-\mathrm{i} \sum_{j}\left(\bar{\xi}^{(1)} \xi^{(2)}-\xi^{(1)} \bar{\xi}^{(2)}\right)-\mathrm{i}\left(\left\langle f^{(2)}, \bar{f}^{(1)}\right\rangle-\left\langle f^{(1)}, \bar{f}^{(2)}\right\rangle\right)
	\end{array}
	$$
	and the Hamilton equations take the form
	$$
	\dot{\xi}=-\mathrm{i} \frac{\partial H}{\partial \bar{\xi}}, \quad \dot{f}=-\mathrm{i} \nabla_{\bar{f}} H.
	$$
	where
	$$
	\begin{gathered}
		H_{L}= \omega\left|\xi\right|^{2}+\langle\bar{f}, B f\rangle, \\
		H_{P}(\xi, f)=\int_{\mathbb{R}^{3}} \left(\sum \frac{\xi+\bar{\xi}}{\sqrt{2 \omega}} \varphi(x)+U(x)\right)^4 d x
	\end{gathered}
	$$
	with $U=B^{-\frac{1}{2}}(f+\bar{f}) / \sqrt{2} \equiv P_{c} u$.
	The Hamiltonian vector field $X_{H}$ of a function is given by
	$$
	X_{H}(\xi, \bar{\xi}, f, \bar{f})=\left(-\mathrm{i} \frac{\partial H}{\partial \bar{\xi}}, \mathrm{i} \frac{\partial H}{\partial \xi},-\mathrm{i} \nabla_{\bar{f}} H, \mathrm{i} \nabla_{f} H\right).
	$$
	The associate Poisson bracket is given by
	$$
	\begin{aligned}
		\{H, K\}:=& \mathrm{i} \left(\frac{\partial H}{\partial \xi} \frac{\partial K}{\partial \bar{\xi}}-\frac{\partial H}{\partial \bar{\xi}} \frac{\partial K}{\partial \xi}\right) \\
		&+\mathrm{i}\left\langle\nabla_{f} H, \nabla_{\bar{f}} K\right\rangle-\mathrm{i}\left\langle\nabla_{\bar{f}} H, \nabla_{f} K\right\rangle .
	\end{aligned}
	$$
	Denote $z=(\xi,f), \mathbf{f}=(f,\bar{f}),$ and $\mathcal{P}^{k, s}=\mathbb{C} \times P_{c} H^{k, s}\left(\mathbb{R}^{3}, \mathbb{C}\right)$, where $$H^{k, s}\left(\mathbb{R}^{3}, \mathbb{C}\right)=\left\{f: \mathbb{R}^{3} \rightarrow \mathbb{C} \text { s.t. }\|f\|_{H^{s, k}}:=\left\|\langle x\rangle^{s}(-\Delta+1)^{k / 2} f\right\|_{L^{2}}<\infty\right\}.$$
	
	\subsection{Lie Transform}	
	Consider a function $\chi$ of the form
	\begin{align}\label{eq:chi}
		\chi(z) \equiv \chi(\xi, f)=\chi_0(\xi, \bar{\xi})+\sum_{|\mu|+|\nu|=M_0+1} \xi^\mu \overline{\xi^\nu} \int_{\mathbb{R}^3} \mathbf{\Phi}_{\mu \nu} \cdot \mathbf{f} d x
	\end{align}
	where $\mathbf{\Phi}_{\mu \nu} \cdot \mathbf{f}:=\Phi_{\mu \nu} f+\Psi_{\mu\nu} \bar{f}$ with $\Phi_{\mu \nu}, \Psi_{\mu \nu} \in \mathcal{S}\left(\mathbb{R}^3, \mathbb{C}\right)$ and where $\chi_0$ is a homogeneous polynomial of degree $M_0+2$. The Hamiltonian vector field satisfies $X_\chi \in C^{\infty}\left(\mathcal{P}^{-\kappa,-s}, \mathcal{P}^{k, \tau}\right)$ for any $k, \kappa, s, \tau \geq 0$. Moreover we have
	\begin{align}\label{eq:X-chi}
		\left\|X_\chi(z)\right\|_{\mathcal{P}^{k, \tau}} \leq C_{k, s, \kappa, \tau}\|z\|_{\mathcal{P}^{-\kappa,-s}}^{M_0+1} .
	\end{align}
	Since $X_\chi$ is a smooth polynomial it is also analytic. Denote by $\phi^t$ the flow generated by $X_\chi$. For fixed $\kappa, s,$ by \eqref{eq:X-chi} $\phi^t$ is well defined up to any fixed time $\bar{t}$, in a sufficiently small neighborhood $\mathcal{U}^{-\kappa,-s}\subset \mathcal{P}^{-\kappa,-s}$ of the origin. Set $\phi:=\left.\phi^1 \equiv \phi^t\right|_{t=1}$. The canonical transformation $\phi$ will be called the Lie transform generated by $\chi$.
	
	\begin{lem}\label{lem:z-expansion} Given a functional $\chi$ of the form \eqref{eq:chi}. Assume $\Phi_{\mu \nu}, \Psi_{\mu\nu} \in$ $\mathcal{S}\left(\mathbb{R}^3, \mathbb{C}\right)$ for all $\mu$ and $\nu$. Let $\phi$ be its Lie transform. Denote $z^{\prime}=\phi(z), z \equiv(\xi, f)$ and $z^{\prime} \equiv\left(\xi^{\prime}, f^{\prime}\right)$. Then, there exists a sufficiently small neighborhood $\mathcal{U}^{-\kappa,-s}\subset \mathcal{P}^{-\kappa,-s}$ of the origin, such that the following expansions hold:
		\begin{align}
			&\xi' = \xi + \sum_{k=1}^{\infty}\sum_{i=0}^{k}\sum_{\mu+\nu=kM_0+1-i}a_{i\mu \nu} \xi^{\mu}\bar{\xi}^{\nu}\prod_{j=1}^{i}\int \mathbf{\Phi}_{\mu\nu}^{ij}\cdot \mathbf{f}dx,\\
			&f'  = f+ \sum_{k=1}^{\infty}\sum_{i=0}^{k-1}\sum_{\mu+\nu=kM_0+1-i} b_{i\mu \nu}\xi^{\mu}\bar{\xi}^{\nu}\prod_{j=1}^{i}\int \mathbf{\Lambda}_{\mu\nu}^{ij}\cdot \mathbf{f}dx \Psi_{\mu \nu}^{i}.
		\end{align}
		where $a_{i\mu \nu}$ and $b_{i\mu \nu}$ are constants and $\mathbf{\Phi}_{\mu\nu}^{ij}, \mathbf{\Lambda}_{\mu\nu}^{ij}, \Psi_{\mu \nu}^{i}\in \mathcal{S}\left(\mathbb{R}^{3}, \mathbb{C}\right).$
	\end{lem}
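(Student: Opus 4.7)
The plan is to represent the Lie transform $\phi=\phi^1$ as the time-$1$ flow of the Hamilton ODE
\begin{align*}
\dot{\xi} &= -\mathrm{i}\,\partial_{\bar\xi}\chi_0(\xi,\bar\xi) - \mathrm{i}\sum_{|\mu|+|\nu|=M_0+1}\partial_{\bar\xi}(\xi^\mu\bar\xi^\nu)\int(\Phi_{\mu\nu}f+\Psi_{\mu\nu}\bar f)\,dx,\\
\dot{f} &= -\mathrm{i}\sum_{|\mu|+|\nu|=M_0+1}\xi^\mu\bar\xi^\nu\,\Psi_{\mu\nu},
\end{align*}
(together with the conjugate equations), solved by Picard iteration $z^{(n+1)}(t)=z+\int_0^t X_\chi(z^{(n)}(s))\,ds$ starting from $z^{(0)}(t)=z$. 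The estimate \eqref{eq:X-chi} combined with the fact that $\Phi_{\mu\nu},\Psi_{\mu\nu}\in\mathcal{S}(\mathbb{R}^3,\mathbb{C})$ ensures that this iteration converges uniformly on a small neighborhood $\mathcal{U}^{-\kappa,-s}\subset\mathcal{P}^{-\kappa,-s}$ and identifies the limit with $\phi^t(z)$, thus making $\phi=\phi^1$ analytic in $z$.

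The substantive content is algebraic: I would prove by induction on $n$ that the $n$-th Picard iterate $(\xi^{(n)},f^{(n)})$ admits an expansion of exactly the claimed form, with the outer sums truncated at $k=n$. Two structural observations drive the induction. First, $\dot\xi$ is a polynomial of homogeneous degree $M_0+1$ in the variables $(\xi,\bar\xi,\int\mathbf{\Phi}_{\mu\nu}\cdot\mathbf{f}\,dx)$, so one integration in $t$ increases by exactly $M_0$ the total polynomial degree; this produces the index $|\mu|+|\nu|=kM_0+1-i$. Second, $\dot f$ carries \emph{no} integral factor because $\chi$ is linear in $(f,\bar f)$ and $\nabla_{\bar f}$ strips that linear piece; this explains why the upper limit of $i$ is $k$ for $\xi'$ but only $k-1$ for $f'$, the missing integral being replaced by a bare Schwartz function $\Psi^i_{\mu\nu}$. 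The inductive step is then a direct substitution: plugging the ansatz for $z^{(n)}$ into $X_\chi$ contracts products of the form $\int \Phi_{\mu\nu}(\xi^{\mu'}\bar\xi^{\nu'}\Psi^{i'}_{\mu'\nu'})\,dx$ to scalar multiples of new monomials $\xi^{\mu''}\bar\xi^{\nu''}$, and similarly contracts any $f$ inside an $\int \mathbf{\Phi}\cdot\mathbf{f}\,dx$ with the expansion of $f^{(n)}$, producing fresh integrals $\int \mathbf{\Phi}^{ij}_{\mu\nu}\cdot\mathbf{f}\,dx$ whose densities remain in $\mathcal{S}(\mathbb{R}^3,\mathbb{C})$ by the Leibniz/convolution-type structure together with $\Phi_{\mu\nu},\Psi_{\mu\nu}\in\mathcal{S}$.

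Finally, the $t$-integration from $0$ to $1$ contributes only the numerical coefficients $a_{i\mu\nu}$ and $b_{i\mu\nu}$ absorbed in the statement. Passing to the limit $n\to\infty$ using the uniform convergence on $\mathcal{U}^{-\kappa,-s}$ yields the stated series. The main obstacle is not analytical but combinatorial: one needs a clean indexing scheme that simultaneously tracks (i) the order $k$ of the Picard iterate, (ii) the number $i$ of integral factors $\int\mathbf{\Phi}^{ij}_{\mu\nu}\cdot\mathbf{f}\,dx$ accumulated so far, and (iii) the multi-indices of all nested $\Phi,\Psi$ densities, and verifies that this collection is closed under one substitution into $X_\chi$. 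This bookkeeping is essentially the same as in the proof of Lemma 4.4 of Bambusi--Cuccagna \cite{BC}, but made sharper here because we preserve the degree structure $kM_0+1-i$ that will be used crucially later in the normal form Theorem \ref{thm:nft}.
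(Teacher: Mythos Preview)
Your approach via Picard iteration is workable but differs from the paper's, and contains one incorrect claim. The paper does not iterate Picard; it uses the Lie--Taylor expansion
\[
\xi' = \xi + \sum_{k\ge 1}\frac{1}{k!}\underbrace{\{\chi,\dots,\{\chi}_{k\ \text{times}},\xi\}\dots\}
\]
and proves by induction on $k$ that the $k$-fold Poisson bracket already has exactly the form $\sum_{i=0}^k\sum_{\mu+\nu=kM_0+1-i}a_{i\mu\nu}\xi^\mu\bar\xi^\nu\prod_j\int\mathbf{\Phi}^{ij}_{\mu\nu}\cdot\mathbf{f}\,dx$. For $f'$ the paper exploits that $\dot f$ depends only on $(\xi,\bar\xi)$: one integration gives $f'=f-\mathrm{i}\sum_{\mu+\nu=M_0+1}\int_0^1\xi^\mu(t)\bar\xi^\nu(t)\,dt\,\Psi_{\mu\nu}$, and $\xi^\mu(t)\bar\xi^\nu(t)$ is then expanded by the same Lie series. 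In this scheme $k$ is literally the Taylor order in $t$, so the degree count $\mu+\nu=kM_0+1-i$ and the bounds $i\le k$ (resp.\ $i\le k-1$ for $f'$) drop out mechanically from a single bracket computation.

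Your Picard scheme does preserve the structural form, but your assertion that the $n$-th iterate has the outer sum ``truncated at $k=n$'' is false. Substituting $z^{(n)}$ into the degree-$(M_0{+}1)$ field $X_\chi$ produces products of up to $M_0+1$ factors, each possibly already at level $k\ge 1$, so one Picard step generates terms at levels well beyond $n+1$. Your sentence ``one integration in $t$ increases by exactly $M_0$ the total polynomial degree'' describes one Poisson bracket (the paper's mechanism), not one Picard step. This is not fatal---the induction you actually need is only that each iterate is a \emph{finite} sum of monomials of the admissible shape and that this class is closed under substitution into $X_\chi$, after which you pass to the limit---but as stated your induction hypothesis is wrong, and the bookkeeping is heavier than the paper's, which gets the sharp identification of the level-$k$ piece for free from the iterated-bracket structure.
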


	\begin{proof}
		Let $z(t)=(\xi(t),f(t))=\phi^t(z)$, then $z(0)= z, z(1)= z'$. By virtue of the Lie transform and the analyticity of the Hamiltonian vector field $X_{\chi}$, we have
		\begin{equation}
			\xi' = \xi +\sum_{k= 1}^{\infty} \frac{1}{k!}\underbrace{\{\chi_r,\dots,\{\chi_r}_{k \text{ times}},\xi\}\},
		\end{equation}
		and it is easy to show by induction that 
		\begin{align*}
			\underbrace{\{\chi_r,\dots\{\chi_r}_{k \text{ times}},\xi\}\} =& \sum_{i=0}^{k}\sum_{\mu+\nu=kM_0+1-i}a_{i\mu \nu} \xi^{\mu}\bar{\xi}^{\nu}\prod_{j=1}^{i}\int \mathbf{\Phi}_{\mu\nu}^{ij}\cdot \mathbf{f}dx. 
		\end{align*}
		Similarly, since 
		\begin{equation}
			\frac{df}{dt}=-i\sum_{\mu+\nu=M_0+1}\xi^{\mu}\bar{\xi}^{\nu}\Psi_{\mu\nu},
		\end{equation}
		we have 
		\begin{equation}
			f' = f -i	\sum_{\mu+\nu=M_0+1}\int_0^1 \xi^{\mu}(t)\bar{\xi}^{\nu}(t)dt \Psi_{\mu \nu}.
		\end{equation}
		By virtue of the Lie transform, we have 
		\begin{equation}	\xi^{\mu}(t)\bar{\xi}^{\nu}(t) =  \sum_{k=0}^{\infty} \frac{t^k}{k!}\underbrace{\{\chi_r,\dots\{\chi_r}_{k \text{ times}},\xi^{\mu}\bar{\xi}^{\nu}\}\},
		\end{equation}
		hence
		\begin{equation}
			f' = f -i	\sum_{\mu+\nu=M_0 +1}\sum_{k=0}^{\infty} \frac{1}{(k+1)!}\underbrace{\{\chi_r,\dots\{\chi_r}_{k \text{ times}},\xi^{\mu}\bar{\xi}^{\nu}\}\} \Psi_{\mu\nu}.
		\end{equation}
		The rest follows similarly.
	\end{proof}	
	
	\subsection{Normal Form Transformation}
	\begin{defn}\label{def:normalform}
		A polynomial $Z$ is in normal form if we have
		$$
		Z=Z_{0}+Z_{1}
		$$
		where $Z_{0}$ is a linear combination of monomials $|\xi|^{2\mu}$, and $Z_{1}$ is a linear combination of monomials of the form
		$$
		\xi^{\mu} \overline{\xi^{\nu}} \int \Phi(x) f(x) d x, \quad \xi^{\mu^{\prime}} \overline{\xi^{\nu^{\prime}}} \int \Phi(x) \bar{f}(x) d x
		$$
		with indexes satisfying
		$$
		\omega (\mu-\nu)<-m, \quad \omega \left(\mu^{\prime}-\nu^{\prime}\right)>m
		$$
		and $\Phi \in \mathcal{S}\left(\mathbb{R}^{3}, \mathbb{C}\right).$
	\end{defn}

	\begin{thm}\label{thm:nft}
		For any $n>0,s>0$ and any integer $r$ with $0 \leq r \leq 2 N$, there exist open neighborhoods of the origin $\mathcal{U}_{r, n, s} \subset \mathcal{P}^{1 / 2,0}$, $\mathcal{U}_{r}^{-n,-s} \subset \mathcal{P}^{-n,-s}$, and an analytic canonical transformation $\mathcal{T}_{r}: \mathcal{U}_{r, n, s} \rightarrow \mathcal{P}^{1 / 2,0}$, such that $\mathcal{T}_{r}$ puts the system in normal form up to order $2r+4$. More precisely, we have
		$$
		H^{(r)}:=H \circ \mathcal{T}_{r}=H_{L}+Z^{(r)}+\mathcal{R}^{(r)}
		$$
		where:
		(i) $Z^{(r)}$ is a polynomial of degree $2r+2$ which is in normal form,\\
		(ii) $I-\mathcal{T}_{r}$ extends into an analytic map from $\mathcal{U}_{r}^{-n,-s}$ to $\mathcal{P}^{n, s}$ and
		$$
		\left\|z-\mathcal{T}_{r}(z)\right\|_{\mathcal{P}^{n, s}} \lesssim \|z\|_{\mathcal{P}^{-n,-s}}^{3}.
		$$
		(iii) we have $\mathcal{R}^{(r)}=\sum_{d=0}^{5} \mathcal{R}_{d}^{(r)}$ with the following properties: \\
		(iii.0) we have
		$$
		\mathcal{R}_{0}^{(r)}=\sum_{\mu+\nu=2r+4} a_{\mu \nu}^{(r)}\left(\xi\right) \xi^{\mu} \bar{\xi}^{\nu}  
		$$
		where $a_{\mu \nu}^{(r)}\in C^{\infty}(\mathbb{C}), \overline{a_{\mu \nu}^{(r)}}= a_{\nu\mu}^{(r)}$ satisfying the following expansion with a sufficiently large integer $M^\star>0$:
		\begin{equation}\label{eq:R0-coef}
			a_{\mu \nu}^{(r)}(\xi)=\sum_{k=0}^{M^\star}\sum_{|\alpha+\beta|=2k}a^{(r)}_{\mu \nu \alpha \beta} \xi^{\alpha}\bar{\xi}^{\beta},
		\end{equation}
		(iii.1) we have
		$$
		\mathcal{R}_{1}^{(r)}=\sum_{\mu+\nu=2r+3} \xi^{\mu} \bar{\xi}^{\nu} \int_{\mathbb{R}^{3}} \mathbf{\Phi}_{\mu \nu}^{(r)}\left(x, \xi\right) \cdot  \mathbf{f}(x) d x
		$$
		where the map
		$$
		\mathbb{C}  \ni \xi \longmapsto \mathbf{\Phi}_{\mu \nu}^{(r)}(\cdot, \xi) \in\left(H^{n, s}\right)^{2} \text { is } C^{\infty}, \quad \mathbf{\Phi}_{\mu \nu}^{(r)}= (\Phi_{\mu \nu}^{(r)},\overline{\Phi_{\nu \mu}^{(r)}})
		$$
		satisfying the following expansion:
		\begin{equation}\label{eq:R1-coef}	
			\mathbf{\Phi}_{\mu \nu}^{(r)}(\cdot, \xi)=\sum_{k=0}^{M^\star}\sum_{|\alpha+\beta|=2k} \mathbf{\Phi}^{(r)}_{\mu \nu \alpha \beta}(x) \xi^{\alpha}\bar{\xi}^{\beta}
		\end{equation}
		with $\mathbf{\Phi}^{(r)}_{\mu \nu \alpha \beta}(x) \in \mathcal{S}\left(\mathbb{R}^{3}, \mathbb{C}\right)$.\\
		(iii.2-4) for $d=2,3,4$, we have
		\begin{equation}\label{eq:Rd}
			\mathcal{R}_{d}^{(r)}= \int_{\mathbb{R}^{3}} F_{d}^{(r)}\left(x, z\right)[U(x)]^{d} d x + \sum_{k} \prod_{l=1}^{d}\int_{\mathbb{R}^{3}} \mathbf{\Lambda}_{dlk}^{(r)}(x,z)\cdot\mathbf{f} d x,
		\end{equation}
		where $F_{4}^{(r)}\equiv 1;$ for $d=2,3,$
		$F_{d}^{(r)}(x, z) \in\mathbb{R}$ is a linear combination of terms of the form
		\begin{align}\label{eq:Rd-coef-F}
			\sum_{k= 0}^{M^\star}\sum_{i=0}^k\sum_{\mu+\nu= 4-d + 2k - i}  \xi^\mu\overline{\xi^\nu}\prod_{j=1}^i \int \mathbf{\Phi}^{ij}_{\mu\nu}(x) \cdot \mathbf{f} dx \Psi^{i}_{\mu\nu}(x),
		\end{align}
		and $\mathbf{\Lambda}_{dlk}^{(r)}(x, z)= (\Lambda_{dlk}^{(r)},\overline{\Lambda_{dlk}^{(r)}}) \ (d=2,3,4)$ is a linear combination of terms of the form
		\begin{align}\label{eq:Rd-coef-lambda}
			\sum_{k= 0}^{M^\star}\sum_{i=0}^k\sum_{\mu+\nu= 1 + 2k - i}  \xi^\mu\overline{\xi^\nu}\prod_{j=1}^i \int \tilde{\mathbf{\Phi}}^{ij}_{\mu\nu}(x) \cdot \mathbf{f} dx\tilde{\Psi}^{i}_{\mu\nu}(x),
		\end{align}
		with 
		$\mathbf{\Phi}^{ij}_{\mu\nu}(x),  \tilde{\mathbf{\Phi}}^{ij}_{\mu\nu}(x),
		\Psi^{i}_{\mu\nu}(x), \tilde{\Psi}^{i}_{\mu\nu}(x) \in \mathcal{S}\left(\mathbb{R}^{3}, \mathbb{C}\right),$ and the sum over index $k$ in \eqref{eq:Rd} is a finite sum.\\
		(iii.5) for $d=5$, we have
		$$\left\|\nabla_{z,\bar{z}}\mathcal{R}_{5}^{(r)}\right\|_{\left(\mathcal{P}^{n, s}\right)^2}\lesssim |\xi|^{M^\star}.$$
		
	\end{thm}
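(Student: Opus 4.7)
The plan is to prove Theorem~\ref{thm:nft} by induction on $r$, constructing $\mathcal{T}_r = \phi^{(1)}\circ\cdots\circ\phi^{(r)}$ where each $\phi^{(j)}$ is the time-one Lie transform generated by a polynomial Hamiltonian $\chi^{(j)}$ of the form \eqref{eq:chi}. For the base case $r=0$, I would set $\mathcal{T}_0=\mathrm{id}$ and $Z^{(0)}=0$, and expand via the multinomial theorem
\begin{equation*}
H_P = \sum_{d=0}^{4}\binom{4}{d}\int_{\mathbb{R}^3}\left(\tfrac{\xi+\bar\xi}{\sqrt{2\omega}}\varphi(x)\right)^{4-d}U(x)^d\,dx,
\end{equation*}
which yields $\mathcal{R}^{(0)}=\sum_{d=0}^{4}\mathcal{R}_d^{(0)}$ with $F_d^{(0)}$ matching \eqref{eq:Rd-coef-F} trivially at $k=i=0$ and $\mathcal{R}_5^{(0)}=0$.

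For the inductive step, assuming $H^{(r-1)}$ has the stated form, I would choose $\chi^{(r)}$ of the schematic form
\begin{equation*}
\chi^{(r)}(\xi,f) = \sum_{\substack{\mu+\nu=2r+2 \\ \mu\neq\nu}} c_{\mu\nu}\,\xi^\mu\bar\xi^\nu + \sum_{\mu+\nu=2r+3} \xi^\mu\bar\xi^\nu\int_{\mathbb{R}^3}\mathbf{A}_{\mu\nu}(x)\cdot\mathbf{f}\,dx
\end{equation*}
solving the cohomological equation $\{H_L,\chi^{(r)}\} + (\text{non-normal part of }\mathcal{R}_0^{(r-1)}+\mathcal{R}_1^{(r-1)})=0$. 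Using the identities $\{H_L,\xi^\mu\bar\xi^\nu\}=\mathrm{i}\omega(\mu-\nu)\xi^\mu\bar\xi^\nu$ and $\{H_L,\xi^\mu\bar\xi^\nu\int A f\,dx\}=\mathrm{i}\,\xi^\mu\bar\xi^\nu\int(\omega(\mu-\nu)+B)A\cdot f\,dx$, this reduces to inverting either a nonzero scalar or the operator $\omega(\mu-\nu)+B$ on Schwartz functions; the latter is well-defined precisely when $-\omega(\mu-\nu)\notin\sigma(B)=\{\omega\}\cup[m,\infty)$, and indices violating this criterion are exactly those in Definition~\ref{def:normalform}, becoming the new contribution to $Z^{(r)}$. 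Analyticity and canonicity of $\phi^{(r)}$ then follow from the vector-field bound \eqref{eq:X-chi}. The crucial structural observation driving the order-two increase in (i) is that every Poisson bracket against the quartic $H_P$ raises total polynomial degree by $4-2=2$, so the next non-normal contribution appears at degree $2r+4$ rather than $2r+3$; iterating up to $r=2N$ times produces the $|\xi|^{4N+2}$ scaling that ultimately drives the Fermi's Golden Rule dissipation rate in Theorem~\ref{main-result}.

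To propagate the explicit structural expansions \eqref{eq:R0-coef}--\eqref{eq:Rd-coef-lambda}, I would substitute the expansions of Lemma~\ref{lem:z-expansion} for the new variables $(\xi',f')$ into the inherited $\mathcal{R}_d^{(r-1)}$ and into the iterated commutators $\frac{1}{k!}\mathrm{ad}_{\chi^{(r)}}^k(H^{(r-1)})$ for $k\geq 1$, isolating all Taylor tails of $\xi$-degree exceeding $M^\star$ into $\mathcal{R}_5^{(r)}$ and controlling them by the explicit polynomial remainder, yielding (iii.5). A careful index count then shows that the constraints $\mu+\nu=4-d+2k-i$ in \eqref{eq:Rd-coef-F} and $\mu+\nu=1+2k-i$ in \eqref{eq:Rd-coef-lambda} are preserved by the polynomial arithmetic of the Lie transform. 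The principal obstacle is precisely this combinatorial bookkeeping: verifying that the nested product-of-integral factors $\prod_{j=1}^i\int\mathbf{\Phi}^{ij}\cdot\mathbf{f}\,dx$ generated by successive Lie transforms do not proliferate into an uncontrolled family but remain within the prescribed index ranges with Schwartz-class spatial profiles, while simultaneously maintaining the clean separation into the five pieces $\mathcal{R}_0,\ldots,\mathcal{R}_5$. This refinement over \cite{BC} --- keeping explicit record of which products occur and with which spatial profiles --- is precisely what later permits the weighted $L^p$ estimates of Lemma~\ref{Weighted-Lp-Lp'} to close the bootstrap argument in the weak-resonance regime $N>1$.
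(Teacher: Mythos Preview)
Your approach is essentially the paper's: induction on $r$ with $\mathcal{T}_0=\mathrm{id}$, multinomial expansion of $H_P$ for the base case, and at each step a generator $\chi$ of the form \eqref{eq:chi} solving a homological equation against $H_L$, followed by application of Lemma~\ref{lem:z-expansion} to track the structure of the remainders and absorption of high-degree tails into $\mathcal{R}_5$.

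Two imprecisions are worth flagging. First, there is an index slip in your generator: to cancel the leading non-normal part of $\mathcal{R}_1^{(r-1)}$, which sits at $\mu+\nu=2(r-1)+3=2r+1$, the $f$-linear part of $\chi^{(r)}$ must have $\mu+\nu=2r+1$, not $2r+3$. Second, and more substantively, your explanation of the order-two increase is not the operative mechanism: in the inductive step one brackets against $\chi^{(r)}$, not against $H_P$, so the ``degree $+2$ per bracket with a quartic'' heuristic does not apply. The actual reason the next non-normal contribution lands two degrees higher is the parity constraint $|\alpha+\beta|=2k$ in the coefficient expansions \eqref{eq:R0-coef}--\eqref{eq:R1-coef}. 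The paper exploits this by stripping off only the constant-in-$\xi$ pieces $a_{\mu\nu}^{(r)}(0)$ and $\mathbf{\Phi}_{\mu\nu}^{(r)}(\cdot,0)$ into the source term $K_{r+1}$ for the homological equation, so that the residues $\mathcal{R}_{02}^{(r)}$, $\mathcal{R}_{12}^{(r)}$ automatically begin at degree $2r+6$ and $2r+5$ respectively. This parity ultimately stems from the evenness of $H_P\sim u^4$, but it must be checked to survive each Lie transform --- which is precisely part of the index bookkeeping you correctly identify as the main burden. Your proposal does not make explicit this splitting of constant versus higher-order coefficient parts, and without it the claim that $\chi^{(r)}$ can be chosen with \emph{constant} coefficients $c_{\mu\nu}$, $\mathbf{A}_{\mu\nu}$ while still eliminating all of $\mathcal{R}_0^{(r-1)}+\mathcal{R}_1^{(r-1)}$ (whose coefficients are $\xi$-dependent) would be inconsistent.
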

	\begin{rem}
		For $f\in X=W^{2,2}\left(\mathbb{R}^{3}, \mathbb{C}\right)\cap W^{2,1}\left(\mathbb{R}^{3}, \mathbb{C}\right),$ since $H^{k, s}\left(\mathbb{R}^{3}, \mathbb{C}\right) \subset X$ for $k, s$ large, we also have 
		$$
		\left\|(\xi, f)-\mathcal{T}_{r}(\xi, f)\right\|_{\mathbb{C} \times X} \lesssim \|(\xi, f)\|_{\mathcal{P}^{-n,-s}}^{3},
		$$ 
	\end{rem}
	\begin{proof}
		We prove Theorem \ref{thm:nft} by induction. We note that with some slightly abuses of notations, we denote $a$ with indexs as some constant, and denote $\Phi$ or $\Psi$ with indexs as some Schwartz function, they may change line from line, depending on the context. We also note that the sum with no upper index always denotes finite sum.\\
		\textbf{(Step 0)} First, when $r=0$, Theorem \ref{thm:nft} holds with $\mathcal{T}_{0}= I, Z^{(0)}=0, \mathcal{R}^{(0)}=H_P.$ And we have
		\begin{align*}
			R_{0}^{(0)}&=\sum_{\mu+\nu=4} \xi^{\mu} \bar{\xi}^{\nu} \int_{\mathbb{R}^{3}} \frac{\varphi^4}{2 \sqrt{\omega^{4} }}   d x,\\
			R_{1}^{(0)}&=\sum_{\mu+\nu=3} \xi^{\mu} \bar{\xi}^{\nu} \int_{\mathbb{R}^{3}} \frac{3\varphi^3}{2 \sqrt{\omega^{3}}} \left(B^{-\frac{1}{2}} f+B^{-\frac{1}{2}} \bar{f}\right) d x,\\
			R_{d}^{(0)}&=\int_{\mathbb{R}^{3}} F_{d}^{(0)} U^{d} d x, \quad F_{d}^{(0)}=\sum_{\mu+\nu=4-d} \frac{C_{4}^{d}}{\sqrt{2}^{4-d}\sqrt{\omega^{4-d}}} \xi^{\mu}\bar{\xi}^\nu \varphi^{4-d}  (d=2,3), F_4^{(0)}=1.  
		\end{align*}
		Thus, $a_{\mu \nu}^{(0)} \triangleq \int_{\mathbb{R}^{3}} \frac{\varphi^4}{2 \sqrt{\omega^{4}}}   d x$ and 
		$ 
		\mathbf{\Phi}_{\mu \nu}^{(0)} \triangleq \left(\frac{3}{2\sqrt{\omega^{3}}} B^{-\frac12}\left(\varphi^3 \right), \frac{3}{2 \sqrt{\omega^3}} B^{-\frac12}\left(\varphi^3\right) \right) .$
		
		\noindent \textbf{(Step $r\to r+1$)} Now we assume that the theorem holds for some $0\le r\le 2N$, we shall prove this for $r+1$. More precisely, define
		\begin{align*}
			&\mathcal{R}_{02}^{(r)}=\mathcal{R}_{0}^{(r)}-\sum_{\mu+\nu=2r+4}  a_{\mu \nu}^{(r)}( 0) \xi^{\mu} \bar{\xi}^{\nu}  ,\\
			&\mathcal{R}_{12}^{(r)}=\mathcal{R}_{1}^{(r)}-\sum_{\mu+\nu=2r+3} \xi^{\mu} \bar{\xi}^{\nu} \int_{\mathbb{R}^{3}} \mathbf{\Phi}_{\mu \nu}^{(r)}(x,0) \cdot \mathbf{f}(x) d x.
		\end{align*} 
		By \eqref{eq:R0-coef} and \eqref{eq:R1-coef}, we have
		\begin{align*}
			\mathcal{R}_{02}^{(r)} + \mathcal{R}_{12}^{(r)} = &\sum_{\mu+\nu=2r+6} a_{\mu \nu}^{(r+1)}(\xi) \xi^{\mu} \bar{\xi}^{\nu}   \\
			&+\sum_{\mu+\nu=2r+5} \xi^{\mu} \bar{\xi}^{\nu} \int_{\mathbb{R}^{3}} \mathbf{\Phi}_{\mu \nu}^{(r+1)}(x,\xi) \cdot \mathbf{f}(x) d x
		\end{align*} 
		where the coefficients $a_{\mu \nu}^{(r+1)}(\xi),\mathbf{\Phi}_{\mu \nu}^{(r+1)}(x,\xi)$ satisfy  \eqref{eq:R0-coef}-\eqref{eq:R1-coef} respectively, with $r$ replaced by $r+1$. 
		
		Set 
		\begin{align*}
			K_{r+1}:=& \sum_{\mu+\nu=2r+4} a_{\mu \nu}^{(r)}(0) \xi^{\mu} \bar{\xi}^{\nu} +\sum_{\mu+\nu=2r+3} \xi^{\mu} \bar{\xi}^{\nu} \int_{\mathbb{R}^{3}} \mathbf{\Phi}_{\mu \nu}^{(r)}(x,0) \cdot \mathbf{f}(x) d ,
		\end{align*}
		which is real-valued. Then, we solve the following homologic equation
		\begin{align*}
			\left\{H_{L}, \chi_{r+1}\right\}+Z_{r+1}=K_{r+1},
		\end{align*}
		with $Z_{r+1}$ in normal form. Thus, 
		\begin{align*}
			Z_{r+1}=&\sum_{\mu+\nu=2r+4, \mu=\nu 
			} a_{\mu \mu}^{(r)}(0) |\xi|^{2\mu} +\sum_{\substack{\mu+\nu=2r+3\\ \omega \cdot(\mu-\nu)<-m}} \xi^{\mu} \bar{\xi}^{\nu} \int \Phi_{\mu \nu}^{(r)}(x,0)  f(x) d x\\
			&+\sum_{\substack{\mu+\nu=2r+3\\\omega \cdot(\mu-\nu)>m}} \xi^{\mu} \bar{\xi}^{\nu} \int \overline{\Phi_{\nu\mu}^{(r)}}(x,0) \bar{f}(x) d x,
		\end{align*}
		and
		\begin{align*}
			\chi_{r+1}=& i\sum_{\substack{\mu+\nu=2r+4 \\
					\omega \cdot(\mu-\nu)\neq 0}}  \frac{a_{\mu \nu}^{(r)}(0)}{\omega \cdot(\mu-\nu)} \xi^{\mu} \bar{\xi}^{\nu} +i\sum_{\substack{\mu+\nu=2r+3\\ \omega \cdot(\mu-\nu)>-m}} \xi^{\mu} \bar{\xi}^{\nu} \int R_{\nu\mu}\Phi_{\mu \nu}^{(r)}(x,0)f d x\\
			&-i\sum_{\substack{\mu+\nu=2r+3\\\omega \cdot(\mu-\nu)<m}} \xi^{\mu} \bar{\xi}^{\nu} \int R_{\mu\nu}\overline{\Phi_{\nu\mu}^{(r)}}(x,0) \bar{f} d x,
		\end{align*}
		where the operator $$R_{\mu\nu} := (B-\omega \cdot(\mu-\nu))^{-1}.$$
		Let $\phi_{r+1}$ be the Lie transform generated by $\chi_{r+1}$, i.e. $\phi_{r+1}= \phi_{r+1}^t|_{t=1}$, where
		$$\frac{d\phi_{r+1}^t}{dt}= X_{\chi_{r+1}}=(-i\partial_{\bar{\xi}} \chi_{r+1},-i\nabla_{\bar{f}}\chi_{r+1}).$$
		Then, for $z'=(\xi',f')=\phi_{r+1}(\xi,f),$ Lemma  \ref{lem:z-expansion} holds, i.e.
		\begin{align}
			&\xi' = \xi + \sum_{k=1}^{\infty}\sum_{i=0}^{k}\sum_{\mu+\nu=(2r+2)k+1-i}a_{i\mu \nu} \xi^{\mu}\bar{\xi}^{\nu}\prod_{j=1}^{i}\int \mathbf{\Phi}_{\mu\nu}^{ij}\cdot \mathbf{f}dx, \label{eq:xi-expansion}\\
			&f'  = f+ \sum_{k=1}^{\infty}\sum_{i=0}^{k-1}\sum_{\mu+\nu=(2r+2)k+1-i} \xi^{\mu}\bar{\xi}^{\nu}\prod_{j=1}^{i}\int \mathbf{\Lambda}_{\mu\nu}^{ij}\cdot \mathbf{f}dx \Psi_{\mu, \nu}^{i}. \label{eq:f-expansion}
		\end{align}
		Recall that $$R^{(r)}=K_{r+1}+ R^{(r)}_{02}+ R^{(r)}_{12}+\sum_{d=2}^5 R^{(r)}_d,$$ and $ K_{r+1}= Z_{r+1} + \{H_L,\chi_{r+1}\}$, we have
		\begin{align}
			\nonumber H^{(r+1)}\triangleq & H^{(r)}\circ\phi_{r+1}= H\circ ( \mathcal{T}_{r}\circ\phi_{r+1})\equiv H\circ  \mathcal{T}_{r+1} \\
			= & H_L \circ \phi_{r+1}+ Z^{(r)} \circ \phi_{r+1} + R^{(r)}\circ \phi_{r+1} \nonumber \\
			= & H_L + Z^{(r)} + Z_{r+1} \nonumber\\
			&+[H_L\circ\phi_{r+1}-(H_L+\{\chi_{r+1},H_L\})] \label{eq:H-error-r}\\
			&+ Z^{(r)} \circ \phi_{r+1} - Z^{(r)} \label{eq:Z-error-r}\\
			&+(K_{r+1}\circ\phi_{r+1}-K_{r+1})\label{eq:K-error-r}\\
			&+ (R^{(r)}_{02}+ R^{(r)}_{12})\circ\phi_{r+1} \label{eq:R02-error}\\
			&+\sum_{d=2}^5 R^{(r)}_d\circ\phi_{r+1}.\label{eq:R-error-r}
		\end{align}
		We define $Z^{(r+1)}= Z^{(r)} + Z_{r+1}$ in the normal form of order $2r+4$. For the term \eqref{eq:H-error-r}, similar to the proof of Lemma \ref{lem:z-expansion}, we have
		\begin{align*}
			&H_L\circ\phi_{r+1}-(H_L+\{\chi_{r+1},H_L\})\\
			= &\sum_{k=2}^{\infty} \frac{1}{k!}\underbrace{\{\chi_{r+1},\dots\{\chi_{r+1}}_{k \text{ times}},H_L\}\} \\
			= &  \sum_{k=2}^{\infty}\sum_{i=0}^{k}\sum_{\mu+\nu=2(r+1)k+2-i}a_{i\mu \nu} \xi^{\mu}\bar{\xi}^{\nu}\prod_{j=1}^{i}\int \mathbf{\Phi}_{\mu\nu}^{ij}\cdot \mathbf{f}dx \\
			= & \sum_{k=2}^{M^*}\left(\sum_{\mu+\nu=2(r+1)k+2}a_{0\mu \nu} \xi^{\mu}\bar{\xi}^{\nu} + \sum_{\mu+\nu=2(r+1)k+1} a_{1\mu \nu}\xi^{\mu}\bar{\xi}^{\nu} \int  \mathbf{\Phi}^{11}_{\mu\nu}\cdot\mathbf{f}dx \right) \nonumber\\
			& + \sum_{k=2}^{M^*}\sum_{i=2}^{k}\sum_{\mu+\nu=2(r+1)k+2-i}a_{i\mu \nu} \xi^{\mu}\bar{\xi}^{\nu}\prod_{j=1}^{i}\int \mathbf{\Phi}_{\mu\nu}^{ij}\cdot \mathbf{f}dx + \mathcal{O}(|\xi|^{M^*}).
		\end{align*}
		Thus \eqref{eq:H-error-r} can be absorbed into $R^{(r+1)}_0, R^{(r+1)}_1$, $R^{(r+1)}_2$ and $R^{(r+1)}_5$.
		
		The terms \eqref{eq:Z-error-r},   \eqref{eq:K-error-r} and \eqref{eq:R02-error} can be handled similarly.
		
		For the term \eqref{eq:R-error-r}, denote $f'= f+ G_f, U'= U + G_U$, then for $d=2,3,4$, we have 
		\begin{align*}
			R^{(r)}_d\circ\phi_{r+1} &= \int_{\mathbb{R}^{3}} F_{d}^{(r)}\left(x, z'\right)(U+G_U)^{d} d x + \sum_{k}\prod_{l=1}^{d}\int_{\mathbb{R}^{3}} \mathbf{\Lambda}_{dlk}^{(r)}(x,z')\cdot \left(\mathbf{f} + \mathbf{G}_f\right)d x \\
			&= \sum_{j=0}^d \left[\int F_{d}^{(r)}\left(x, z'\right) U^{j} G_U^{d-j} d x + \sum_{k,l_i}\prod_{i=1}^{j}\int_{\mathbb{R}^{3}} \mathbf{\Lambda}_{d,l_i,k}^{(r)}(x,z')\cdot \mathbf{f} d x \prod_{l\neq l_i}\int_{\mathbb{R}^{3}} \mathbf{\Lambda}_{d,l}^{(r)}(x,z')\cdot \mathbf{G}_f d x\right]\\
			& := \sum_{j=0}^d H_{dj}.
		\end{align*}
		By \eqref{eq:f-expansion}, we have
		\begin{align*}
			G_f = \sum_{k=1}^{\infty}\sum_{i=0}^{k-1}\sum_{\mu+\nu=(2r+2)k+1-i} \xi^{\mu}\bar{\xi}^{\nu}\prod_{j=1}^{i}\int \mathbf{\Lambda}_{\mu\nu}^{ij}\cdot \mathbf{f}dx \Psi_{\mu, \nu}^{i}, \quad G_U= (G_f+\overline{G_f})/\sqrt{2B}.
		\end{align*}
		Therefore, by \eqref{eq:Rd-coef-F}, \eqref{eq:Rd-coef-lambda} and \eqref{eq:xi-expansion}, we derive
		\begin{align*}
			H_{d0} &= \int F_{d}^{(r)}\left(x, z'\right) G_U^{d} d x + \sum_{k}\prod_{l=1}^d\int_{\mathbb{R}^{3}} \mathbf{\Lambda}_{dlk}^{(r)}(x,z')\cdot \mathbf{G}_f d x \\
			&= \sum_{k=0}^{M^*}\sum_{i=0}^k \sum_{\mu+\nu=4+(2r+2)d + 2k -i} a_{i\mu\nu}\xi^{\mu}  \bar{\xi}^{\nu}\sum\prod_{j=1}^{i}\int \mathbf{\Phi}_{\mu\nu}^{ij}\cdot \mathbf{f}dx+\mathcal{O}(|\xi|^{M^*}),\\
			H_{d1} &= \int F_{d}^{(r)}\left(x, z'\right) U G_U^{d-1} d x +\sum_k \sum_{i=1}^d \int_{\mathbb{R}^{3}} \mathbf{\Lambda}_{dik}^{(r)}(x,z')\cdot \mathbf{f} d x \prod_{l\neq i}\int_{\mathbb{R}^{3}} \mathbf{\Lambda}_{dlk}^{(r)}(x,z')\cdot \mathbf{G}_f d x\\
			&= \sum_{k=0}^{M^*}\sum_{i=0}^k \sum_{\mu+\nu=3+(2r+2)(d-1) + 2k -i} a_{i\mu\nu}\xi^{\mu}  \bar{\xi}^{\nu}\sum\prod_{j=1}^{i+1}\int \mathbf{\Phi}_{\mu\nu}^{ij}\cdot \mathbf{f}dx+\mathcal{O}(|\xi|^{M^*}), 
		\end{align*}
		and for $2\le j \le d$,
		\begin{align*}
			H_{dj} &= \int F_{d}^{(r)}\left(x, z'\right) U^{j} G_U^{d-j} d x +\sum_{k,l_i} \prod_{i=1}^{j}\int_{\mathbb{R}^{3}} \mathbf{\Lambda}_{d,l_i,k}^{(r)}(x,z')\cdot \mathbf{f} d x \prod_{l\neq l_i}\int_{\mathbb{R}^{3}} \mathbf{\Lambda}_{dlk}^{(r)}(x,z')\cdot \mathbf{G}_f d x\\
			&= \int_{\mathbb{R}^{3}} F_{j}^{(r+1)}\left(x, z\right)U^{j} d x + \sum_k \prod_{l=1}^{j}\int_{\mathbb{R}^{3}} \mathbf{\Lambda}_{jlk}^{(r+1)}(x,z)\cdot \mathbf{f}d x + \mathcal{O}(|\xi|^{M^*})
		\end{align*}
		where 
		\begin{align*}
			F_{j}^{(r+1)}= F_{d}^{(r)}\left(x, z'\right) G_U^{d-j}-\mathcal{O}(|\xi|^{M^*}) = \sum_{k=0}^{M^*}\sum_{i=0}^k \sum_{\mu+\nu=4-j + (2r+2)(d-j) + 2k -i} a_{i\mu\nu}\xi^{\mu}  \bar{\xi}^{\nu}\sum\prod_{l=1}^{i}\int \mathbf{\Phi}_{\mu\nu}^{il}\cdot \mathbf{f}dx \psi^l_{\mu\nu}(x),
		\end{align*}
		note that $F_{4}^{(r+1)}\equiv 1.$
		Thus $H_{dj}$ can be absorbed into $R^{(r+1)}$. 
		Finally,  it is direct to see $R^{(r)}_5\circ\phi_{r+1}$ can be absorbed into $R^{(r+1)}_5.$
	\end{proof}

	\section{Isolation of the Key Resonant Terms}\label{Sec-isolation of the key resonant}
	Apply Theorem \ref{thm:nft} for $r=2N$, we obtain the new Hamiltonian 
	\begin{align*}
		H = H_L(\xi,\mathbf{f}) + Z_0(\xi) + Z_1(\xi,\mathbf{f}) + \mathcal{R},
	\end{align*}
	where
	$$ Z_1(\xi,\mathbf{f}) : = \langle G, f \rangle + \langle \bar{G}, \bar{f} \rangle, $$
	$$
	G:=\sum_{(\mu, \nu) \in M} \xi^{\mu} \bar{\xi}^{\nu} \Phi_{\mu \nu}(x), \Phi_{\mu \nu} \in \mathcal{S}\left(\mathbb{R}^{3}, \mathbb{C}\right),
	$$
	with 
	$$
	M=\{(\mu, \nu)\mid \mu +\nu=2r+1, 0 \leq r \leq 2 N, \omega(\mu-\nu)<-m\} .
	$$
	Then, the corresponding Hamiltonian equations are
	\begin{align}
		\dot{f} & = -\mathrm{i}(B f+\bar{G})-\mathrm{i}\partial_{\bar{f}}\mathcal{R}, \label{eq:f}\\
		\dot{\xi}& = - \mathrm{i}\omega\xi - \mathrm{i}\partial_{\bar{\xi}}Z_0-\mathrm{i}\left\langle \partial_{\bar{\xi}}G, f\right\rangle - \mathrm{i}\left\langle \partial_{\bar{\xi} }\bar{G}, \bar{f}\right\rangle -\mathrm{i}\partial_{\bar{\xi}}\mathcal{R}.\label{eq:xi}
	\end{align}
	Define
	$$
	M_1=\{(\mu, \nu)\mid \mu+\nu = 2N+1, \quad \omega(\mu-\nu)<-m\} ,
	$$ 
	and
	$$
	G=\sum_{(\mu, \nu) \in M} \xi^{\mu} \bar{\xi}^{\nu} \Phi_{\mu \nu}=\sum_{(\mu, \nu) \in M_1} \xi^{\mu} \bar{\xi}^{\nu} \Phi_{\mu \nu}+\sum_{(\mu, \nu) \in M\backslash M_1} \xi^{\mu} \bar{\xi}^{\nu} \Phi_{\mu \nu}:=\mathcal{M}_G+\mathcal{R}_G.
	$$
	Since $(2N-1)\omega<m<(2N+1)\omega$, we have 
	$$
	\mathcal{M}_G=\bar{\xi}^{2N+1}\Phi_{0,2N+1}.
	$$
	Let $\eta=e^{\mathrm{i}\omega t}\xi, $ by \eqref{eq:f} and Duhamel's formula, we have
	\begin{align}
		f(t) &=e^{-\mathrm{i}B t}f(0)+\int_{0}^{t}e^{-\mathrm{i}B (t-s)}(-\mathrm{i}\bar{G}-\mathrm{i}\partial_{\bar{f}}\mathcal{R})ds \label{f formula}\\
		&= e^{-\mathrm{i}B t}f(0) -\mathrm{i}e^{-\mathrm{i}B t}\sum_{(\mu, \nu) \in M}\int_{0}^{t}e^{\mathrm{i} (B-\omega (\nu-\mu))s}\eta^{\nu}\bar{\eta}^{\mu}\bar{\Phi}_{\mu \nu}ds -\mathrm{i}\int_{0}^{t}e^{-\mathrm{i}B (t-s)}\partial_{\bar{f}}\mathcal{R}ds \nonumber\\
		&:=\mathcal{M}_{f}+\mathcal{R}_{f}, \label{f expansion}
	\end{align}
	where
	\begin{align*}
		\mathcal{M}_{f}&=-\sum_{(\mu, \nu)\in M_1}e^{-\mathrm{i} \omega (\nu-\mu)t}\eta^{\nu}\bar{\eta}^{\mu}(B-\omega (\nu-\mu)-\mathrm{i}0)^{-1}\bar{\Phi}_{\mu \nu},\\
		&= 	-\xi^{2N+1}(B-(2N+1)\omega-i0)^{-1}\bar{\Phi}_{0,2N+1}
	\end{align*}
	is the main term and 
	\begin{align*}
		\mathcal{R}_{f}=&e^{-\mathrm{i}B t}f(0)-\mathrm{i}\int_{0}^{t}e^{-\mathrm{i}B (t-s)}\partial_{\bar{f}}\mathcal{R}ds\\
		&+\sum_{(\mu, \nu) \in M_1}\eta^{\nu}(0)\bar{\eta}^{\mu}(0)e^{-\mathrm{i}B t}(B-\omega (\nu-\mu)-\mathrm{i}0)^{-1}\bar{\Phi}_{\mu \nu} \\
		&+e^{-\mathrm{i}B t}\sum_{(\mu, \nu) \in M_1}\int_{0}^{t}e^{\mathrm{i} (B-\omega (\nu-\mu))s}\frac{d}{ds}(\eta^{\nu}\bar{\eta}^{\mu})(B-\omega (\nu-\mu)-\mathrm{i}0)^{-1}\bar{\Phi}_{\mu \nu}ds\\
		&-\mathrm{i}e^{-\mathrm{i}B t}\sum_{(\mu, \nu) \in M \backslash M_1}\int_{0}^{t}e^{\mathrm{i} (B-\omega (\nu-\mu))s}\eta^{\nu}\bar{\eta}^{\mu}\bar{\Phi}_{\mu \nu}ds
	\end{align*}
	are error terms, which will be estimated later, see Section \ref{Sec-error estimates}. Substitute \eqref{f expansion} into \eqref{eq:xi}, we obtain 
	\begin{equation}\label{eq:xi2}
		\dot{\xi} = - \mathrm{i}\omega\xi - \mathrm{i}\partial_{\bar{\xi}}Z_0-\mathrm{i}\left\langle \partial_{\bar{\xi}}\mathcal{M}_G, \mathcal{M}_{f}\right\rangle - \mathrm{i}\left\langle \partial_{\bar{\xi} }\bar{\mathcal{M}}_G, \bar{\mathcal{M}}_{f}\right\rangle+\mathcal{R}_{\xi},
	\end{equation}
	where 
	\begin{equation}\label{eq:Rxi}
		\mathcal{R}_{\xi}=-\mathrm{i}\left\langle \partial_{\bar{\xi}}\mathcal{R}_G, \mathcal{M}_{f}\right\rangle - \mathrm{i}\left\langle \partial_{\bar{\xi} }\bar{\mathcal{R}}_G, \bar{\mathcal{M}}_{f}\right\rangle-\mathrm{i}\left\langle \partial_{\bar{\xi}}G, \mathcal{R}_{f}\right\rangle - \mathrm{i}\left\langle \partial_{\bar{\xi} }\bar{G}, \bar{\mathcal{R}}_{f}\right\rangle -\mathrm{i}\partial_{\bar{\xi}}\mathcal{R}
	\end{equation}
	can be treated as error terms. Hence, we compute
	\begin{align*}
		\frac{d}{dt}|\xi|^2 & =2\operatorname{Re}(\bar{\xi} \dot{\xi})\\
		&=-2(2N+1)|\xi|^{4N+2}\operatorname{Im}\langle \Phi_{0,2N+1},  (B-(2N+1)\omega-i0)^{-1}\bar{\Phi}_{0,2N+1}\rangle+2Re(\bar{\xi}\mathcal{R}_{\xi}).
	\end{align*}
	Note that by using Plemelji formula
	$$\frac{1}{x \mp i 0}= \operatorname{P.V} \frac{1}{x} \pm \mathrm{i} \pi \delta(x),$$ 
	we have
	\begin{align*}
		\gamma :=& (2N+1)\operatorname{Im}\langle \Phi_{0,2N+1},  (B-(2N+1)\omega-i0)^{-1}\bar{\Phi}_{0,2N+1}\rangle\\
		=& (2N+1)\langle \Phi_{0,2N+1},  \delta(B-(2N+1)\omega)\bar{\Phi}_{0,2N+1}\rangle\\
		\ge& 0,
	\end{align*}
	and
	\begin{equation}\label{eq:xi3}
		\frac{d}{dt}|\xi|^2=-2\gamma |\xi|^{4N+2}+2Re(\bar{\xi}\mathcal{R}_{\xi}).
	\end{equation}
	Throughout this paper, we assume the following non-degenerate assumption.
	\begin{assu}[Fermi's Golden Rule]
		$\gamma >0.$
	\end{assu}
	\begin{rem}
		The Fermi's Golden Rule condition implies that $|\xi|\approx \frac{|\xi_0|}{(1+|\xi_0|^{4N} t)^{\frac{1}{4N}}}$ if we could neglect the error term $2Re(\bar{\xi}\mathcal{R}_{\xi})$, which will be justified in Section \ref{Sec-error estimates} .
	\end{rem}
	
	\section{Asymptotic Behavior}\label{Sec-aymptotic behavior}
	In this section, we derive the asymptotic behavior of $\xi$  and $f$. Before proceeding, we introduce some useful notations. Let $T>0$ be fixed, denote $\langle t\rangle=(1+t^2)^{1/2}$ and 
	$$\begin{gathered}
		{[\xi]_{\frac{1}{4N}}(T)=\sup _{0 \leq t \leq T}\langle |\xi_0|^{4N}t\rangle^{\frac{1}{4N}}|\xi(t)|},
	\end{gathered}$$
	We define the norm $\|\cdot\|_{X}$ as
	$$\|f\|_{X}= \|f\|_{W^{2,2}} + \|f\|_{W^{2,1}}. $$
	
	\subsection{Dynamics of $\xi$}	
	The following theorem allows to treat $\mathcal{R}_{\xi}$ perturbatively in the dynamics of $\xi$:
	\begin{thm}\label{ode}
		Suppose that $\gamma >0$ and the error term $\mathcal{R}_{\xi}$ satisfies $$
		|\mathcal{R}_{\xi}(t)|\le Q_0 (1+4N\gamma|\xi_0|^{4N} t) ^{-\frac{4N+1}{4N}-\delta}$$
		for some small constant $Q_0$ and $\delta >0$, then
		\begin{equation}
			|\xi(t)|^{4N}\le (1+4N\gamma|\xi_0|^{4N} t)^{-1}\left(|\xi_0|^{4N}+\frac{C(\gamma^{-1/2}Q_0^{1/2}|\xi_0|^{\frac{4N-1}{2}}+\gamma^{-\frac{4N}{4N+1}}Q_0^{\frac{4N}{4N+1}})}{(1+4N\gamma|\xi_0|^{4N} t)^{\delta/2}}\right)
		\end{equation}
		for some absolute constant $C$.
		In addition, if $Q_0=O(|\xi_0|^{4N+1+\epsilon})$ and $|\xi_0|$ is sufficiently small, then
		\begin{equation}
			|\xi(t)|^{4N}\ge (1+4N\gamma|\xi_0|^{4N} t)^{-1}\left(|\xi_0|^{4N}-\frac{|\xi_0|^{4N+\epsilon}}{(1+4N\gamma|\xi_0|^{4N} t)^{\delta}}\right).	
		\end{equation}
	\end{thm}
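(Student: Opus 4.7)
The plan is to view \eqref{eq:xi3} as a perturbed scalar ODE for the reciprocal power $\Phi(t):=|\xi(t)|^{-4N}$ and to pass to an integral identity. A direct differentiation using \eqref{eq:xi3} yields
$$\dot\Phi(t)=4N\gamma-4N\,|\xi(t)|^{-4N-2}\operatorname{Re}\bigl(\bar\xi(t)\mathcal{R}_\xi(t)\bigr),$$
so after integrating from $0$ to $t$ and inverting one obtains
$$|\xi(t)|^{4N}=\frac{|\xi_0|^{4N}}{1+4N\gamma|\xi_0|^{4N}t+|\xi_0|^{4N}\mathcal{E}(t)},\qquad |\mathcal{E}(t)|\le 4N\int_0^t|\xi(s)|^{-4N-1}|\mathcal{R}_\xi(s)|\,ds.$$
Consequently the whole theorem reduces to showing that $|\xi_0|^{4N}\mathcal{E}(t)$ is controlled by the small correction stated in the theorem: the upper bound corresponds to a lower bound on the denominator, and vice versa.

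I would then run a continuous bootstrap. On a maximal interval $[0,T^\ast]$ assume the two-sided a priori estimate $\tfrac12|\xi_0|^{4N}(1+4N\gamma|\xi_0|^{4N}s)^{-1}\le|\xi(s)|^{4N}\le 2|\xi_0|^{4N}(1+4N\gamma|\xi_0|^{4N}s)^{-1}$. Under this hypothesis $|\xi(s)|^{-4N-1}$ is bounded by $C|\xi_0|^{-4N-1}(1+4N\gamma|\xi_0|^{4N}s)^{(4N+1)/(4N)}$; the gain $(4N+1)/(4N)$ in the exponent is designed to cancel the main decay of $\mathcal{R}_\xi(s)$ exactly, leaving an integrand of size $Q_0|\xi_0|^{-4N-1}(1+4N\gamma|\xi_0|^{4N}s)^{-\delta}$. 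Changing variables to $\tau=4N\gamma|\xi_0|^{4N}s$ and integrating produces $|\xi_0|^{4N}|\mathcal{E}(t)|\lesssim c\,(1+4N\gamma|\xi_0|^{4N}t)^{\max(0,1-\delta)}$, where $c:=Q_0/(\gamma|\xi_0|^{4N+1})$. Inserting this into the identity for $|\xi(t)|^{4N}$ and expanding the denominator by $(1-x)^{-1}\le 1+2x$ for $x\le 1/2$ yields the upper bound. The two prefactors $\gamma^{-1/2}Q_0^{1/2}|\xi_0|^{(4N-1)/2}$ and $\gamma^{-4N/(4N+1)}Q_0^{4N/(4N+1)}$ appearing in the statement are the dimensionally homogeneous versions of $|\xi_0|^{4N}c^{1/2}$ and $|\xi_0|^{4N}c^{4N/(4N+1)}$, and both arise by applying Young's inequality to the cross term $2|\xi||\mathcal{R}_\xi|$ against the dissipation $\gamma|\xi|^{4N+2}$: the conjugate pair $(2,2)$ dominates when $|\xi|$ is comparable to $|\xi_0|$, while the conjugate pair $(4N+2,(4N+2)/(4N+1))$ dominates in the tail regime $|\xi|\ll|\xi_0|$. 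Stitching the two Young's estimates together gives the single bound in the theorem, and since $c\ll 1$ for small $Q_0$, the estimate for $|\xi(t)|^{4N}$ strictly improves the a priori hypothesis, closing the bootstrap.

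For the lower bound, the strengthened hypothesis $Q_0=O(|\xi_0|^{4N+1+\epsilon})$ forces $c=O(|\xi_0|^{\epsilon})$ to be quantitatively small, so that $|\xi_0|^{4N}|\mathcal{E}(t)|\le|\xi_0|^{\epsilon}(1+4N\gamma|\xi_0|^{4N}t)^{1-\delta/2}$. Consequently the denominator in the identity for $|\xi(t)|^{4N}$ exceeds $1+4N\gamma|\xi_0|^{4N}t$ by at most a factor $1+O\bigl(|\xi_0|^{\epsilon}(1+4N\gamma|\xi_0|^{4N}t)^{-\delta/2}\bigr)$, and inverting gives exactly the stated lower bound, with the announced $|\xi_0|^{4N+\epsilon}/(1+4N\gamma|\xi_0|^{4N}t)^{\delta}$ correction.

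The main obstacle is the self-referential character of the argument: the control of $|\mathcal{E}(t)|$ relies on a pointwise lower bound for $|\xi(s)|$, which is essentially the conclusion. This forces a careful continuity-in-$T^\ast$ argument and a delicate choice of the bootstrap exponent $\delta/2$ so that the error bound $(1+4N\gamma|\xi_0|^{4N}t)^{\max(0,1-\delta)}$, once divided by $(1+4N\gamma|\xi_0|^{4N}t)$, is strictly smaller than the a priori gap at every time. A further subtlety is that the two Young's inequality regimes cannot be collapsed into a single one without losing powers of $c$, so the decomposition of the nonlinear forcing into the two terms $c^{1/2}$ and $c^{4N/(4N+1)}$ is structural rather than cosmetic.
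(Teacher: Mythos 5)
Your approach is genuinely different from the paper's, and essentially correct, but the motivational discussion at the end misattributes where the two prefactors come from and overstates what the argument delivers.

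\textbf{Comparison of methods.} The paper works directly with the differential inequality for $r=|\xi|^{4N}$,
\[
-4N\gamma r^2-4N|\mathcal{R}_\xi|r^{\frac{4N-1}{4N}}\le \dot r\le -4N\gamma r^2+4N|\mathcal{R}_\xi|r^{\frac{4N-1}{4N}},
\]
and constructs an explicit super/sub-solution barrier $h(t)=(1+4N\gamma r_0 t)^{-1}\bigl(r_0+C_0(1+4N\gamma r_0 t)^{-\delta/2}\bigr)$; the constant $C_0$ is then fixed by requiring $4N\gamma C_0^2\ge 8NQ_0 r_0^{(4N-1)/(4N)}+8NQ_0 C_0^{(4N-1)/(4N)}$, and it is this algebraic inequality in $C_0$, not a Young-type split of the forcing against the dissipation, that produces the two prefactors $\gamma^{-1/2}Q_0^{1/2}|\xi_0|^{(4N-1)/2}$ and $\gamma^{-4N/(4N+1)}Q_0^{4N/(4N+1)}$. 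You instead pass to $\Phi=1/r$, integrate the resulting linear-plus-perturbation identity, invert, and run a continuous bootstrap. That is a legitimate and in some ways cleaner route: the mechanism forcing $(1+4N\gamma|\xi_0|^{4N}t)^{-1}$ decay becomes transparent, and the correction appears as an honest integral error $|\xi_0|^{4N}\mathcal{E}(t)$.

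\textbf{Where the write-up goes wrong.} Carried out faithfully, your bootstrap yields a correction of size $c\,T^{-1-\min(1,\delta)}$ with $c=Q_0/(\gamma|\xi_0|^{4N+1})$ and $T=1+4N\gamma|\xi_0|^{4N}t$, i.e.\ \emph{linear} in $c$. The final paragraph of your upper-bound argument then tries to convert this into the theorem's $c^{1/2}$ and $c^{4N/(4N+1)}$ terms by appealing to two Young's inequality regimes, but no Young step actually occurs in your chain of estimates; the discussion there does not correspond to anything you have proved and would not reproduce the $T^{-\delta/2}$ decay. What is actually true is simpler: for $\delta\le 2$ and $c\lesssim 1$ one has $c\le c^{1/2}$, $c\le c^{4N/(4N+1)}$, and $T^{-\min(1,\delta)}\le T^{-\delta/2}$, so your linear-in-$c$ bound \emph{dominates} the theorem's stated bound without any Young step. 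You should simply state that and delete the Young discussion. Two caveats remain and should be made explicit. First, both your argument and the paper's implicitly require $\delta\le 2$ (in the paper this is hidden in the sign of $4N(1-\delta/2)\gamma r_0 C_0$; in yours, in passing from $T^{-\min(1,\delta)}$ to $T^{-\delta/2}$ for $\delta>2$), which is harmless for the application but worth noting. Second, the reduction from linear-in-$c$ to the theorem's form requires $c\lesssim 1$, i.e.\ $Q_0\lesssim\gamma|\xi_0|^{4N+1}$; this is consistent with ``$Q_0$ small'' and with the eventual application where $Q_0\approx|\xi_0|^{4N+1+\delta_1}$, but you should record the assumption. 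With these adjustments the bootstrap argument closes and proves the statement; the lower-bound paragraph is correct as written once the same linear-in-$c$ estimate is used.
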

	\begin{proof}
		We use standard comparison theorem to prove Theorem \ref{ode}. Define $r=|\xi|^{4N}$, by \eqref{eq:xi3} we have
		\begin{equation}\label{r'}
			-4N\gamma r^2-4N|\mathcal{R}_{\xi}|r^{\frac{4N-1}{4N}}	\le r'\le -4N\gamma r^2+4N|\mathcal{R}_{\xi}|r^{\frac{4N-1}{4N}},  	
		\end{equation}		
		Define $$h(t)=(1+4N\gamma r_0 t)^{-1}(r_0+C_0(1+4N\gamma r_0 t)^{-\delta/2}),$$ then
		$$h'(t)=-\frac{4N\gamma r_0^2}{(1+4N\gamma r_0t)^2}-\frac{4N(1+\delta/2)\gamma r_0C_0}{(1+4N\gamma r_0t)^{2+\delta/2}}.$$
		Hence,
		\begin{align*}
			h'+4N\gamma h^2&=\frac{4N(1-\delta/2)\gamma r_0C_0}{(1+4N\gamma r_0t)^{2+\delta/2}}+\frac{4N\gamma C_0^2}{(1+4N\gamma r_0t)^{2+\delta}}\nonumber\\ 
			&\ge \frac{4N\gamma C_0^2}{(1+4N\gamma r_0t)^{2+\delta}}.
		\end{align*}
		Note that 
		\begin{equation*}
			h^{\frac{4N-1}{4N}}\le (1+4N\gamma r_0 t)^{-\frac{4N-1}{4N}}(r_0^{\frac{4N-1}{4N}}+C_0^{\frac{4N-1}{4N}}(1+4N\gamma r_0 t)^{-\frac{(4N-1)\delta}{8N}}),
		\end{equation*} 
		then 
		\begin{equation*}
			4N|\mathcal{R}_{\xi}|h^{\frac{4N-1}{4N}}\le \frac{4N Q_0r_0^{\frac{4N-1}{4N}}+4N Q_0C_0^{\frac{4N-1}{4N}}}{(1+4N\gamma r_0t)^{2+\delta}}.
		\end{equation*}
		Hence, if we choose $C_0$  such that
		\begin{equation}
			4N\gamma C_0^2\ge 8N Q_0r_0^{\frac{4N-1}{4N}}+8N Q_0C_0^{\frac{4N-1}{4N}},\label{C_0}
		\end{equation}
		then 
		\begin{equation}\label{h'}
			h'+4N\gamma h^2> 4N|\mathcal{R}_{\xi}|h^{\frac{4N-1}{4N}}.
		\end{equation} 	
		By comparing \eqref{r'} and \eqref{h'} and note that $r_0\le h_0$, we get $r(t)\le h(t)$ and the desired estimate is proved. To achieve the condition \eqref{C_0}, it suffices to  choose $C_0=C(\gamma^{-1/2}Q_0^{1/2}r_0^{\frac{4N-1}{8N}}+\gamma^{-\frac{4N}{4N+1}}Q_0^{\frac{4N}{4N+1}})$ for some large constant $C$.
		
		For the lower bound of $|\xi|$, note that
		\begin{equation}
			r'\ge -4N\gamma r^2-4N|\mathcal{R}_{\xi}|r^{\frac{4N-1}{4N}}.
		\end{equation}
		Choose 
		$$
		\tilde{h}(t)=(1+4Nr_0\gamma t)^{-1}\left(r_0-r_0^{(1+\epsilon/8N)}(1+4Nr_0\gamma t)^{-\delta}\right),
		$$
		similarly we compute
		\begin{align*}
			\tilde{h}'+4N\gamma \tilde{h}^2&=-\frac{4N\gamma(1-\delta) r_0^{2+\frac{\epsilon}{8N}}}{(1+4N\gamma r_0t)^{2+\delta}}+\frac{4N\gamma r_0^{2+\frac{\epsilon}{4N}}}{(1+4N\gamma r_0t)^{2+2\delta}}\nonumber\\ 
			&\lesssim -\frac{ r_0^{2+\frac{\epsilon}{8N}}}{(1+4N\gamma r_0t)^{2+\delta}},
		\end{align*}
		and
		\begin{align*}
			\tilde{h}^{\frac{4N-1}{4N}}< (1+4N\gamma r_0 t)^{-\frac{4N-1}{4N}}r_0^{\frac{4N-1}{4N}},
		\end{align*}
		hence
		\begin{align*}
			4N|\mathcal{R}_{\xi}|\tilde{h}^{\frac{4N-1}{4N}}\lesssim \frac{ r_0^{2+\frac{\epsilon}{4N}}}{(1+4N\gamma r_0t)^{2+\delta}}.
		\end{align*}
		Therefore if we choose $|\xi_0|$ to be sufficiently small, then 
		\begin{equation}
			\tilde{h}'<-4N\gamma \tilde{h}^2-4N|\mathcal{R}_{\xi}|\tilde{h}^{\frac{4N-1}{4N}}.
		\end{equation}
		Using comparison theorem we get $r\ge \tilde{h}.$
	\end{proof}	
	
	\subsection{Dynamics of $f$}
	By \eqref{f formula}, $f$ can be decomposed as 
	\begin{align*}
		f(t) &=e^{-\mathrm{i}B t}f(0)+\int_{0}^{t}e^{-\mathrm{i}B (t-s)}(-\mathrm{i}\bar{G}-\mathrm{i}\partial_{\bar{f}}\mathcal{R})ds.
	\end{align*}
	Denote
	$$\hat{\mathcal{R}}_f :=  -\mathrm{i} \int_{0}^{t}e^{-\mathrm{i}B (t-s)}\partial_{\bar{f}}\mathcal{R}ds.$$
	\begin{prop}\label{est:f}	
		Let $\sigma> \frac52$, suppose that the error term $\hat{\mathcal{R}}_f$ satisfies 
		\begin{align}
			&\|B^{-1/2} \hat{\mathcal{R}}_f\|_{L^8}\lesssim \langle t \rangle^{-\frac{2N+1}{4N}}\mathcal{P}_1
			+\langle |\xi_0|^{4N}t \rangle^{-\frac{2N+1}{4N}}\mathcal{P}_2
			, \label{assumption RfL8}\\
			&\|\langle x \rangle^{-\sigma}B^{1/2}\hat{\mathcal{R}}_f\|_{L^4}\lesssim \langle t \rangle^{-\frac{1}{2}}\mathcal{P}_3+\langle |\xi_0|^{4N}t \rangle^{-\frac{1}{2}}\mathcal{P}_4, \label{assumption RfL4}
		\end{align}
		for some constants $\mathcal{P}_1,\mathcal{P}_2,\mathcal{P}_3,\mathcal{P}_4$, then the following estimates hold
		\begin{align*}
			&\|B^{-1/2}f\|_{L^8}\lesssim \langle t \rangle^{-\frac{2N+1}{4N}}\Big(\|B^{-1/2}f(0)\|_X+\mathcal{P}_1\Big)
			+\langle |\xi_0|^{4N}t \rangle^{-\frac{2N+1}{4N}}\Big([\xi]^{2N+1}_{\frac{1}{4N}}+\mathcal{P}_2\Big)
			,\\
			&\|\langle x \rangle^{-\sigma}B^{1/2}f\|_{L^4}\lesssim \langle t \rangle^{-\frac{1}{2}}\Big(\|B^{-1/2}f(0)\|_X+\mathcal{P}_3\Big)+\langle |\xi_0|^{4N}t \rangle^{-\frac{1}{2}}\Big([\xi]^{2N+1}_{\frac{1}{4N}}+\mathcal{P}_4\Big).
		\end{align*}
	\end{prop}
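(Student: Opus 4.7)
The plan is to apply Duhamel's formula \eqref{f formula} to split
\begin{align*}
f(t)=e^{-\mathrm{i}Bt}f(0)+\mathcal{I}(t)+\hat{\mathcal{R}}_f(t),\qquad \mathcal{I}(t):=-\mathrm{i}\int_0^t e^{-\mathrm{i}B(t-s)}\bar{G}(s)\,ds,
\end{align*}
and to control each of the three pieces separately in the two target norms. The piece $\hat{\mathcal{R}}_f$ is covered directly by the hypotheses \eqref{assumption RfL8}--\eqref{assumption RfL4}, so the real work concerns the free evolution and the source integral. For the free evolution $e^{-\mathrm{i}Bt}f(0)$ I would commute $B^{\pm 1/2}$ past the propagator and invoke Lemma \ref{Lp-Lp'}. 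For the $L^8$ estimate, the choice $l=0,\,p'=8,\,\theta=1$ produces $\langle t\rangle^{-9/8}\|B^{-1/2}f(0)\|_X$, and since $(2N+1)/(4N)\le 3/4<9/8$ for every $N\ge 1$ this subsumes the claimed rate. For the weighted $L^4$ estimate, an application of Lemma \ref{Lp-Lp'} with $p'=4,\,\theta=0$ gives $\langle t\rangle^{-1/2}$ decay, and the conversion between $B^{1/2}f(0)$ and $B^{-1/2}f(0)$ costs exactly two derivatives, which are present in the definition of $X=W^{2,2}\cap W^{2,1}$ via the identity $f(0)=B^{1/2}(B^{-1/2}f(0))$ together with the mapping $B^{1/2}\colon W^{k,p}\to W^{k-1,p}$.

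For the source piece $\mathcal{I}(t)$ the key structural fact is that $\bar{G}=\sum_{(\mu,\nu)\in M}\bar{\xi}^{\mu}\xi^{\nu}\bar{\Phi}_{\mu\nu}$ with $\bar{\Phi}_{\mu\nu}\in\mathcal{S}(\bR^3,\bC)$, and the resonance constraint $\omega(\mu-\nu)<-m$ combined with $(2N-1)\omega<m<(2N+1)\omega$ and the parity of $\mu+\nu$ forces $\nu-\mu\ge 2N+1$ and hence $\mu+\nu\ge 2N+1$. Consequently every monomial obeys the uniform pointwise bound $|\xi(s)|^{\mu+\nu}\lesssim [\xi]^{2N+1}_{1/(4N)}\langle|\xi_0|^{4N}s\rangle^{-(2N+1)/(4N)}$, and the Schwartz character of $\bar{\Phi}_{\mu\nu}$ makes every (weighted) Sobolev norm of $\bar{G}(s)$ controlled by this single scalar factor. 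Applying Lemma \ref{Lp-Lp'} in the $L^8$ case and, in the spirit of the introduction, placing the weight $\langle x\rangle^{\sigma}$ on $\bar{G}$ and applying Lemma \ref{Weighted-Lp-Lp'} in the weighted $L^4$ case produces
\begin{align*}
\|B^{-1/2}\mathcal{I}(t)\|_{L^8}&\lesssim [\xi]^{2N+1}_{1/(4N)}\int_0^t\min\bigl\{(t-s)^{-9/8},(t-s)^{-3/8}\bigr\}\langle|\xi_0|^{4N}s\rangle^{-(2N+1)/(4N)}\,ds,\\
\|\langle x\rangle^{-\sigma}B^{1/2}\mathcal{I}(t)\|_{L^4}&\lesssim [\xi]^{2N+1}_{1/(4N)}\int_0^t\min\bigl\{(t-s)^{-5/4},(t-s)^{-1/2}\bigr\}\langle|\xi_0|^{4N}s\rangle^{-(2N+1)/(4N)}\,ds.
\end{align*}

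The main obstacle is the sharp evaluation of these two time convolutions against the target rates $\langle|\xi_0|^{4N}t\rangle^{-(2N+1)/(4N)}$ and $\langle|\xi_0|^{4N}t\rangle^{-1/2}$. I would split each $\int_0^t$ into the three intervals $[0,t/2],\ [t/2,t-1],\ [t-1,t]$, bound the dispersive kernel by the regime appropriate to each, and use the elementary fact that $\int_0^{t/2}\langle|\xi_0|^{4N}s\rangle^{-\gamma}ds\lesssim |\xi_0|^{-4N}\langle|\xi_0|^{4N}t\rangle^{1-\gamma}$ for $\gamma=(2N+1)/(4N)<1$. The delicate regime is the weak resonance case $N>1$, where $\gamma=1/2+1/(4N)$ is only slightly greater than $1/2$ and the tail from $[0,t/2]$ grows almost like $t^{1/2}$; the $L^8$ kernel $(t-s)^{-9/8}$ absorbs this growth comfortably, but for the weighted $L^4$ bound the target rate $\langle|\xi_0|^{4N}t\rangle^{-1/2}$ can be reached only because the weighted estimate upgrades the dispersive kernel from $(t-s)^{-9/8}$ to $(t-s)^{-5/4}$. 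This is exactly the spatial-decay-for-time-decay exchange highlighted in the outline of the proof, and it is the mechanism that rescues the weak-resonance regime. Once the two convolution bounds are in place, assembling the three pieces with the assumed decay of $\hat{\mathcal{R}}_f$ yields the proposition.
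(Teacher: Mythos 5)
Your proof is correct and takes essentially the same route as the paper: the decomposition $f = e^{-\mathrm{i}Bt}f(0) + \mathcal{I}(t) + \hat{\mathcal{R}}_f$, the derivation $\mu+\nu\ge 2N+1$ from the frequency constraint, the application of Lemma \ref{Lp-Lp'} for $L^8$ and Lemma \ref{Weighted-Lp-Lp'} for the weighted $L^4$ piece, and the convolution evaluation (which the paper packages as Lemma \ref{convo1}). The only slight wrinkle is the phrase ``costs exactly two derivatives'': going from $B^{-1/2}f(0)$ to $B^{1/2}f(0)$ costs one derivative via $B$, and Lemma \ref{Lp-Lp'} contributes the Sobolev exponent $s=(4+\theta)(\tfrac12-\tfrac1{p'})$, so the total of two derivatives in $X=W^{2,2}\cap W^{2,1}$ is what closes the $L^8$ case with $s=15/8<2$ and the $L^4$ case with $s=1$ plus the one from $B$; the bookkeeping works out but the ``exactly two'' attribution was a bit loose.
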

	
	\begin{proof}
		By Lemma \ref{Lp-Lp'} (choosing $l=0, p'=8, \theta=1$), we have
		\begin{align*}
			\|B^{-1/2}e^{-\mathrm{i}B t}f(0)\|_{L^8} \lesssim \langle t\rangle^{-9/8}\|B^{-1/2}f(0)\|_{W^{2,\frac87}}\lesssim \langle t\rangle^{-9/8}\|B^{-1/2}f(0)\|_{X}. 
		\end{align*}
		Similarly, (choosing $l=0, p'=4, \theta=0$)
		\begin{align*}
			\|B^{1/2}e^{-\mathrm{i}B t}f(0)\|_{L^4} \lesssim \langle t\rangle^{-1/2}\|B^{1/2}f(0)\|_{W^{1,\frac43}}\lesssim \langle t\rangle^{-1/2}\|B^{-1/2}f(0)\|_{X}. 
		\end{align*}
		Thus, it suffices to prove that 
		\begin{align}\label{esti:G1}
			\left\|B^{-1/2}\int_{0}^{t}e^{-\mathrm{i}B (t-s)}\bar{G}ds\right\|_{L^8} \lesssim \langle |\xi_0|^{4N}t \rangle^{-\frac{2N+1}{4N}}[\xi]^{2N+1}_{\frac{1}{4N}},
		\end{align}
		and
		\begin{align}\label{esti:G2}
			\left\|\langle x \rangle^{-\sigma}B^{1/2}\int_{0}^{t}e^{-\mathrm{i}B (t-s)}\bar{G}ds\right\|_{L^4} \lesssim \langle |\xi_0|^{4N}t \rangle^{-\frac{2N+1}{4N}}[\xi]^{2N+1}_{\frac{1}{4N}}.
		\end{align}
		Note that by definition,
		$$
		G=\sum_{(\mu, \nu) \in M} \xi^{\mu} \bar{\xi}^{\nu} \Phi_{\mu \nu}(x), \Phi_{\mu \nu} \in \mathcal{S}\left(\mathbb{R}^{3}, \mathbb{C}\right),
		$$
		with 
		$
		M=\{(\mu, \nu)\mid \mu +\nu=2r+1, 0 \leq r \leq 2 N, \omega(\mu-\nu)<-m\} .$ Since $(2N-1)\omega<m<(2N+1)\omega$, we have 
		$$\mu+\nu\ge 2N+1,$$
		which implies \eqref{esti:G1} and \eqref{esti:G2} by using Lemma \ref{convo1} in Appendix.
	\end{proof}
	
	\section{Error Estimates}\label{Sec-error estimates}
	In this section, we estimate error terms $\hat{\mathcal{R}}_{f}$ and $\mathcal{R}_{\xi}$ by using the asymptotic behavior of $\xi$ and $f$. Once this was achieved, we can use bootstrap arguments to finish our proof, see Section \ref{Sec-proof of main result}. To proceed, we assume that there exist positive constants $A_f(T), B_f(T), C_f(T), D_f(T)$ such that for $0\le t\le T,$ it holds that
	\begin{align}
		&\|B^{-1/2}f\|_{L^8}\lesssim \langle t \rangle^{-\frac{2N+1}{4N}}A_f(T)
		+\langle |\xi_0|^{4N}t \rangle^{-\frac{2N+1}{4N}}B_f(T),\label{assumption f1}\\
		&\|\langle x \rangle^{-\sigma}B^{1/2}f\|_{L^4}\lesssim \langle t \rangle^{-\frac{1}{2}}C_f(T)+\langle |\xi_0|^{4N}t \rangle^{-\frac{1}{2}}D_f(T).\label{assumption f2}
	\end{align}
	Recall that
	$$\hat{\mathcal{R}}_f =  -\mathrm{i} \int_{0}^{t}e^{-\mathrm{i}B (t-s)}\partial_{\bar{f}}\mathcal{R}ds,$$
	we first prove estimates of $\partial_{\bar{f}}\mathcal{R}=\sum_{d=1}^{5} \partial_{\bar{f}}\mathcal{R}_{d}.$
	\begin{lem}\label{est:partial f of R 1}The following estimates hold:
		\begin{align*}
			\|B^{-1/2}\partial_{\bar{f}}\mathcal{R}_{1}\|_{W^{2,8/7}}&\lesssim \langle |\xi_0|^{4N}t \rangle^{-\frac{4N+3}{4N}}[\xi]^{4N+3}_{\frac{1}{4N}},  \\
			\|B^{-1/2}\partial_{\bar{f}}\mathcal{R}_{2}\|_{W^{2,8/7}}&\lesssim \langle |\xi_0|^{4N}t \rangle^{-\frac{2}{4N}}[\xi]^{2}_{\frac{1}{4N}}\Big(\|B^{-1/2}f\|_{L^8}+\|\langle x \rangle^{-\sigma}B^{1/2}f\|_{L^4}\Big) ,\\
			\|B^{-1/2}\partial_{\bar{f}}\mathcal{R}_{3}\|_{W^{2,8/7}}&\lesssim \langle |\xi_0|^{4N}t \rangle^{-\frac{1}{4N}} [\xi]_{\frac{1}{4N}}\|B^{-1/2}f\|_{L^8}\Big(\|B^{-1/2}f\|_{L^8}+\|\langle x \rangle^{-\sigma}B^{1/2}f\|_{L^4}\Big),\\
			\|B^{-1/2}\partial_{\bar{f}}\mathcal{R}_{4}\|_{W^{2,8/7}}&\lesssim  \|B^{1/2}f\|_{L^2}^{4/3}\|B^{-1/2}f\|_{L^8}^{5/3}\\
			\|B^{-1/2}\partial_{\bar{f}}\mathcal{R}_{5}\|_{W^{2,8/7}}&\lesssim
			\langle |\xi_0|^{4N}t \rangle^{-\frac{M^\star}{4N}}[\xi]^{M^\star}_{\frac{1}{4N}} .
		\end{align*}
	\end{lem}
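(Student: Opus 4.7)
The plan is to directly exploit the explicit structural formulas for $\mathcal{R}_d^{(r)}$ established in Theorem \ref{thm:nft} (specialized to $r=2N$), compute $\partial_{\bar f}\mathcal{R}_d$ term by term, and then estimate in $W^{2,8/7}$ by combining H\"older's inequality, the pointwise bound $|\xi(t)|\le \langle |\xi_0|^{4N}t\rangle^{-\frac{1}{4N}}[\xi]_{\frac{1}{4N}}$ implied by the definition of $[\xi]_{\frac{1}{4N}}$, and the identity $U = B^{-1/2}(f+\bar f)/\sqrt 2$ that relates $U$ to $f$. The recurring observation that makes everything work is that every coefficient function appearing in \eqref{eq:R1-coef}, \eqref{eq:Rd-coef-F}, \eqref{eq:Rd-coef-lambda} is Schwartz in $x$, so weights $\langle x\rangle^{\sigma}$ can be freely absorbed into these coefficients and the spatially weighted norm $\|\langle x\rangle^{-\sigma}B^{1/2}f\|_{L^4}$ becomes available on the right-hand side without loss.

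For $\mathcal{R}_1$, structure \eqref{eq:R1-coef} shows that $\partial_{\bar f}\mathcal{R}_1$ is a Schwartz function multiplied by $\xi^{\mu}\bar\xi^{\nu}$ with $\mu+\nu=2r+3=4N+3$ (plus higher-order tail in $|\xi|$). Applying $B^{-1/2}$ and using Schwartz regularity gives finiteness in $W^{2,8/7}$, while the monomial factor produces the claimed $\langle|\xi_0|^{4N}t\rangle^{-\frac{4N+3}{4N}}[\xi]^{4N+3}_{\frac{1}{4N}}$ decay. The bound on $\mathcal{R}_5$ follows at once from Theorem \ref{thm:nft}(iii.5) once $|\xi|^{M^\star}$ is replaced by the pointwise bound in terms of $[\xi]_{\frac{1}{4N}}$.

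For $d=2,3$, the expansions \eqref{eq:Rd}, \eqref{eq:Rd-coef-F}, \eqref{eq:Rd-coef-lambda} show that $\partial_{\bar f}\mathcal{R}_d$ is a finite sum of terms containing a Schwartz profile multiplied by $U^{d-1}$ and by a monomial $\xi^{\mu}\bar\xi^{\nu}$ with $\mu+\nu\ge 4-d$, together with further factors $\prod\int \Lambda\cdot \mathbf{f}\,dx$ that only improve the bounds. I will split $\|B^{-1/2}(\Phi\cdot U^{d-1})\|_{W^{2,8/7}}$ by H\"older with one $U$-factor placed against $\Phi\langle x\rangle^{\sigma}$ in a Lebesgue dual norm to produce $\|\langle x\rangle^{-\sigma}B^{1/2}f\|_{L^4}$ (after using $U = B^{-1/2}(f+\bar f)/\sqrt 2$ and transferring a power of $B$ to $f$ against the Schwartz weight), while the remaining $U$-factors are controlled by $\|B^{-1/2}f\|_{L^8}$. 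The $W^{2,8/7}$ smoothing for $B^{-1/2}$ composed with a Schwartz operator is standard, and the $\xi$-prefactor supplies the asserted decay $\langle|\xi_0|^{4N}t\rangle^{-\frac{2}{4N}}[\xi]^{2}_{\frac{1}{4N}}$ and $\langle|\xi_0|^{4N}t\rangle^{-\frac{1}{4N}}[\xi]_{\frac{1}{4N}}$.

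The hardest case is $\mathcal{R}_4$, since the top-order piece $\int U^4\,dx$ carries no $\xi$-prefactor and all decay must come from the three surviving $U$-factors in $\partial_{\bar f}\mathcal{R}_4$. Here I plan to write $B^{-1/2}\partial_{\bar f}\mathcal{R}_4 = c\,B^{-1}(U^3)$, reduce $W^{2,8/7}$ to $W^{1,8/7}$ using the one-derivative gain of $B^{-1}$, expand $\nabla(U^3)=3U^2\nabla U$, and balance exponents by the interpolation $\|U\|_{L^{24/7}}\le \|U\|_{L^{8}}^{5/9}\|U\|_{L^{2}}^{4/9}$, which is precisely what produces the $5/3$ and $4/3$ powers on the right-hand side once $\|U\|_{L^8}\lesssim \|B^{-1/2}f\|_{L^{8}}$ and $\|U\|_{L^2}\lesssim \|B^{1/2}f\|_{L^2}$ are invoked. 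The main obstacle will be juggling weights, H\"older exponents, and derivative counts simultaneously across the three $U$-factors in the low-integrability $L^{8/7}$ setting, and checking that the non-leading contributions coming from the sum $\sum_k\prod_{l=1}^4\int\Lambda_{4lk}\cdot\mathbf{f}\,dx$ in \eqref{eq:Rd} are dominated by the same right-hand side via repeated use of the Schwartz localization of the $\Lambda_{4lk}^{(r)}$ and the monomial structure in $\xi$.
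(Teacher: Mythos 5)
Your proposal follows essentially the same route as the paper: invoke the structural decomposition of $\mathcal{R}_d^{(2N)}$ from Theorem~\ref{thm:nft}, differentiate in $\bar f$, count the $\xi$-monomial order and the $U$-factors, absorb $\langle x\rangle^{\sigma}$ into the Schwartz profiles so that the weighted norm $\|\langle x\rangle^{-\sigma}B^{1/2}f\|_{L^4}$ becomes usable, and handle $\mathcal{R}_4$ separately by writing $B^{-1/2}\partial_{\bar f}\mathcal{R}_4 = c\,B^{-1}(U^3)$, trading the two extra derivatives of $W^{2,8/7}$ against $B^{-1}$, and interpolating between $L^8$ and $L^2$. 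One small inaccuracy to flag: your interpolation $\|U\|_{L^{24/7}}\le \|U\|_{L^8}^{5/9}\|U\|_{L^2}^{4/9}$ handles only the $L^{8/7}$ layer of $W^{1,8/7}$; the $\nabla(U^3)$ layer needs the slightly different interpolation $\|U\|_{L^{16/3}}\le\|U\|_{L^2}^{1/6}\|U\|_{L^8}^{5/6}$ after placing $\nabla U$ in $L^2$, and what actually produces $\|B^{1/2}f\|_{L^2}^{4/3}$ is the Sobolev-level norm $\|B^{-1/2}f\|_{W^{1,2}}\approx\|B^{1/2}f\|_{L^2}$, not merely $\|U\|_{L^2}\lesssim\|B^{1/2}f\|_{L^2}$. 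This is a bookkeeping point, not a conceptual gap, and the exponents close out to $4/3$ and $5/3$ exactly as the paper computes. Your extra attention to the $\sum_k\prod_l\int\Lambda_{4lk}\cdot\mathbf f\,dx$ remainder in $\mathcal{R}_4$ (which the paper silently drops) is reasonable diligence and bounded the same way you describe.
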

	
	\begin{proof}
		The estimates of $\mathcal{R}_{1}$ and $\mathcal{R}_{5}$ are direct. For $\mathcal{R}_{d}, d=2,3,$ by Theorem \ref{thm:nft}, $F^{(2N)}_d(x,z),$ are linear combinations of terms of the form $\xi^{\mu}\bar{\xi}^{\nu}\prod_j^i\int \mathbf{\Phi}_{j}\cdot\mathbf{f}dx\Psi,$
		where $\mu+\nu\ge 4-d+2k-i, 0\le i\le k, \mathbf{\Phi}_{j},\Psi \in \mathcal{S}\left(\mathbb{R}^{3}, \mathbb{C}\right).$ And  $ \Lambda^{(2N)}_{dlk}(x,z)$ are linear combinations of terms of the form $\xi^{\mu}\bar{\xi}^{\nu}\prod_j^i\int \mathbf{\Phi}_{j}\cdot\mathbf{f}dx\Psi,$
		where $\mu+\nu\ge 1+2k-i, 0\le i\le k, \mathbf{\Phi}_{j},\Psi \in \mathcal{S}\left(\mathbb{R}^{3}, \mathbb{C}\right).$ Hence, $\partial_{\bar{f}}\mathcal{R}_{d}$ are linear combinations of terms of forms		
		$$\xi^{\mu}\bar{\xi}^{\nu}\prod_j^i\int \mathbf{\Phi}_{j} \cdot\mathbf{f}dx \int \Psi U^{d}dx \Psi',\quad  \mu+\nu\ge 4-d+2(k+1)-i, 0\le i\le k, $$
		or
		$$\xi^{\mu}\bar{\xi}^{\nu}\prod_j^i\int \mathbf{\Phi}_{j} \cdot\mathbf{f}dx B^{-1/2}\left(\Psi U^{d-1}\right), \quad  \mu+\nu\ge 4-d+2k-i, 0\le i\le k,$$
		or
		$$\xi^{\mu}\bar{\xi}^{\nu}\prod_j^{d-1+i}\int \mathbf{\Phi}_{j} \cdot\mathbf{f}dx \Psi, \quad  \mu+\nu\ge 4-d+2k-i, 0 \le i\le k.$$
		In each form, $\mu+\nu\ge 4-d$ and $f$ has at least order $d-1.$ Then the rest follows directly by H\"older's inequality and Leibnitz rule. For $d=4,$ we have
		\begin{align*}
			&\|B^{-1}\left(\left(B^{-1/2}f+B^{-1/2}\bar{f}\right)^3\right)\|_{W^{2,8/7}}\\
			\lesssim &	\|\left(B^{-1/2}f+B^{-1/2}\bar{f}\right)^3\|_{W^{1,8/7}}\\
			\lesssim &   \|B^{-1/2}f\|_{W^{1,2}}^{4/3}\|B^{-1/2}f\|_{L^8}^{5/3}.
		\end{align*}	
	\end{proof}
	\begin{cor}
		Assuming \eqref{assumption f1} and \eqref{assumption f2}, it holds that
		\begin{align*}
			\|B^{-1/2}\partial_{\bar{f}}\mathcal{R}_{1}\|_{W^{2,8/7}}&\lesssim \langle |\xi_0|^{4N}t \rangle^{-\frac{4N+3}{4N}}[\xi]^{4N+3}_{\frac{1}{4N}},  \\
			\|B^{-1/2}\partial_{\bar{f}}\mathcal{R}_{2}\|_{W^{2,8/7}}&\lesssim \langle |\xi_0|^{4N}t \rangle^{-\frac{1}{4N}}\langle t \rangle^{-\frac{2N+1}{4N}}[\xi]^{2}_{\frac{1}{4N}}\Big(A_f+|\xi_{0}|^{-1}C_f\Big)+\langle |\xi_0|^{4N}t \rangle^{-\frac{2N+2}{4N}}[\xi]^{2}_{\frac{1}{4N}}\big(B_f+D_f\big) ,\\
			\|B^{-1/2}\partial_{\bar{f}}\mathcal{R}_{3}\|_{W^{2,8/7}}&\lesssim \langle |\xi_0|^{4N}t \rangle^{-\frac{1}{4N}}\langle t \rangle^{-\frac{2N+1}{4N}}[\xi]_{\frac{1}{4N}}A_f\big(A_f+C_f+D_f\big)+\langle |\xi_0|^{4N}t \rangle^{-\frac{2N+2}{4N}}[\xi]_{\frac{1}{4N}}B_f\big(B_f+C_f+D_f\big),\\
			\|B^{-1/2}\partial_{\bar{f}}\mathcal{R}_{4}\|_{W^{2,8/7}}&\lesssim  \langle t \rangle^{-\frac{2N+2}{4N}}\|B^{1/2}f\|_{L^2}^{4/3}A_f^{5/3}+\langle |\xi_0|^{4N}t \rangle^{-\frac{2N+2}{4N}}\|B^{1/2}f\|_{L^2}^{4/3}B_f^{5/3},\\
			\|B^{-1/2}\partial_{\bar{f}}\mathcal{R}_{5}\|_{W^{2,8/7}}&\lesssim
			\langle |\xi_0|^{4N}t \rangle^{-\frac{M^\star}{4N}}[\xi]^{M^\star}_{\frac{1}{4N}} .
		\end{align*}
	\end{cor}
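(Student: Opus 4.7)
\medskip

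\noindent\textbf{Proof sketch (proposal).}
The plan is to obtain the corollary as a direct substitution of the hypotheses \eqref{assumption f1}--\eqref{assumption f2} into the corresponding lines of Lemma \ref{est:partial f of R 1}, followed by regrouping of terms according to the two decay scales $\langle t\rangle$ and $\langle|\xi_0|^{4N}t\rangle$ that are available. Since the bounds of the lemma for $\mathcal{R}_1$ and $\mathcal{R}_5$ do not involve $f$, those two lines of the corollary transfer verbatim; no work is needed there.

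For $\mathcal{R}_2$ I would plug the pointwise sum of \eqref{assumption f1} and \eqref{assumption f2} into the factor $\|B^{-1/2}f\|_{L^8}+\|\langle x\rangle^{-\sigma}B^{1/2}f\|_{L^4}$ multiplying the prefactor $\langle|\xi_0|^{4N}t\rangle^{-2/(4N)}[\xi]_{1/(4N)}^{2}$, producing four cross-terms carrying $A_f,B_f,C_f,D_f$ respectively. The $A_f,B_f,D_f$ contributions fall immediately into the stated shape by using only $\langle|\xi_0|^{4N}t\rangle^{-(2N+3)/(4N)}\le\langle|\xi_0|^{4N}t\rangle^{-(2N+2)/(4N)}$ together with the monotonicity $\langle|\xi_0|^{4N}t\rangle^{-2/(4N)}\le\langle|\xi_0|^{4N}t\rangle^{-1/(4N)}$. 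The bound for $\mathcal{R}_3$ is obtained by the same recipe after substituting once for the single $\|B^{-1/2}f\|_{L^8}$ factor and once for $\|B^{-1/2}f\|_{L^8}+\|\langle x\rangle^{-\sigma}B^{1/2}f\|_{L^4}$, expanding the resulting four products, and then collapsing each mixed term onto one scale via $\langle t\rangle^{-a}\langle|\xi_0|^{4N}t\rangle^{-b}\le\langle t\rangle^{-(a+b)}$ or its analogue with the two weights swapped.

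The only delicate step, which I expect to be the main (and really only) obstacle, is the treatment of the $C_f$ term in $\mathcal{R}_2$ and its parallel in $\mathcal{R}_3$. The naive product is $\langle|\xi_0|^{4N}t\rangle^{-2/(4N)}\langle t\rangle^{-1/2}C_f$, while the stated bound carries the sharper temporal decay $\langle t\rangle^{-(2N+1)/(4N)}$ together with an extra factor of $|\xi_0|^{-1}$. Comparing the two reduces, after canceling common powers, to the elementary pointwise inequality
\[
|\xi_0|\,\langle t\rangle^{1/(4N)}\lesssim \langle|\xi_0|^{4N}t\rangle^{1/(4N)},
\]
which I would verify by splitting into the regime $|\xi_0|^{4N}t\ge 1$ (where the right-hand side is comparable to $|\xi_0|\,t^{1/(4N)}$ and the estimate is trivial for $t\ge 1$) and the regime $|\xi_0|^{4N}t<1$ (where $\langle t\rangle\lesssim |\xi_0|^{-4N}$ forces the left-hand side to be $\lesssim 1$). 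Finally, for $\mathcal{R}_4$ I would apply $(a+b)^{5/3}\lesssim a^{5/3}+b^{5/3}$ to the $5/3$ power of $\|B^{-1/2}f\|_{L^8}$ coming from the lemma, then absorb the excess decay via $\tfrac{5(2N+1)}{12N}\ge\tfrac{2N+2}{4N}$ (which holds for every $N\ge 1$), yielding the two stated contributions with $A_f^{5/3}$ and $B_f^{5/3}$.
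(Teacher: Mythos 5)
Your proposal is correct and follows the paper's own one-line argument: direct substitution of \eqref{assumption f1}--\eqref{assumption f2} into Lemma \ref{est:partial f of R 1} and regrouping, with the single nontrivial tool being exactly the inequality you isolate, $|\xi_0|\langle t\rangle^{1/(4N)}\lesssim\langle|\xi_0|^{4N}t\rangle^{1/(4N)}$, which is the fact the paper records (in the equivalent form $\langle|\xi_0|^{4N}t\rangle^{-1/(4N)}|\xi_0|\le\langle t\rangle^{-1/(4N)}$) and which is used to produce the $|\xi_0|^{-1}C_f$ factor in the $\mathcal{R}_2$ line. Two small cautions: the auxiliary collapse $\langle t\rangle^{-a}\langle|\xi_0|^{4N}t\rangle^{-b}\le\langle t\rangle^{-(a+b)}$ is backwards for small $|\xi_0|$ (since $\langle|\xi_0|^{4N}t\rangle\le\langle t\rangle$) and only the swapped-weight version you also mention is valid, and the $|\xi_0|^{-1}$ phenomenon has no parallel in the $\mathcal{R}_3$ bound, where the $C_f$, $D_f$ cross terms are handled by simply dropping excess decay (and the $A_fB_f$ cross terms by a trivial Young's inequality $A_fB_f\le A_f^2+B_f^2$), with no $|\xi_0|^{-1}$ factor appearing.
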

	\begin{proof}
		The proof is direct by substituting \eqref{assumption f1} and \eqref{assumption f2} into Lemma \ref{est:partial f of R 1}, we have used the fact that $\langle |\xi_0|^{4N}t \rangle^{-\frac{1}{4N}}|\xi_{0}|\le \langle t \rangle^{-\frac{1}{4N}}$.
	\end{proof}
	Similarly, we have the following lemma. The proof also relies on the explicit forms of $\partial_{\bar{f}}\mathcal{R}_{d}$ and standard H\"older's inequality. Here we omit the details.
	\begin{lem}\label{est:partial f of R 2} Let $\sigma> \frac52$, the following estimates hold:
		\begin{align*}
			&\|\langle x \rangle^{\sigma}B^{1/2}\partial_{\bar{f}}\mathcal{R}_{1}\|_{W^{1,4/3}}\lesssim \langle |\xi_0|^{4N}t \rangle^{-\frac{4N+3}{4N}}[\xi]^{4N+3}_{\frac{1}{4N}}  ,\\
			&\|\langle x \rangle^{\sigma}B^{1/2}\partial_{\bar{f}}\mathcal{R}_{2}\|_{W^{1,4/3}}\lesssim\langle |\xi_0|^{4N}t \rangle^{-\frac{2}{4N}}[\xi]^{2}_{\frac{1}{4N}}\Big(\|B^{-1/2}f\|_{L^8}+\|\langle x \rangle^{-\sigma}B^{1/2}f\|_{L^4}\Big) ,\\
			&\|\langle x \rangle^{\sigma}B^{1/2}\partial_{\bar{f}}\mathcal{R}_{3}\|_{W^{1,4/3}}\lesssim \langle |\xi_0|^{4N}t \rangle^{-\frac{1}{4N}} [\xi]_{\frac{1}{4N}}\|B^{-1/2}f\|_{L^8}\Big(\|B^{-1/2}f\|_{L^8}+\|\langle x \rangle^{-\sigma}B^{1/2}f\|_{L^4}\Big) ,\\
			&\|B^{1/2}\partial_{\bar{f}}\mathcal{R}_{4}\|_{W^{1,4/3}}\lesssim \|B^{1/2}f\|_{L^2}\|B^{-1/2}f\|_{L^8}^{2},\\
			&\|\langle x \rangle^{\sigma}B^{1/2}\partial_{\bar{f}}\mathcal{R}_{5}\|_{W^{1,4/3}}\lesssim
			\langle |\xi_0|^{4N}t \rangle^{-\frac{M^\star}{4N}}[\xi]^{M^\star}_{\frac{1}{4N}},
		\end{align*}
	\end{lem}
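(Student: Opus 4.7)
The plan is to mirror the proof of Lemma \ref{est:partial f of R 1}, replacing $\|B^{-1/2}(\cdot)\|_{W^{2,8/7}}$ by $\|\langle x\rangle^\sigma B^{1/2}(\cdot)\|_{W^{1,4/3}}$ (and, for $d=4$, by the unweighted $\|B^{1/2}(\cdot)\|_{W^{1,4/3}}$), and to exploit the fact that every outer spatial coefficient appearing in $\partial_{\bar f}\mathcal{R}_d$ for $d=1,2,3,5$ is Schwartz class, so multiplication by $\langle x\rangle^\sigma$ keeps it Schwartz. First I would expand $\partial_{\bar f}\mathcal{R}_d$ explicitly using Theorem \ref{thm:nft}: differentiating the three types of summands in \eqref{eq:Rd}--\eqref{eq:Rd-coef-lambda} with respect to $\bar f$ produces, modulo $\mathcal{O}(|\xi|^{M^\star})$ tails, a finite linear combination of terms of the schematic form
\begin{align*}
\xi^\mu\bar\xi^\nu\prod_{j=1}^{i}\!\!\int\mathbf\Phi_j\!\cdot\!\mathbf f\,dx\cdot\Psi(x)\,U^{d-1}(x),\quad
\xi^\mu\bar\xi^\nu\prod_{j=1}^{i}\!\!\int\mathbf\Phi_j\!\cdot\!\mathbf f\,dx\!\int\!\Psi U^{d}dx\,\Psi'(x),\quad
\xi^\mu\bar\xi^\nu\prod_{j=1}^{d-1+i}\!\!\int\mathbf\Phi_j\!\cdot\!\mathbf f\,dx\,\Psi(x),
\end{align*}
where all $\mathbf\Phi_j,\Psi,\Psi'\in\mathcal{S}(\mathbb{R}^3,\mathbb{C})$, with the same lower bounds on $\mu+\nu$ as in the proof of Lemma \ref{est:partial f of R 1}. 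The case $d=1$ is similar but simpler (no $U$ factor), and $d=5$ is absorbed into the $\mathcal{O}(|\xi|^{M^\star})$ tail directly via Theorem \ref{thm:nft}(iii.5).

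Next I would apply $\langle x\rangle^\sigma B^{1/2}$ to each such term and estimate the $W^{1,4/3}$ norm. For the purely Schwartz outer factor $\Psi$, the function $\langle x\rangle^\sigma B^{1/2}\Psi$ remains in any $W^{1,p}$ by functional calculus and the rapid decay of $\Psi$; the same holds for $\langle x\rangle^\sigma B^{1/2}\Psi'$. For the terms containing the $U^{d-1}$ factor (only $d=2,3$), I would distribute the derivative via the Leibniz rule and write $\langle x\rangle^\sigma\Psi$ as a new Schwartz function, then bound the resulting products by Hölder in $L^{4/3}=L^{8/5}\cdot L^8$ or $L^{4/3}=L^{2}\cdot L^4\cdot L^8$ depending on how many $U$'s appear, using $\|U\|_{L^8}\lesssim\|B^{-1/2}f\|_{L^8}$ and $\|\langle x\rangle^{-\sigma}U\|_{L^4}\lesssim\|\langle x\rangle^{-\sigma}B^{1/2}f\|_{L^4}+\|B^{-1/2}f\|_{L^8}$ (the latter up to harmless commutators with $B^{-1/2}$ on Schwartz-weighted functions). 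Each integral factor $\int\mathbf\Phi_j\cdot\mathbf f\,dx$ is bounded by $\|B^{-1/2}f\|_{L^8}$ via Hölder against $\|B^{1/2}\mathbf\Phi_j\|_{L^{8/7}}\lesssim 1$. The prefactor $|\xi|^{\mu+\nu}$ is converted to $\langle|\xi_0|^{4N}t\rangle^{-(\mu+\nu)/(4N)}[\xi]_{1/(4N)}^{\mu+\nu}$ using the definition of the semi-norm, and the decay rate claimed in the lemma follows from the bound $\mu+\nu\geq 4-d$ already established in Lemma \ref{est:partial f of R 1}.

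The case $d=4$ is treated separately, without weight. Here $\mathcal{R}_4$ contains the concrete term $\int U^4dx$ and product terms that are controlled analogously to Lemma \ref{est:partial f of R 1}. The key computation is $B^{1/2}\partial_{\bar f}(\int U^4 dx)=B^{1/2}\cdot B^{-1/2}(U^3)/\sqrt 2 = U^3/\sqrt 2$ on the range of $P_c$, and then the Leibniz rule and Hölder give
\begin{equation*}
\|U^3\|_{W^{1,4/3}}\lesssim \|U\|_{W^{1,2}}\|U\|_{L^8}^{2}\lesssim \|B^{1/2}f\|_{L^2}\|B^{-1/2}f\|_{L^8}^2,
\end{equation*}
which is exactly the stated bound; the remaining product terms from $\partial_{\bar f}\mathcal{R}_4$ are handled exactly as in the weighted cases and are of the same or smaller order.

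The main obstacle I anticipate is keeping the bookkeeping straight when distributing $B^{1/2}$ and the single spatial derivative through the products containing both Schwartz functions and $U$-factors: one must verify that the commutator of $B^{1/2}$ with multiplication by $\langle x\rangle^\sigma\Psi$ is bounded on $L^{4/3}$, and that the loss of one factor of $B$ (moving from $B^{-1/2}$ norm to $B^{1/2}$ norm) is compensated by the spatial localization of the Schwartz coefficient. Both of these reduce to functional-calculus bounds for $B=\sqrt{-\Delta+V+m^2}$ on weighted Sobolev spaces, which follow from the hypotheses (V1)--(V3) and the $W^{k,p}$-boundedness of the wave operator used elsewhere in the paper; granted these, the estimates collapse to the same Hölder bookkeeping as in Lemma \ref{est:partial f of R 1}.
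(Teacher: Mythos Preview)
Your proposal is correct and follows essentially the same approach as the paper, which in fact omits the proof entirely and states only that it ``relies on the explicit forms of $\partial_{\bar{f}}\mathcal{R}_{d}$ and standard H\"older's inequality'' in direct analogy with Lemma~\ref{est:partial f of R 1}. Your write-up is actually more detailed than what the paper provides, and the commutator/functional-calculus issues you flag as the main obstacle are precisely the technical points the paper silently absorbs into the phrase ``standard H\"older's inequality''; the structure of the argument---expand $\partial_{\bar f}\mathcal{R}_d$ via Theorem~\ref{thm:nft}, absorb the weight $\langle x\rangle^\sigma$ into the Schwartz coefficients for $d=1,2,3,5$, and use the unweighted Leibniz/H\"older bound $\|U^3\|_{W^{1,4/3}}\lesssim\|U\|_{W^{1,2}}\|U\|_{L^8}^2$ for $d=4$---is exactly the intended one.
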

	\begin{cor} Let $\sigma> \frac52$ and assume \eqref{assumption f1} and \eqref{assumption f2} hold , then 
		\begin{align*}
			&\|\langle x \rangle^{\sigma}B^{1/2}\partial_{\bar{f}}\mathcal{R}_{1}\|_{W^{1,4/3}}\lesssim \langle |\xi_0|^{4N}t \rangle^{-\frac{4N+3}{4N}}[\xi]^{4N+3}_{\frac{1}{4N}}  ,\\
			&\|\langle x \rangle^{\sigma}B^{1/2}\partial_{\bar{f}}\mathcal{R}_{2}\|_{W^{1,4/3}}\lesssim\langle t \rangle^{-\frac{1}{2}}[\xi]^{2}_{\frac{1}{4N}}\big(A_f+C_f\big)+\langle |\xi_0|^{4N}t \rangle^{-\frac{2N+2}{4N}}[\xi]^{2}_{\frac{1}{4N}}\big(B_f+D_f\big)  ,\\
			&\|\langle x \rangle^{\sigma}B^{1/2}\partial_{\bar{f}}\mathcal{R}_{3}\|_{W^{1,4/3}}\lesssim \langle t \rangle^{-\frac{2N+1}{4N}}[\xi]_{\frac{1}{4N}}A_f\big(A_f+C_f+D_f\big)+\langle |\xi_0|^{4N}t \rangle^{-\frac{2N+2}{4N}}[\xi]_{\frac{1}{4N}}B_f\big(B_f+C_f+D_f\big) ,\\
			&\|B^{1/2}\partial_{\bar{f}}\mathcal{R}_{4}\|_{W^{1,4/3}}\lesssim \langle t \rangle^{-\frac{2N+2}{4N}}\|B^{1/2}f\|_{L^2}A_f^{2}+\langle |\xi_0|^{4N}t \rangle^{-\frac{2N+2}{4N}}\|B^{1/2}f\|_{L^2}B_f^{2},\\
			&\|\langle x \rangle^{\sigma}B^{1/2}\partial_{\bar{f}}\mathcal{R}_{5}\|_{W^{1,4/3}}\lesssim
			\langle |\xi_0|^{4N}t \rangle^{-\frac{M^\star}{4N}}[\xi]^{M^\star}_{\frac{1}{4N}},
		\end{align*}
	\end{cor}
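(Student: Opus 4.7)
The plan is to obtain this corollary in exactly the same manner as the corollary following Lemma \ref{est:partial f of R 1}: purely by inserting the hypothesized decay rates \eqref{assumption f1} and \eqref{assumption f2} into the $f$-dependent factors on the right-hand sides of Lemma \ref{est:partial f of R 2}, and regrouping. For $\mathcal{R}_{1}$ and $\mathcal{R}_{5}$ there is nothing to do: the bounds in Lemma \ref{est:partial f of R 2} are already purely polynomial in $[\xi]_{\frac{1}{4N}}$ with no factor of $f$, so they carry over verbatim. Only $\mathcal{R}_{2},\mathcal{R}_{3},\mathcal{R}_{4}$ require any real work, and even there the computation is mechanical.

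For $\mathcal{R}_{2}$, I would take the base factor $\langle |\xi_{0}|^{4N}t\rangle^{-\frac{2}{4N}}[\xi]^{2}_{\frac{1}{4N}}$ given by Lemma \ref{est:partial f of R 2} and distribute it over the two contributions in $(\|B^{-1/2}f\|_{L^{8}}+\|\langle x\rangle^{-\sigma}B^{1/2}f\|_{L^{4}})$, using \eqref{assumption f1} on the first and \eqref{assumption f2} on the second. This produces four cross-terms involving $A_{f},B_{f},C_{f},D_{f}$; the two with decay factor $\langle t\rangle^{-\cdot}$ (namely the $A_{f}$ and $C_{f}$ pieces) can be bounded by $\langle t\rangle^{-1/2}$ because $\frac{2N+1}{4N}>\tfrac{1}{2}$ and $\langle |\xi_{0}|^{4N}t\rangle^{-\frac{2}{4N}}\le 1$, while the two with decay factor $\langle |\xi_{0}|^{4N}t\rangle^{-\cdot}$ (the $B_{f}$ and $D_{f}$ pieces) combine additively in the exponent to give $\langle |\xi_{0}|^{4N}t\rangle^{-\frac{2N+2}{4N}}$, matching the stated rate. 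The treatment of $\mathcal{R}_{3}$ is analogous: the $f$-factor in Lemma \ref{est:partial f of R 2} is quadratic in $f$, so one expands $\|B^{-1/2}f\|_{L^{8}}(\|B^{-1/2}f\|_{L^{8}}+\|\langle x\rangle^{-\sigma}B^{1/2}f\|_{L^{4}})$ into four cross-products, substitutes \eqref{assumption f1}/\eqref{assumption f2} in each factor, and groups the resulting terms by which of the two weights $\langle t\rangle^{-\cdot}$ or $\langle |\xi_{0}|^{4N}t\rangle^{-\cdot}$ is the slowest decaying; the mixed cross-terms are absorbed into the two extremal ones via the pointwise monotonicity $\langle |\xi_{0}|^{4N}t\rangle\le\langle t\rangle$. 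For $\mathcal{R}_{4}$, one simply plugs \eqref{assumption f1} into $\|B^{-1/2}f\|_{L^{8}}^{2}$, keeps $\|B^{1/2}f\|_{L^{2}}$ as it stands (it is controlled by the conserved energy), and retains the two endpoint rates. Whenever a power of $|\xi_{0}|$ needs to be traded for additional $\langle t\rangle$-decay, I would invoke the inequality $\langle |\xi_{0}|^{4N}t\rangle^{-\frac{1}{4N}}|\xi_{0}|\le\langle t\rangle^{-\frac{1}{4N}}$ already used in the preceding corollary.

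There is no genuine obstacle here — the statement is purely a bookkeeping corollary of Lemma \ref{est:partial f of R 2}. The only place where care is needed is to verify that in each cross-term the exponents of the two weights add up to precisely the powers $-\tfrac{1}{2}$, $-\tfrac{2N+1}{4N}$, $-\tfrac{2N+2}{4N}$ appearing in the conclusion, and to discard the strictly faster-decaying mixed contributions by monotonicity in the weights. Once these elementary manipulations are carried out term by term, the corollary follows immediately.
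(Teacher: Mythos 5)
Your proposal is correct and matches the paper's (implicit) argument: the paper states the corollary without proof, and, as with the corollary following Lemma \ref{est:partial f of R 1}, it is obtained by directly substituting \eqref{assumption f1} and \eqref{assumption f2} into Lemma \ref{est:partial f of R 2} and comparing time weights. The only thing worth stating a bit more carefully is that the mixed $A_f B_f$ cross-terms in the $\mathcal{R}_3,\mathcal{R}_4$ estimates are not absorbed by weight monotonicity alone but by the elementary inequality $2A_fB_f\le A_f^2+B_f^2$ combined with $\langle |\xi_0|^{4N}t\rangle\le\langle t\rangle$, a triviality you essentially gesture at.
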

	Now we can derive estimates of $\hat{\mathcal{R}}_{f}$ and $\mathcal{R}_{\xi}$:
	\begin{prop}\label{thm:hat-Rf} For $\sigma> \frac52$, the following estimates hold:
		\begin{align*}
			&\left\|B^{-1/2}\hat{\mathcal{R}}_f\right\|_{L^8}  \lesssim \langle t \rangle^{-\frac{2N+1}{4N}}\mathcal{P}_1+
			\langle |\xi_0|^{4N}t \rangle^{-\frac{2N+2}{4N}}\mathcal{P}_2,\\
			&\left\| \langle x \rangle^{-\sigma}B^{1/2} \hat{\mathcal{R}}_f \right\|_{L^4} \lesssim \langle t \rangle^{-\frac{1}{2}} \mathcal{P}_3+\langle |\xi_0|^{4N}t \rangle^{-\frac{1}{2}} \mathcal{P}_4,
		\end{align*}
		where $\mathcal{P}_1, \mathcal{P}_2, \mathcal{P}_3, \mathcal{P}_4$ are  defined by \eqref{p1 formula}, \eqref{p2 formula}, \eqref{p3 formula} and \eqref{p4 formula} respectively.
	\end{prop}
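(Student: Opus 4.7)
The plan is to decompose $\hat{\mathcal{R}}_f = \sum_{d=1}^{5}\hat{\mathcal{R}}_f^{(d)}$ with $\hat{\mathcal{R}}_f^{(d)} := -\mathrm{i}\int_0^t e^{-\mathrm{i}B(t-s)}\partial_{\bar{f}}\mathcal{R}_d\, ds$ and to estimate each piece by Minkowski's inequality followed by the linear dispersive machinery of Section \ref{Sec-Preliminary}. The pointwise-in-$s$ bounds on $\|B^{-1/2}\partial_{\bar f}\mathcal{R}_d\|_{W^{2,8/7}}$ and $\|\langle x\rangle^{\sigma}B^{1/2}\partial_{\bar f}\mathcal{R}_d\|_{W^{1,4/3}}$ needed as inputs have already been assembled in Lemmas \ref{est:partial f of R 1} and \ref{est:partial f of R 2} (and their corollaries) under the ansatz \eqref{assumption f1}--\eqref{assumption f2}, so only the propagation through the oscillatory integral remains.

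For the $L^{8}$ bound I would apply Lemma \ref{Lp-Lp'} with $l=0$, $p'=8$, $\theta=1$ (noting $s=15/8 \le 2$) to obtain, for each $d$,
\[
\|B^{-1/2}\hat{\mathcal{R}}_f^{(d)}\|_{L^{8}} \lesssim \int_{0}^{t}\min\bigl\{|t-s|^{-9/8},\,|t-s|^{-3/8}\bigr\}\,\|B^{-1/2}\partial_{\bar f}\mathcal{R}_d(s)\|_{W^{2,8/7}}\,ds,
\]
and then substitute the corresponding corollary to Lemma \ref{est:partial f of R 1}. Each resulting convolution has integrand given by a product of $\min\{|t-s|^{-9/8},|t-s|^{-3/8}\}$ with a polynomial in $\langle s\rangle^{-1}$ and $\langle|\xi_0|^{4N}s\rangle^{-1}$, and can be handled by the standard three-part split $\int_0^{t/2}+\int_{t/2}^{t-1}+\int_{t-1}^{t}$; in each regime this returns either the $\langle t\rangle^{-(2N+1)/(4N)}$ or the $\langle|\xi_0|^{4N}t\rangle^{-(2N+2)/(4N)}$ decay, multiplied by the bootstrap quantities $A_f,B_f,C_f,D_f,[\xi]_{1/(4N)}$. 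Collecting the worst coefficient for each $d$ yields $\mathcal{P}_1$ and $\mathcal{P}_2$.

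For the weighted $L^{4}$ bound the strategy is the same but the dispersive input is split by $d$. For $d\in\{1,2,3,5\}$ the kernel $\partial_{\bar f}\mathcal{R}_d$ is spatially localized by the Schwartz profiles provided by Theorem \ref{thm:nft}, so I would insert $\langle x\rangle^{-\sigma}\langle x\rangle^{\sigma}$ and apply the weighted Lemma \ref{Weighted-Lp-Lp'} with $l=0$, $p'=4$, $\theta=0$, producing the kernel $\min\{|t-s|^{-5/4},|t-s|^{-1/2}\}$ paired with $\|\langle x\rangle^{\sigma}B^{1/2}\partial_{\bar f}\mathcal{R}_d(s)\|_{W^{1,4/3}}$. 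For the non-localized piece $d=4$ I would simply use $\langle x\rangle^{-\sigma}\le 1$ and invoke the unweighted Lemma \ref{Lp-Lp'} with the same parameters to obtain the kernel $|t-s|^{-1/2}$ against $\|B^{1/2}\partial_{\bar f}\mathcal{R}_4(s)\|_{W^{1,4/3}}$. Plugging in the corollary to Lemma \ref{est:partial f of R 2} and repeating the three-part convolution split delivers the claimed $\langle t\rangle^{-1/2}\mathcal{P}_3+\langle|\xi_0|^{4N}t\rangle^{-1/2}\mathcal{P}_4$.

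The main obstacle is the weak-resonance regime $N>1$, which is the very difficulty already flagged in the introduction: the naive unweighted kernel $|t-s|^{-1/2}$ cannot be convolved against the $\partial_{\bar f}\mathcal{R}_2$-contribution since the $\xi$-factor decays only like $\langle|\xi_0|^{4N}s\rangle^{-1/(4N)}$ and $\tfrac12+\tfrac{1}{2N}<1$ makes the convolution divergent. This is what forces the use of the weighted kernel $|t-s|^{-5/4}$ from Lemma \ref{Weighted-Lp-Lp'}, whose integrability tames the slow $\xi$-decay at the cost of a spatial weight $\langle x\rangle^{\sigma}$; the weight is affordable precisely because Theorem \ref{thm:nft} guarantees Schwartz spatial localization of $\partial_{\bar f}\mathcal{R}_d$ for $d=1,2,3,5$. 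A second delicate balance arises for $d=4$: here $\|B^{1/2}f\|_{L^{2}}$ is controlled by energy conservation and combines with $A_f^{2}$ (resp. $B_f^{2}$) to give integrand decay $\langle s\rangle^{-1-1/(2N)}$ (resp. $\langle|\xi_0|^{4N}s\rangle^{-1-1/(2N)}$), which is exactly enough to convolve against $|t-s|^{-1/2}$ and return $\langle t\rangle^{-1/2}$ (resp. $\langle|\xi_0|^{4N}t\rangle^{-1/2}$). Verifying this exponent balance and its analogue on the $|\xi_0|^{-4N}$ time-scale is where the bulk of the bookkeeping lies, and the explicit forms of $\mathcal{P}_1,\dots,\mathcal{P}_4$ are then read off by recording the worst constant produced by each $d$.
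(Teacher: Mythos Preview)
Your proposal is correct and follows essentially the same route as the paper: decompose $\hat{\mathcal{R}}_f$ by $d$, feed the pointwise bounds from Lemmas \ref{est:partial f of R 1}--\ref{est:partial f of R 2} (and their corollaries) into the dispersive Lemmas \ref{Lp-Lp'} and \ref{Weighted-Lp-Lp'} with exactly the parameters you chose, treat $d=4$ with the unweighted kernel $|t-s|^{-1/2}$ while the localized pieces $d\in\{1,2,3,5\}$ use the weighted kernel $\min\{|t-s|^{-5/4},|t-s|^{-1/2}\}$, and then close the time convolutions. The only cosmetic difference is that the paper packages the convolution step via the appendix Lemma \ref{convo1} (and Lemma \ref{convo2} for the $d=4$ piece on the $|\xi_0|^{-4N}$ scale, which is what produces the $|\xi_0|^{-(2N-1)}B_f^2$ term in $\mathcal{P}_4$) rather than the explicit three-interval split you describe.
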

	\begin{proof}	
		By Lemma \ref{Lp-Lp'}, Lemma \ref{est:partial f of R 1} and the corollary thereafter, we have
		\begin{align*}
			\left\|B^{-1/2}\hat{\mathcal{R}}_f\right\|_{L^8}\lesssim & \int_{0}^{t}\left\|e^{-\mathrm{i}B (t-s)}B^{-1/2}\partial_{\bar{f}}\mathcal{R}\right \|_{L^8}ds \\
			\lesssim& \int_{0}^{t}\min\{|t-s|^{-9 / 8},|t-s|^{-3 / 8}\}\|B^{-1/2}\partial_{\bar{f}}\mathcal{R}\|_{W^{2,8/7}}ds\\
			\lesssim & \int_{0}^{t}\min\{|t-s|^{-9 / 8},|t-s|^{-3 / 8}\}
			\Big(\langle s \rangle^{-\frac{2N+1}{4N}}\langle |\xi_0|^{4N}s \rangle^{-\frac{1}{4N}}\mathcal{P}_1+
			\langle |\xi_0|^{4N}s \rangle^{-\frac{2N+2}{4N}}\mathcal{P}_2\Big)ds\\
			\lesssim & \langle t \rangle^{-\frac{2N+1}{4N}}\langle |\xi_0|^{4N}t \rangle^{-\frac{1}{4N}}\mathcal{P}_1+
			\langle |\xi_0|^{4N}t \rangle^{-\frac{2N+2}{4N}}\mathcal{P}_2,
		\end{align*}
		where we can choose
		\begin{align}
			\mathcal{P}_1=&[\xi]^{2}_{\frac{1}{4N}}A_f+[\xi]^{2}_{\frac{1}{4N}}|\xi_{0}|^{-1}C_f+[\xi]_{\frac{1}{4N}}A_f^2+[\xi]_{\frac{1}{4N}}A_fC_f+[\xi]_{\frac{1}{4N}}A_fD_f+	|\xi_{0}|^{4/3}A_f^{5/3},\label{p1 formula}\\	
			\mathcal{P}_2=&[\xi]^{2N+3}_{\frac{1}{4N}}+[\xi]^{2}_{\frac{1}{4N}}B_f+[\xi]^{2}_{\frac{1}{4N}}D_f+[\xi]_{\frac{1}{4N}}B_f^2+[\xi]_{\frac{1}{4N}}B_fC_f+[\xi]_{\frac{1}{4N}}B_fD_f+|\xi_{0}|^{4/3}B_f^{5/3}.\label{p2 formula}	
		\end{align}
		Similarly, by Lemma \ref{Weighted-Lp-Lp'}, Lemma \ref{est:partial f of R 2} and the corollary thereafter, we have 
		\begin{align*}
			\left\| \langle x \rangle^{-\sigma}B^{1/2} \hat{\mathcal{R}}_f \right\|_{L^4} \lesssim &  \int_{0}^{t}\left\|\langle x \rangle^{-\sigma}e^{-\mathrm{i}B (t-s)}B^{1/2}\partial_{\bar{f}}\mathcal{R}\right \|_{L^4}ds \\
			\lesssim & \sum_{d\in \{1,2,3,5\}}\int_{0}^{t}\min\{|t-s|^{-5/4},|t-s|^{-1/2}\}\|\langle x \rangle^{\sigma}B^{1/2}\partial_{\bar{f}}\mathcal{R}_{d}\|_{W^{1,4/3}}ds\\
			&+\int_{0}^{t}|t-s|^{-1/2}\|B^{1/2}\partial_{\bar{f}}\mathcal{R}_{4}\|_{W^{1,4/3}}ds\\
			\lesssim& \langle t \rangle^{-\frac{1}{2}}\mathcal{P}_3+\langle |\xi_0|^{4N}t \rangle^{-\frac{1}{2}}\mathcal{P}_4,
		\end{align*}
		where we choose 
		\begin{align}
			\mathcal{P}_3=&[\xi]^{2}_{\frac{1}{4N}}A_f+[\xi]^{2}_{\frac{1}{4N}}C_f+[\xi]_{\frac{1}{4N}}A_f^2+[\xi]_{\frac{1}{4N}}A_fC_f+[\xi]_{\frac{1}{4N}}A_fD_f+	|\xi_{0}|A_f^2, \label{p3 formula}\\	
			\mathcal{P}_4=&[\xi]^{2N+3}_{\frac{1}{4N}}+[\xi]^{2}_{\frac{1}{4N}}B_f+[\xi]^{2}_{\frac{1}{4N}}D_f+[\xi]_{\frac{1}{4N}}B_f^2+[\xi]_{\frac{1}{4N}}B_fC_f+[\xi]_{\frac{1}{4N}}B_fD_f+|\xi_{0}|^{-(2N-1)}B_f^{2}.\label{p4 formula}	
		\end{align}	
	\end{proof}	
	We finally come to the following error estimates:
	\begin{prop}\label{est:Rf Rxi}For $\sigma> \frac52$, the following estimates hold:
		\begin{align}
			&\|\langle x \rangle^{-\sigma}B^{-1/2}\mathcal{R}_{f}\|_{L^2}\lesssim   \langle t \rangle^{-\frac{2N+1}{4N}}\langle |\xi_0|^{4N}t \rangle^{-\frac{1}{4N}}\left(\|B^{-1/2}f(0)\|_X + |\xi_0|^{2N+1}+\mathcal{P}_1\right) + \langle |\xi_0|^{4N}t \rangle^{-\frac{2N+2}{4N}}\mathcal{P}_2,\\
			&|\mathcal{R}_{\xi}|\lesssim \langle t \rangle^{-\frac{4N+2}{4N}}\mathcal{P}_5+ \langle t \rangle^{-\frac{2N+1}{4N}}\langle|\xi_0|^{4N}t \rangle^{-\frac{2N+1}{4N}}\mathcal{P}_6 + \langle |\xi_0|^{4N}t \rangle^{-\frac{4N+2}{4N}}\mathcal{P}_7,
		\end{align}	
		where $\mathcal{P}_5, \mathcal{P}_6, \mathcal{P}_7$ are defined in \eqref{p5 formula}, \eqref{p6 formula} and \eqref{p7 formula} respectively.
	\end{prop}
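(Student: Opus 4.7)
The plan is to estimate each of the two quantities by exploiting its explicit decomposition and then applying the dispersive and weighted estimates collected in Section \ref{Sec-Preliminary}, together with the structural information from Theorem \ref{thm:nft} and the bootstrap hypotheses \eqref{assumption f1}--\eqref{assumption f2}.

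\textbf{Step 1: Weighted estimate of $\mathcal{R}_f$.} Write $\mathcal{R}_f$ as the sum of the five terms in its definition:
\begin{align*}
\mathcal{R}_f = \underbrace{e^{-\mathrm{i}Bt}f(0)}_{I}+\underbrace{\hat{\mathcal{R}}_f}_{II}+\underbrace{\sum_{M_1}\eta^\nu(0)\bar{\eta}^\mu(0)e^{-\mathrm{i}Bt}(B-\omega(\nu-\mu)-\mathrm{i}0)^{-1}\bar{\Phi}_{\mu\nu}}_{III}\\
+\underbrace{e^{-\mathrm{i}Bt}\sum_{M_1}\int_{0}^{t}e^{\mathrm{i}(B-\omega(\nu-\mu))s}\tfrac{d}{ds}(\eta^\nu\bar{\eta}^\mu)(B-\omega(\nu-\mu)-\mathrm{i}0)^{-1}\bar{\Phi}_{\mu\nu}ds}_{IV}\\
-\underbrace{\mathrm{i}e^{-\mathrm{i}Bt}\sum_{M\setminus M_1}\int_{0}^{t}e^{\mathrm{i}(B-\omega(\nu-\mu))s}\eta^\nu\bar{\eta}^\mu\bar{\Phi}_{\mu\nu}ds}_{V}.
\end{align*}
For $I$ apply the weighted $L^2$ estimate of Lemma \ref{Weighted-L2}, which yields $\langle t\rangle^{-3/2}\|B^{-1/2}f(0)\|_{L^2}$, absorbable into the first term on the right-hand side. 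For $II$, repeat the argument of Proposition \ref{thm:hat-Rf} but with the $L^8$ dispersive bound replaced by the weighted $L^2$ bound of Lemma \ref{Weighted-L2}; the combinatorics leading to $\mathcal{P}_1,\mathcal{P}_2$ is identical, so one recovers exactly the same quadratic combinations. For $III$, since $(0,2N+1)\in M_1$ places the spectral parameter $(2N+1)\omega$ strictly inside $\sigma_c(B)$, Lemma \ref{lem-singular-resolvents} gives decay $\langle t\rangle^{-6/5}$ against $|\xi_0|^{2N+1}$, which is stronger than the claimed rate. For $IV$, the key is that $\tfrac{d}{ds}(\eta^\nu\bar{\eta}^\mu)$ can be computed from the leading equation \eqref{eq:xi3} for $\xi$; the resulting factor has size $\langle|\xi_0|^{4N}s\rangle^{-(2N+2)/4N}[\xi]^{2N+2}_{1/4N}$, and applying Lemma \ref{lem-singular-resolvents} to the propagator--resolvent pair then controls the convolution.

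\textbf{Step 2: Treating the non-resonant sum $V$.} This is the step I expect to be the most delicate, because $\omega(\nu-\mu)$ for $(\mu,\nu)\in M\setminus M_1$ still lies inside $\sigma_c(B)$ yet no integration by parts has been performed yet. The plan is to integrate by parts in $s$, using $\partial_s e^{\mathrm{i}(B-\omega(\nu-\mu))s}=\mathrm{i}(B-\omega(\nu-\mu))e^{\mathrm{i}(B-\omega(\nu-\mu))s}$, to produce (a) boundary terms of the same form as $III$, handled by Lemma \ref{lem-singular-resolvents}, and (b) an integral involving $\tfrac{d}{ds}(\eta^\nu\bar{\eta}^\mu)$, handled as in $IV$. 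Because the $M\setminus M_1$ indices satisfy $\mu+\nu\ge 2N+3$, each such term carries an extra factor of $|\xi|^2$, improving the decay by $\langle|\xi_0|^{4N}t\rangle^{-2/4N}$, which is precisely what is needed to fit into $\mathcal{P}_2$.

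\textbf{Step 3: Pointwise estimate of $\mathcal{R}_\xi$.} Using the decomposition \eqref{eq:Rxi},
\begin{equation*}
\mathcal{R}_\xi = -\mathrm{i}\langle\partial_{\bar{\xi}}\mathcal{R}_G,\mathcal{M}_f\rangle -\mathrm{i}\langle\partial_{\bar{\xi}}\bar{\mathcal{R}}_G,\bar{\mathcal{M}}_f\rangle -\mathrm{i}\langle\partial_{\bar{\xi}}G,\mathcal{R}_f\rangle -\mathrm{i}\langle\partial_{\bar{\xi}}\bar{G},\bar{\mathcal{R}}_f\rangle -\mathrm{i}\partial_{\bar{\xi}}\mathcal{R},
\end{equation*}
I would estimate each piece separately. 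The first two involve the non-resonant part $\mathcal{R}_G$ with $\mu+\nu\ge 2N+3$ paired with the Schwartz-localized $\mathcal{M}_f=-\xi^{2N+1}(B-(2N+1)\omega-\mathrm{i}0)^{-1}\bar{\Phi}_{0,2N+1}$, so these contribute at the rate $\langle|\xi_0|^{4N}t\rangle^{-(4N+2)/4N}\cdot[\xi]^{4N+3}_{1/4N}$, which is absorbed into $\mathcal{P}_7$. The middle two terms are estimated by Cauchy--Schwarz after inserting the weight $\langle x\rangle^{\pm\sigma}$: since $\partial_{\bar{\xi}}G$ is Schwartz, one can bound $|\langle\partial_{\bar{\xi}}G,\mathcal{R}_f\rangle|\le\|\langle x\rangle^\sigma\partial_{\bar{\xi}}G\|_{L^2}\|\langle x\rangle^{-\sigma}\mathcal{R}_f\|_{L^2}$ and invoke Step 1 (after multiplying by $B^{1/2}$ and $B^{-1/2}$). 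Finally, $\partial_{\bar{\xi}}\mathcal{R}$ is handled using the explicit structure of $\mathcal{R}=\sum_{d=0}^{5}\mathcal{R}_d^{(2N)}$ from Theorem \ref{thm:nft}, paralleling Lemmas \ref{est:partial f of R 1}--\ref{est:partial f of R 2} but differentiating in $\bar{\xi}$ instead of $\bar{f}$: every monomial of $\mathcal{R}_d$ carries total $\xi$-degree at least $2r+4=4N+4$ (up to terms of order $|\xi|^{M^\star}$), so $\partial_{\bar{\xi}}\mathcal{R}$ acquires a factor $|\xi|^{4N+3}$ modulo $\mathbf{f}$-insertions, and these terms fit comfortably into $\mathcal{P}_5,\mathcal{P}_6,\mathcal{P}_7$ after substituting \eqref{assumption f1}--\eqref{assumption f2}. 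Collecting the contributions according to whether they carry a free $\langle t\rangle^{-\alpha}$ factor (dispersive smallness for large time) or the self-similar $\langle|\xi_0|^{4N}t\rangle^{-\alpha}$ factor (coming from the $\xi$-decay) yields precisely the three-term bound with the $\mathcal{P}_j$ displayed in the statement.
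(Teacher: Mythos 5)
Your overall decomposition of $\mathcal{R}_f$ into $I$--$V$ and of $\mathcal{R}_\xi$ via \eqref{eq:Rxi} matches the paper, but several of the estimates you plan to use do not actually close, and one of your proposed steps is an unnecessary detour.

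The most concrete problem is in Step~1, terms $I$ and $II$. You propose to apply the weighted $L^2$ estimate of Lemma~\ref{Weighted-L2}. That lemma requires a weight $\langle x\rangle^{-\sigma}$ on \emph{both} sides of the propagator; to use it you would have to write $f(0)=\langle x\rangle^{-\sigma}\bigl(\langle x\rangle^{\sigma}f(0)\bigr)$, and $\|\langle x\rangle^{\sigma}B^{-1/2}f(0)\|_{L^2}$ is not controlled by $\|B^{-1/2}f(0)\|_{X}$ (the data are only assumed in $W^{2,2}\cap W^{2,1}$, with no spatial weight). The same obstruction appears for $II$ in the $d=4$ piece: $\partial_{\bar f}\mathcal{R}_4\sim B^{-1/2}(U^3)$ has no Schwartz factor to absorb the weight, which is exactly why the paper's Lemma~\ref{est:partial f of R 2} treats $d=4$ with an \emph{unweighted} norm. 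The correct route — and what the paper does — is the opposite direction of inequality: use Hölder's inequality $\|\langle x\rangle^{-\sigma}g\|_{L^2}\lesssim\|\langle x\rangle^{-\sigma}\|_{L^{8/3}}\|g\|_{L^8}\lesssim\|g\|_{L^8}$ (valid since $\sigma>\tfrac{5}{2}$ makes $\langle x\rangle^{-\sigma}\in L^{8/3}$), then invoke the $L^8$ dispersive estimate for $I$ and the first bound of Proposition~\ref{thm:hat-Rf} for $II$. Your Step~2 is also a detour: for $(\mu,\nu)\in M\setminus M_1$ one has $\mu+\nu\ge 2N+3$, so after pulling $\bar\Phi_{\mu\nu}=\langle x\rangle^{-\sigma}(\langle x\rangle^{\sigma}\bar\Phi_{\mu\nu})$ Lemma~\ref{Weighted-L2} and Lemma~\ref{convo1} applied to $|\eta^\nu\bar\eta^\mu|\lesssim\langle|\xi_0|^{4N}s\rangle^{-(2N+3)/4N}[\xi]^{2N+3}_{1/4N}$ already give the required bound; no integration by parts is needed, nor is it free, since it would introduce the singular resolvent (for $M\setminus M_1$ indices one still has $\omega(\nu-\mu)>m$) for no gain.

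Two further imprecisions. In $IV$, the quantity $\frac{d}{ds}(\eta^\nu\bar\eta^\mu)$ must be computed from the full equation \eqref{eq:xi} (equivalently $|\dot\eta|=|\dot\xi+\mathrm{i}\omega\xi|$), not from \eqref{eq:xi3}, which only governs $\frac{d}{dt}|\xi|^2$ and gives no control on the phase; the cancellation of the $\mathrm{i}\omega\xi$ term against the $\eta$-change of variable is exactly what makes $|\dot\eta|$ decay faster than $|\eta|$. Finally, your claim that every monomial of $\partial_{\bar\xi}\mathcal{R}$ carries a factor $|\xi|^{4N+3}$ is false for $d\ge 2$: by Theorem~\ref{thm:nft}(iii.2--4), the $\xi$-degree of $F_d^{(2N)}$ (and of $\Lambda_{dlk}^{(2N)}$) is as low as $4-d$ (resp.\ $1$), and the missing powers are provided by factors of $f$. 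The correct bounds are the ones in Lemma~\ref{est:partial xi of R}: $|\partial_{\bar\xi}\mathcal{R}_d|\lesssim|\xi|^{3-d}\|B^{-1/2}f\|_{L^8}^{d}$ for $d=2,3$ and $|\partial_{\bar\xi}\mathcal{R}_4|\lesssim\|B^{-1/2}f\|_{L^8}^4$, and only after substituting \eqref{assumption f1}--\eqref{assumption f2} do the resulting terms split into the self-similar piece going into $\mathcal{P}_7$ and the genuinely dispersive $\langle t\rangle^{-\alpha}$ piece going into $\mathcal{P}_5$.
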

	For the sake of convenience we first prove
	\begin{lem}\label{est:partial xi of R}
		\begin{align*}
			|\partial_{\bar{\xi}}\mathcal{R}|\lesssim  \langle t \rangle^{-\frac{4N+2}{4N}}\left([\xi]_{\frac{1}{4N}}A_f^2+A_f^3\right)
			+\langle |\xi_0|^{4N}t \rangle^{-\frac{4N+3}{4N}}\left([\xi]^{4N+3}_{\frac{1}{4N}}+[\xi]^{4N+2}_{\frac{1}{4N}}(A_f+B_f)+[\xi]_{\frac{1}{4N}}B_f^2+B_f^3\right).	
		\end{align*}
	\end{lem}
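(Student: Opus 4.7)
The proof mirrors the structure of Lemmas \ref{est:partial f of R 1} and \ref{est:partial f of R 2}, but now I need a scalar pointwise bound rather than an $L^p$ bound. Since $\mathcal{R} = \sum_{d=0}^{5} \mathcal{R}_d^{(2N)}$ by Theorem \ref{thm:nft} applied at $r = 2N$, my plan is to differentiate each $\mathcal{R}_d$ with respect to $\bar\xi$ using the explicit representations (iii.0)--(iii.5), bound the resulting scalar by H\"older's inequality with Schwartz spatial weights, and then substitute the pointwise decay $|\xi(t)| \le \langle |\xi_0|^{4N}t\rangle^{-1/(4N)}[\xi]_{1/(4N)}$ together with the two-part decay of $\|B^{-1/2}f\|_{L^8}$ from \eqref{assumption f1}.

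The two clusters on the right-hand side correspond exactly to the two terms in the bound on $\|B^{-1/2}f\|_{L^8}$: the $A_f$-component carries decay $\langle t\rangle^{-(2N+1)/(4N)}$, while the $B_f$-component carries only the slower $\langle |\xi_0|^{4N}t\rangle^{-(2N+1)/(4N)}$. Since $|\xi(t)|$ itself decays only in $\langle |\xi_0|^{4N}t\rangle$, any term of $\partial_{\bar\xi}\mathcal{R}$ carrying a positive power of $|\xi|$ automatically feeds into the second cluster, using $\langle |\xi_0|^{4N}t\rangle \le \langle t\rangle$ to collapse mixed weights. Term-by-term: for $\mathcal{R}_0$ the degree $\mu+\nu = 4N+4$ forces $|\partial_{\bar\xi}\mathcal{R}_0| \lesssim |\xi|^{4N+3}$, producing the $[\xi]_{1/(4N)}^{4N+3}$ contribution. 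For $\mathcal{R}_1$ the derivative yields $(\xi,\bar\xi)$-monomials of total degree $\ge 4N+2$ paired with one linear functional $\int \mathbf{\Phi}\cdot\mathbf{f}\,dx$; H\"older with Schwartz $\mathbf{\Phi}$ bounds this by $\|B^{-1/2}f\|_{L^8}$, giving $[\xi]_{1/(4N)}^{4N+2}(A_f + B_f)$ after the weight collapse. For $\mathcal{R}_2$ and $\mathcal{R}_3$ the key estimate is $|\int \Psi\, U^d\, dx| \lesssim \|\Psi\|_{L^{8/(8-d)}}\|U\|_{L^8}^d \lesssim \|B^{-1/2}f\|_{L^8}^d$; the minimal-degree surviving monomials yield $|\xi|\,\|B^{-1/2}f\|_{L^8}^2$ (from $F_2$ of degree $\ge 2$) and $\|B^{-1/2}f\|_{L^8}^3$ (from $F_3$ of degree $\ge 1$), producing the $[\xi]A_f^2 + A_f^3$ and $[\xi]B_f^2 + B_f^3$ terms.

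For $\mathcal{R}_4$ the fact $F_4^{(2N)} \equiv 1$ kills the pure $\int U^4\,dx$ term under $\partial_{\bar\xi}$, and the $\prod_l \int \mathbf{\Lambda}\cdot\mathbf{f}\,dx$ terms retain at least $|\xi|^3$ and four $f$-integrals, giving $\lesssim |\xi|^3 \|B^{-1/2}f\|_{L^8}^4$, which is absorbed into $A_f^3 + B_f^3$ by smallness of the data. $\mathcal{R}_5$ contributes $|\xi|^{M^\star}$ and is absorbed by choosing $M^\star$ sufficiently large. I expect the only real (but mild) technical subtlety to be the bookkeeping of cross terms like $A_f^{d-1}B_f$ produced when expanding $\|B^{-1/2}f\|_{L^8}^d$; these are handled by Young's inequality $A_f^{d-1}B_f \lesssim A_f^d + B_f^d$ together with $\langle t\rangle^{-\alpha}\langle |\xi_0|^{4N}t\rangle^{-\beta}\le \langle |\xi_0|^{4N}t\rangle^{-(\alpha+\beta)}$, which systematically routes every mixed product into the appropriate cluster on the right-hand side.
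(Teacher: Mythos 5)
Your proposal is correct and follows essentially the same route as the paper: differentiate each $\mathcal{R}_d$ pointwise using the explicit forms from Theorem \ref{thm:nft}, bound the result by $|\xi|^{a}\|B^{-1/2}f\|_{L^8}^b$ via H\"older with Schwartz weights, and then substitute $|\xi|\lesssim\langle|\xi_0|^{4N}t\rangle^{-1/4N}[\xi]_{1/4N}$ and the two-part bound \eqref{assumption f1}, collapsing mixed time weights with $\langle t\rangle^{-\alpha}\le\langle|\xi_0|^{4N}t\rangle^{-\alpha}$. The only cosmetic difference is that you track the extra $|\xi|^3$ factor in $\partial_{\bar\xi}\mathcal{R}_4$ whereas the paper simply uses $\|B^{-1/2}f\|_{L^8}^4$; both are more than enough.
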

	\begin{proof}
		By using Leibnitz rule and expicit formula of $\mathcal{R} (iii.0-iii.5)$, we have 
		\begin{align*}
			&|\partial_{\bar{\xi}}\mathcal{R}_0|\lesssim|\xi|^{4N+3}\lesssim	\langle |\xi_0|^{4N}t \rangle^{-\frac{4N+3}{4N}}[\xi]^{4N+3}_{\frac{1}{4N}},\\
			&|\partial_{\bar{\xi}}\mathcal{R}_1|\lesssim|\xi|^{4N+2}\|B^{-1/2}f\|_{L^8}\lesssim \langle |\xi_0|^{4N}t \rangle^{-\frac{6N+3}{4N}}[\xi]^{4N+2}_{\frac{1}{4N}}(A_f+B_f),\\
			&|\partial_{\bar{\xi}}\mathcal{R}_2|\lesssim|\xi|\|B^{-1/2}f\|_{L^8}^2\lesssim \langle t \rangle^{-\frac{4N+2}{4N}}[\xi]_{\frac{1}{4N}}A_f^2+\langle |\xi_0|^{4N}t \rangle^{-\frac{4N+3}{4N}}[\xi]_{\frac{1}{4N}}B_f^2,\\
			&|\partial_{\bar{\xi}}\mathcal{R}_3|\lesssim\|B^{-1/2}f\|_{L^8}^3\lesssim \langle t \rangle^{-\frac{6N+3}{4N}}A_f^3+ \langle |\xi_0|^{4N}t \rangle^{-\frac{6N+3}{4N}}B_f^3,\\
			&|\partial_{\bar{\xi}}\mathcal{R}_4|\lesssim\|B^{-1/2}f\|_{L^8}^4\lesssim \langle t \rangle^{-\frac{8N+4}{4N}}A_f^4+ \langle |\xi_0|^{4N}t \rangle^{-\frac{8N+4}{4N}}B_f^4,\\
			&|\partial_{\bar{\xi}}\mathcal{R}_5|\lesssim|\xi|^{M^*}\lesssim \langle |\xi_0|^{4N}t \rangle^{-\frac{M^\star}{4N}}[\xi]^{M^\star}_{\frac{1}{4N}}.
		\end{align*}
		Hence, we get desired estimate of $\partial_{\bar{\xi}}\mathcal{R}$.
	\end{proof}
	\begin{proof}[Proof of Proposition \ref{est:Rf Rxi}]
		By H\"older's inequality,
		\begin{align*}
			&\left\|\langle x \rangle^{-\sigma}B^{-1/2}\mathcal{R}_{f}\right\|_{L^2}\\
			\lesssim &\left\|e^{-\mathrm{i}B t}B^{-1/2}f(0)\right\|_{L^8}+\left\|\int_{0}^{t}e^{-\mathrm{i}B (t-s)}B^{-1/2}\partial_{\bar{f}}\mathcal{R}ds\right\|_{L^8}\\
			&+\sum_{(\mu, \nu) \in M_1}\left|\eta^{\nu}(0)\bar{\eta}^{\mu}(0)\right|\left\|\langle x \rangle^{-\sigma}e^{-\mathrm{i}B t}(B-\omega (\nu-\mu)-\mathrm{i}0)^{-1}B^{-1/2}\bar{\Phi}_{\mu \nu}\right\|_{L^2}\\
			&+\sum_{(\mu, \nu) \in M_1}\int_{0}^{t}\left\|\langle x \rangle^{-\sigma}e^{-\mathrm{i}B (t-s)}(B-\omega (\nu-\mu)-\mathrm{i}0)^{-1}B^{-1/2}\bar{\Phi}_{\mu \nu}\right\|_{L^2}\left|\frac{d}{ds}(\eta^{\nu}\bar{\eta}^{\mu})\right|ds\\
			&+\sum_{(\mu, \nu) \in M \backslash M_1}\int_{0}^{t}\left\|\langle x \rangle^{-\sigma}e^{-\mathrm{i}B (t-s)}B^{-1/2}\bar{\Phi}_{\mu \nu}\right\|_{L^2}\left|\eta^{\nu}\bar{\eta}^{\mu}\right|ds\\
			:=& I+II+III+IV+V.
		\end{align*}
		In the proof of Proposition \ref{est:f}, we have showed that
		$$I=\|e^{-\mathrm{i}B t}B^{-1/2}f(0)\|_{L^8}\lesssim \langle t \rangle^{-9/8}\|B^{-1/2}f(0)\|_X.$$
		By Proposition \ref{thm:hat-Rf}, 
		\begin{align*}
			II=\left\|B^{-1/2}\hat{\mathcal{R}}_f\right\|_{L^8}  \lesssim \langle t \rangle^{-\frac{2N+1}{4N}}\mathcal{P}_1+
			\langle |\xi_0|^{4N}t \rangle^{-\frac{2N+2}{4N}}\mathcal{P}_2.
		\end{align*}
		By singular resolvents estimates Lemma \ref{lem-singular-resolvents}, we have 
		$$
		III\lesssim \langle t \rangle^{-6/5}|\xi_0|^{2N+1}.
		$$
		For the term $IV$, since $\left|\frac{d}{ds}(\eta^{\nu}\bar{\eta}^{\mu})\right|\lesssim |\eta|^{2N}|\dot{\eta}|$,
		and
		\begin{align*}
			|\dot{\eta}|=|\dot{\xi}  + \mathrm{i}\omega\xi |&\le |\partial_{\bar{\xi}}Z_0|+|\left\langle \partial_{\xi}G, f\right\rangle| +|\left\langle \partial_{\bar{\xi} }\bar{G}, \bar{f}\right\rangle|+|\partial_{\bar{\xi}}\mathcal{R}|\\
			&\lesssim  \langle t \rangle^{-\frac{4N+2}{4N}}\left([\xi]_{\frac{1}{4N}}A_f^2+A_f^3\right)+\langle|\xi_0|^{4N}t \rangle^{-\frac{3}{4N}}\left([\xi]^{3}_{\frac{1}{4N}}+[\xi]_{\frac{1}{4N}}B_f^2+B_f^3\right),				
		\end{align*}
		it holds that
		\begin{align*}
			IV\lesssim& \int_{0}^{t}\langle t-s \rangle^{-6/5}\langle s \rangle^{-\frac{4N+2}{4N}}[\xi]^{2N}_{\frac{1}{4N}}\left([\xi]_{\frac{1}{4N}}A_f^2+A_f^3\right)ds\\
			&+\int_{0}^{t}\langle t-s \rangle^{-6/5}\langle|\xi_0|^{4N}s \rangle^{-\frac{2N+3}{4N}}[\xi]^{2N}_{\frac{1}{4N}}\left([\xi]^{3}_{\frac{1}{4N}}+[\xi]_{\frac{1}{4N}}B_f^2+B_f^3\right)ds\\
			\lesssim& \langle t \rangle^{-\frac{4N+2}{4N}}[\xi]^{2N}_{\frac{1}{4N}}\left([\xi]_{\frac{1}{4N}}A_f^2+A_f^3\right)+\langle|\xi_0|^{4N}t \rangle^{-\frac{2N+3}{4N}}[\xi]^{2N}_{\frac{1}{4N}}\left([\xi]^{3}_{\frac{1}{4N}}+[\xi]_{\frac{1}{4N}}B_f^2+B_f^3\right)	.
		\end{align*}
		Similarly, 
		\begin{align*}
			V\lesssim& \int_{0}^{t}\langle t-s \rangle^{-6/5}\langle|\xi_0|^{4N}s \rangle^{-\frac{2N+3}{4N}}ds[\xi]^{2N+3}_{\frac{1}{4N}}\\
			\lesssim& \langle|\xi_0|^{4N}t \rangle^{-\frac{2N+3}{4N}}[\xi]^{2N+3}_{\frac{1}{4N}}.
		\end{align*}
		Combining the above estimates and recall the formula of $\mathcal{P}_1, \mathcal{P}_2$, we get desired estimates for $\mathcal{R}_{f}.$ 
		
		Then using formula \eqref{eq:Rxi} of $\mathcal{R}_{\xi}$, also combining the estimates of $\mathcal{R}_{f}$ and Lemma \ref{est:partial xi of R}, we get estimates of $\mathcal{R}_{\xi}$:
		\begin{align*}
			|\mathcal{R}_{\xi}|\lesssim& \langle |\xi_0|^{4N}t \rangle^{-\frac{4N+3}{4N}}[\xi]^{4N+3}_{\frac{1}{4N}}+\langle|\xi_0|^{4N}t \rangle^{-\frac{1}{2}}[\xi]^{2N}_{\frac{1}{4N}}\|\langle x \rangle^{-\sigma}B^{-1/2}\mathcal{R}_{f}\|_{L^2}+|\partial_{\bar{\xi}}\mathcal{R}|\\
			\lesssim& \langle t \rangle^{-\frac{4N+2}{4N}}\mathcal{P}_5+ \langle t \rangle^{-\frac{2N+1}{4N}}\langle|\xi_0|^{4N}t \rangle^{-\frac{2N+1}{4N}}\mathcal{P}_6 + \langle |\xi_0|^{4N}t \rangle^{-\frac{4N+2}{4N}}\mathcal{P}_7,
		\end{align*}
		where 
		\begin{align}
			\mathcal{P}_5&=[\xi]_{\frac{1}{4N}}A_f^2+A_f^3,\label{p5 formula}\\
			\mathcal{P}_6&=[\xi]^{2N}_{\frac{1}{4N}}\left(\|B^{-1/2}f(0)\|_X + |\xi_0|^{2N+1}+\mathcal{P}_1\right)  ,\label{p6 formula}\\
			\mathcal{P}_7&=[\xi]^{2N}_{\frac{1}{4N}}\mathcal{P}_2+[\xi]^{4N+2}_{\frac{1}{4N}}(A_f+B_f)+[\xi]_{\frac{1}{4N}}B_f^2+B_f^3  .\label{p7 formula}
		\end{align}
	\end{proof}

	\section{Proof of Theorem \ref{main-result}}\label{Sec-proof of main result}
	Now we shall prove Theorem \ref{main-result} by bootstrap arguments. Denote the original variables to be 
	$(\xi',f') = \mathcal{T}_{2N}(\xi,f)$, then by \eqref{assumption-data} the initial data $(\xi'_0, f'(0))$ satisfies
	\begin{align*}
		\|B^{-1/2}f'(0)\|_X\lesssim |\xi'_{0}|\lesssim \epsilon.
	\end{align*}
	By Theorem \ref{thm:nft} (ii), it is equivalent to
	\begin{align*}
		\|B^{-1/2}f(0)\|_X\lesssim |\xi_{0}|\lesssim \epsilon.
	\end{align*}
	Then by energy conservation, we have
	\begin{equation}\label{energy esti}
		|\xi(t)|+\|B^{1/2}f\|_{L^2} \lesssim |\xi(0)|+\|B^{1/2}f(0)\|_{L^2} \le (1+C_0)|\xi_0|.
	\end{equation}
	
	\begin{prop}\label{apriori est}
		Assume $[\xi]_{\frac{1}{4N}}(T)\lesssim |\xi_0|$, then \eqref{assumption f1} and \eqref{assumption f2} hold with
		\begin{align*}
			A_f(T)&\lesssim |\xi_{0}| ,\\
			B_f(T)&\lesssim |\xi_{0}|^{2N+1} ,\\
			C_f(T)&\lesssim |\xi_{0}| ,\\
			D_f(T)&\lesssim |\xi_{0}|^{2N+1} ,\\
		\end{align*}
		consequently,
		\begin{align*}
			|\mathcal{R}_{\xi}|\lesssim \langle t \rangle^{-\frac{4N+2}{4N}}|\xi_{0}|^3+ \langle t \rangle^{-\frac{2N+1}{4N}}\langle|\xi_0|^{4N}t \rangle^{-\frac{2N+1}{4N}}|\xi_{0}|^{2N+1} + \langle |\xi_0|^{4N}t \rangle^{-\frac{4N+2}{4N}}|\xi_{0}|^{4N+3}.
		\end{align*}	
	\end{prop}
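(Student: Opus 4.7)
The plan is to run a bootstrap on the four quantities $A_f(T),B_f(T),C_f(T),D_f(T)$ by inserting Proposition \ref{thm:hat-Rf} into Proposition \ref{est:f}. Under the hypothesis $[\xi]_{\frac{1}{4N}}(T)\lesssim |\xi_0|$ and the data bound $\|B^{-1/2}f(0)\|_X\lesssim |\xi_0|$ obtained from \eqref{assumption-data} together with Theorem \ref{thm:nft}(ii), Proposition \ref{est:f} forces
\[
A_f\lesssim |\xi_0|+\mathcal{P}_1,\quad B_f\lesssim |\xi_0|^{2N+1}+\mathcal{P}_2,\quad C_f\lesssim |\xi_0|+\mathcal{P}_3,\quad D_f\lesssim |\xi_0|^{2N+1}+\mathcal{P}_4,
\]
so everything reduces to checking that the quantities $\mathcal{P}_1,\mathcal{P}_2,\mathcal{P}_3,\mathcal{P}_4$ defined in \eqref{p1 formula}--\eqref{p4 formula} are of strictly higher order in $|\xi_0|$ than the leading terms.

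The bootstrap ansatz is $A_f,C_f\le K|\xi_0|$ and $B_f,D_f\le K|\xi_0|^{2N+1}$ for some $K\gg 1$ to be chosen. I would plug this ansatz, together with $[\xi]_{\frac{1}{4N}}\lesssim |\xi_0|$, into \eqref{p1 formula}--\eqref{p4 formula}. A term-by-term inspection shows that every summand in $\mathcal{P}_1$ and $\mathcal{P}_3$ is $\lesssim |\xi_0|^2$ (for instance $[\xi]^2|\xi_0|^{-1}C_f\lesssim |\xi_0|^2$ and $|\xi_0|^{4/3}A_f^{5/3}\lesssim |\xi_0|^3$), while every summand in $\mathcal{P}_2$ and $\mathcal{P}_4$ is $\lesssim |\xi_0|^{2N+3}$; the only summand needing care is $|\xi_0|^{-(2N-1)}B_f^2$ in $\mathcal{P}_4$, but since $B_f^2\lesssim |\xi_0|^{4N+2}$ the net contribution is $|\xi_0|^{2N+3}$, exactly one power better than the leading term. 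Feeding this back closes the bootstrap once $\epsilon$ (hence $|\xi_0|$) is small enough to beat the implicit constant, and a standard continuity argument in $T$ (using that $A_f,B_f,C_f,D_f$ are finite and continuous in $T$ on the interval where $f$ exists) promotes the a priori bound into the claimed bound.

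The main potential obstacle is precisely the monomial $|\xi_0|^{-(2N-1)}B_f^2$ appearing in $\mathcal{P}_4$: if we had only $B_f\lesssim |\xi_0|^{2N}$, this term would be $O(|\xi_0|^{2N+1})$ and would fail to be subleading relative to the driving term $[\xi]^{2N+1}\lesssim |\xi_0|^{2N+1}$, breaking the bootstrap. It is therefore essential that Proposition \ref{est:f} actually delivers $B_f\lesssim |\xi_0|^{2N+1}$ (with a clean $2N+1$), which is where the normal-form identity $\mu+\nu\ge 2N+1$ in the definition of $M$ pays off. A similar but milder margin (one power of $|\xi_0|$) appears in the $|\xi_0|^{4/3}A_f^{5/3}$ term from Lemma \ref{est:partial f of R 1}.

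Once the four bounds are established, the $\mathcal{R}_\xi$ estimate is obtained by substituting into \eqref{p5 formula}--\eqref{p7 formula}: directly $\mathcal{P}_5\lesssim |\xi_0|^3$, $\mathcal{P}_6\lesssim |\xi_0|^{2N}\cdot|\xi_0|=|\xi_0|^{2N+1}$ using the already-established $\mathcal{P}_1\lesssim |\xi_0|^2$, and $\mathcal{P}_7\lesssim |\xi_0|^{4N+3}$ using $\mathcal{P}_2\lesssim |\xi_0|^{2N+3}$ together with $B_f\lesssim |\xi_0|^{2N+1}$. Inserting these into Proposition \ref{est:Rf Rxi} gives exactly the claimed pointwise bound on $|\mathcal{R}_\xi|$.
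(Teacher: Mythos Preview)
Your proposal is correct and follows essentially the same route as the paper's proof: a bootstrap with the ansatz $A_f,C_f\le K|\xi_0|$, $B_f,D_f\le K|\xi_0|^{2N+1}$, plugging $[\xi]_{\frac{1}{4N}}\lesssim|\xi_0|$ into \eqref{p1 formula}--\eqref{p4 formula} to get $\mathcal P_1,\mathcal P_3\lesssim K^2|\xi_0|^2$ and $\mathcal P_2,\mathcal P_4\lesssim K^2|\xi_0|^{2N+3}$, then closing via Proposition~\ref{est:f} and reading off $\mathcal P_5,\mathcal P_6,\mathcal P_7$ from \eqref{p5 formula}--\eqref{p7 formula}. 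The paper additionally notes that the bootstrap base case at small $t$ comes from the energy bound \eqref{energy esti}; your remark that the critical term is $|\xi_0|^{-(2N-1)}B_f^2$ in $\mathcal P_4$ is a useful observation the paper leaves implicit.
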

	\begin{proof}
		The proof consists of a bootstrap argument using Proposition \ref{est:f} and \ref{thm:hat-Rf}. First, by \eqref{energy esti}, the result \eqref{assumption f1} and \eqref{assumption f2} hold for small $t<1$. Then, assume this is true for $0<t<t^*<T$ with
		\begin{align*}
			A_f(T)&\le K |\xi_{0}| ,\\
			B_f(T)&\le K |\xi_{0}|^{2N+1} ,\\
			C_f(T)&\le K |\xi_{0}| ,\\
			D_f(T)&\le K |\xi_{0}|^{2N+1} ,\\
		\end{align*}
		where $K$ is a large constant to be chosen. By $[\xi]_{\frac{1}{4N}}(T)\lesssim  |\xi_0|$ and Proposition \ref{thm:hat-Rf}, \eqref{assumption RfL8} and \eqref{assumption RfL4} hold for $0<t<t^*$ with
		\begin{align*}
			\mathcal{P}_1&\lesssim |\xi_{0}|^2A_f+|\xi_{0}|C_f+|\xi_{0}|A_f^2+ |\xi_{0}|A_f C_f+|\xi_{0}|A_f D_f+  |\xi_{0}|^{4/3}A_f^{5/3}\lesssim K^2|\xi_{0}|^2,\\
			\mathcal{P}_2&\lesssim |\xi_{0}|^{2N+3}+ |\xi_{0}|^{2}B_f+ |\xi_{0}|^{2}D_f+|\xi_{0}|B_f^2+|\xi_{0}|B_fC_f+|\xi_{0}|B_fD_f+|\xi_{0}|^{4/3}B_f^{5/3}\lesssim K^2|\xi_{0}|^{2N+3},\\
			\mathcal{P}_3&\lesssim |\xi_{0}|^2A_f+ |\xi_{0}|^2C_f+|\xi_{0}|A_f^2+ |\xi_{0}|A_f C_f+|\xi_{0}|A_f D_f+ |\xi_{0}|A_f^{2}\lesssim K^2|\xi_{0}|^3,\\
			\mathcal{P}_4&\lesssim |\xi_{0}|^{2N+3}+|\xi_{0}|^{2}B_f+|\xi_{0}|^{2}D_f+|\xi_{0}|B_f^2+|\xi_{0}|B_fC_f+|\xi_{0}|B_fD_f+|\xi_{0}|^{-(2N-1)}B_f^{2}\lesssim K^2|\xi_{0}|^{2N+3}.
		\end{align*}
		By Proposition \ref{est:f}, we have 
		\begin{align*}
			&\|B^{-1/2}f\|_{L^8}\lesssim \langle t \rangle^{-\frac{2N+1}{4N}}\Big(|\xi_{0}|+K^2|\xi_{0}|^2\Big)
			+\langle |\xi_0|^{4N}t \rangle^{-\frac{2N+1}{4N}}\Big(|\xi_{0}|^{2N+1}+K^2|\xi_{0}|^{2N+3}\Big)
			,\\
			&\|\langle x \rangle^{-\sigma}B^{1/2}f\|_{L^4}\lesssim \langle t \rangle^{-\frac{1}{2}}\Big(|\xi_{0}|+K^2|\xi_{0}|^3\Big)+\langle |\xi_0|^{4N}t \rangle^{-\frac{1}{2}}\Big(|\xi_{0}|^{2N+1}+K^2|\xi_{0}|^{2N+3}\Big).
		\end{align*}
		By choosing $K$ large and $|\xi_0|$ small we complete the bootstrap argument. In addition, we have
		\begin{align}\label{p formula}
			\mathcal{P}_5&=[\xi]_{\frac{1}{4N}}A_f^2+A_f^3\lesssim |\xi_{0}|^3,\\
			\mathcal{P}_6&=[\xi]^{2N}_{\frac{1}{4N}}\left(\|B^{-1/2}f(0)\|_X + |\xi_0|^{2N+1}+\mathcal{P}_1\right)\lesssim |\xi_{0}|^{2N+1} ,\\
			\mathcal{P}_7&=[\xi]^{2N}_{\frac{1}{4N}}\mathcal{P}_2+[\xi]^{4N+2}_{\frac{1}{4N}}(A_f+B_f)+[\xi]_{\frac{1}{4N}}B_f^2+B_f^3\lesssim |\xi_{0}|^{4N+3},
		\end{align}
		by Proposition \ref{est:Rf Rxi} we get estimates of $|\mathcal{R}_{\xi}|$.
	\end{proof}

	\begin{proof}[Proof of Theorem \ref{main-result}]
		We use bootstrap argument again to show that
		\begin{align}\label{eq:xi-apt}
			[\xi]_{\frac{1}{4N}}(T)\le 8C^*|\xi_0|
		\end{align}
		holds for all $T>0$, where $C^*= [1+ (4N\gamma)^{-1}]^\frac{1}{4N}(1+C_0)$ is an absolute constant. Note that by energy conservation, \eqref{eq:xi-apt} holds for $T\le |\xi_0|^{-4N(1-\delta_{0})}$. For $T\ge |\xi_0|^{-4N(1-\delta_{0})},$ we assume that
		\begin{align}\label{assumption xi}
			[\xi]_{\frac{1}{4N}}(T^*)\le 8C^* |\xi_0|
		\end{align}
		for some $|\xi_0|^{-4N(1-\delta_{0})}\le T^*<T$. Then, by Proposition \ref{apriori est}, we have 	
		$$
		|\mathcal{R}_{\xi}|\lesssim \langle |\xi_0|^{4N}t \rangle^{-\frac{4N+2}{4N}}\left(|\xi_0|^{(4N+2)(1-\delta_{0})}+|\xi_0|^{(4N+2)(1-\delta_{0}/2)}+|\xi_0|^{4N+3}\right)
		$$
		for $0\le t\le T^*$. Choosing $\delta_{0}<\frac{1}{4N+2}$, we have 
		$
		|\mathcal{R}_{\xi}|\lesssim \langle |\xi_0|^{4N}t \rangle^{-\frac{4N+2}{4N}}|\xi_0|^{4N+1+\delta_1}
		$ for some $\delta_1>0.$
		Hence, we can apply Theorem \ref{ode} at $t_0= |\xi_0|^{-4N(1-\delta_{0})}$ with $Q_0 \approx |\xi_0|^{4N+1+\delta_1}.$ Note that 
		\begin{align*}
			|r(t_0) - r(0)| &\le \int_{0}^{t_0}\Big(4N\gamma r^2(t)+4N	|\mathcal{R}_{\xi}|r^{\frac{4N-1}{4N}}(t)\Big)dt\\
			&\le C\int_{0}^{t_0}\Big(|\xi_0|^{8N}+|\xi_0|^{4N+1+\delta_1}|\xi_0|^{4N-1}\Big)dt\\
			&\le C|\xi_0|^{4N(1+\delta_{0})}\\
			&\le \frac{1}{2}r(0),
		\end{align*}
		we have $\frac12 |\xi_{0}|^{4N} \le |\xi(t_0)|^{4N}\le \frac32|\xi_{0}|^{4N}.$ Therefore, we get
		\begin{align*}
			|\xi(t)|^{4N}\le & 4(1+4N\gamma|\xi_0|^{4N} t)^{-1}\left(|\xi_0|^{4N}+\frac{C(C^*)|\xi_0|^{4N+\delta_1/2}}{(1+4N\gamma|\xi_0|^{4N} t)^{\delta/2}}\right)\\
			\le& 5\left(C^*\right)^{4N}(1+|\xi_0|^{4N} t)^{-1}|\xi_0|^{4N}.	
		\end{align*}
		The last inequality holds by using
		$$(1+4N\gamma|\xi_0|^{4N} t)^{-1}\le \left(C^*\right)^{4N} (1+|\xi_0|^{4N} t)^{-1} $$
		and choosing $|\xi_0|$ to be sufficiently small such that $C(C^*)|\xi_0|^{\delta_1/2}\le 1$.

		Next, we shall prove the lower bound of $|\xi|$. We have proved that 
		$
		|\xi(t)|^{4N}\approx |\xi_0|^{4N}\approx(1+|\xi_0|^{4N} t)^{-1}|\xi_0|^{4N}
		$
		for $0\le T\le |\xi_0|^{-4N(1-\delta_{0})}$. For $T\ge |\xi_0|^{-4N(1-\delta_{0})},$ by Proposition \ref{apriori est} we have 
		$$
		|\mathcal{R}_{\xi}|\lesssim \langle |\xi_0|^{4N}t \rangle^{-\frac{4N+2}{4N}}|\xi_0|^{4N+1+\delta_1},
		$$
		thus using Lemma \ref{ode} we also obtain $|\xi(t)|^{4N}\gtrsim (1+|\xi_0|^{4N} t)^{-1}|\xi_0|^{4N}$. Combining above estimates we obtain 
		$$ |\xi(t)|\approx \frac{|\xi_0|}{(1+|\xi_0|^{4N} t)^{\frac{1}{4N}}}.$$
		By Proposition \ref{apriori est}, we also have 
		$$\|B^{-1/2}f\|_{L^8}\lesssim \langle t \rangle^{-\frac{2N+1}{4N}}|\xi_0|
		+\langle |\xi_0|^{4N}t \rangle^{-\frac{2N+1}{4N}}|\xi_0|^{2N+1}.$$ 
		By the property of normal form transformation, for the original variables
		$(\xi',f') = \mathcal{T}_{2N}(\xi,f)$,
		we have 
		$$ |\xi'(t)|\approx \frac{|\xi'_0|}{(1+|\xi'_0|^{4N} t)^{\frac{1}{4N}}}$$
		and
		$$\|B^{-1/2}f'\|_{L^8}\lesssim \langle t \rangle^{-\frac{2N+1}{4N}}|\xi'_0|
		+\langle |\xi_0'|^{4N}t \rangle^{-\frac{2N+1}{4N}}|\xi'_0|^{3}.$$
		Write $\xi'(t)=\rho(t)e^{-\mathrm{i}(\omega t+\theta(t))}$, then
		$q(t)=\sqrt{\frac{2}{\omega}} \operatorname{Re} \xi(t)=\sqrt{\frac{2}{\omega}}\rho(t)\cos(\omega t+\theta(t))$, and
		$$u(t,x) = R(t) \cos(\omega t+\theta(t))\varphi(x) +\eta(t,x)$$ with $R(t)=\sqrt{\frac{2}{\omega}}\rho(t).$
		Hence, the above estimates are equivalent to
		$$ |R(t)|\approx \frac{|R(0)|}{(1+|R(0)|^{4N} t)^{\frac{1}{4N}}}$$
		and
		$$\|\eta(t)\|_{L^8}\lesssim \frac{|R(0)|}{(1+ t)^{\frac{2N+1}{4N}}}+ \frac{|R(0)|^{3}}{(1+|R(0)|^{4N} t)^{\frac{3}{4N}}}.$$
		In addition, since 
		$$\dot{\xi}'+\mathrm{i}\omega\xi'=\dot{\rho}e^{-\mathrm{i}(\omega t+\theta)}-\mathrm{i}\dot{\theta}\xi',
		$$
		where $|\dot{\xi}'+\mathrm{i}\omega\xi'|=\mathcal{O}(|t|^{-\frac{3}{4N}}), |\dot{\rho}|=\mathcal{O}(|t|^{-\frac{3}{4N}})$, we get $|\dot{\theta}|=\mathcal{O}(|t|^{-\frac{1}{2N}})$, hence $\theta(t) = \mathcal{O}(|t|^{1-\frac{1}{2N}}),$	which completes the proof of Theorem \ref{main-result}.
	\end{proof}
	
	\section*{Acknowledgment}
	We would like to thank H. Jia for introducing the problem to us and for the useful discussions. The authors were in part supported by NSFC (Grant No. 11725102), National Support Program for Young Top-Notch Talents, and Shanghai Science and Technology Program (Project No. 21JC1400600 and No. 19JC1420101). 
	
	\appendix
	\section{Convolution Estimates}
	The following lemma deals with the decay rate of the convolution of two functions, which is used frequently in the asymptotic analysis in Section \ref{Sec-aymptotic behavior} and Section \ref{Sec-error estimates}.
	\begin{lem}\label{convo1}
		Let $0<\delta<1$ and $0< f(t) \lesssim  \min\{|t|^{-(1+\delta)},|t|^{-(1-\delta)}\}$, then
		\begin{equation}
			\int_{0}^{t}f(t-s)\langle |\xi_0|^{4N}s\rangle^{-\alpha}ds\lesssim \langle |\xi_0|^{4N}t\rangle^{-\alpha}, \quad  \forall \  0\le \alpha\le 1+\delta.
		\end{equation}
		
	\end{lem}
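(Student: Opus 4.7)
The plan is to split the interval of integration $[0,t]$ at its midpoint and estimate each piece separately. Before doing so, observe that the two-sided bound on $f$ gives $f\in L^{1}((0,\infty))$: both $\int_{0}^{1}\tau^{-(1-\delta)}d\tau$ and $\int_{1}^{\infty}\tau^{-(1+\delta)}d\tau$ converge, so $\|f\|_{L^{1}}\le C(\delta)$. Set $\Lambda:=|\xi_{0}|^{4N}$ and $T:=\Lambda t$, so that the target is $\int_{0}^{t}f(t-s)\langle\Lambda s\rangle^{-\alpha}ds\lesssim\langle T\rangle^{-\alpha}$.

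The short-time regime $T\le 1$ is immediate: since $\langle T\rangle^{-\alpha}\sim 1$, one estimates the integral crudely by $\int_{0}^{t}f(t-s)ds\le\|f\|_{L^{1}}\lesssim 1$.

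For $T\ge 1$, I split $[0,t]=[0,t/2]\cup[t/2,t]$. On the near-endpoint piece $[t/2,t]$, the weight is essentially constant, $\langle\Lambda s\rangle^{-\alpha}\lesssim\langle T\rangle^{-\alpha}$, so pulling it out and integrating $f(t-s)$ in $\tau=t-s\in[0,t/2]$ yields a bound $\lesssim\langle T\rangle^{-\alpha}\|f\|_{L^{1}}$. On the far-endpoint piece $[0,t/2]$, we have $t-s\ge t/2$, which is $\ge 1$ in the relevant small-$\Lambda$ regime (where $\Lambda\le 1$ forces $t\ge 1/\Lambda\ge 1$), so the bound $f(t-s)\lesssim(t-s)^{-(1+\delta)}\lesssim t^{-(1+\delta)}$ pulls out a factor of $t^{-(1+\delta)}$, leaving $\int_{0}^{t/2}\langle\Lambda s\rangle^{-\alpha}ds$. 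Substituting $u=\Lambda s$ converts this to $\Lambda^{-1}\int_{0}^{T/2}\langle u\rangle^{-\alpha}du$, a standard power-function integral of size $\sim T^{\max(1-\alpha,0)}$ (with a logarithm in the borderline case $\alpha=1$).

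The decisive step is checking that these ingredients combine to give $\langle T\rangle^{-\alpha}$. Writing $t^{-(1+\delta)}=T^{-(1+\delta)}\Lambda^{1+\delta}$, the far-piece contribution collapses to a product of the form $T^{-(1+\delta)+\max(1-\alpha,0)}\Lambda^{\delta}$. The hypothesis $\alpha\le 1+\delta$ is exactly what makes the resulting $T$-exponent $\le-\alpha$, while $\Lambda^{\delta}\le 1$ (since $\Lambda$ is small in the paper's setting) keeps the $|\xi_{0}|$-dependent factor harmless. I expect the main obstacle to be this bookkeeping step, particularly the logarithmic $\alpha=1$ case and the extreme borderline $\alpha=1+\delta$ where the exponent margin vanishes; both can be handled by absorbing a small $T^{-\epsilon}$ loss into the power-integral estimate, e.g.\ replacing $\alpha$ by $\alpha-\epsilon$ when $\alpha=1$.
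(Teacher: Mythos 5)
Your proof is correct and follows essentially the same strategy as the paper's: split at $s=t/2$, use the integrability of $f$ on the near-endpoint piece where the weight is comparable to $\langle\Lambda t\rangle^{-\alpha}$, and use the sharp decay bound $f(t-s)\lesssim t^{-(1+\delta)}$ on the far piece while integrating the weight. The only cosmetic difference is that the paper further splits $[0,t/2]$ at $s=|\xi_0|^{-4N}$ into two subintervals and treats the cases $\alpha<1$, $\alpha>1$ separately, whereas you package both into a single estimate $\int_0^{T/2}\langle u\rangle^{-\alpha}\,du\sim T^{\max(1-\alpha,0)}$; both versions use the smallness of $|\xi_0|$ and both leave the borderline $\alpha=1$ log to a small $\epsilon$-absorption, so the arguments are materially the same.
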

	\begin{proof} Note that $f$ is integrable on the whole line, thus it suffices to consider $t\ge 2|\xi_0|^{-4N}$. Then we have
		\begin{align}
			\int_{0}^{t}f(t-s)\langle |\xi_0|^{4N}s\rangle^{-\alpha}ds\le& \int_{0}^{|\xi_0|^{-4N}}f(t-s)\langle |\xi_0|^{4N}s\rangle^{-\alpha}ds+\int_{|\xi_0|^{-4N}}^{t/2}f(t-s)\langle |\xi_0|^{4N}s\rangle^{-\alpha}ds\nonumber\\
			&+\int_{t/2}^{t}f(t-s)\langle |\xi_0|^{4N}s\rangle^{-\alpha}ds\nonumber\\
			\triangleq &I+II+III,    	\nonumber
		\end{align}
		where 
		\begin{align*}
			I&\lesssim t^{-(1+\delta)}|\xi_0|^{-4N} \lesssim  \langle |\xi_0|^{4N}t\rangle^{-(1+\delta)},\\
			II&\lesssim  t^{-(1+\delta)}\int_{|\xi_0|^{-4N}}^{t/2}\langle |\xi_0|^{4N}s\rangle^{-\alpha}ds \lesssim  \begin{cases}
				t^{-(1+\delta)}|\xi_0|^{-4N\alpha}t^{1-\alpha} \lesssim  (|\xi_0|^{4N}t)^{-\alpha} &  if \quad\alpha<1,\\ t^{-(1+\delta)}|\xi_0|^{-4N\alpha}|\xi_0|^{4N(\alpha-1)}\lesssim (|\xi_0|^{4N}t)^{-(1+\delta)} & if \quad \alpha>1,
			\end{cases} \\
			III&\lesssim   \langle |\xi_0|^{4N}t\rangle^{-\alpha}\int_{t/2}^{t}f(t-s)ds	\lesssim \langle |\xi_0|^{4N}t\rangle^{-\alpha}.
		\end{align*} 
		Combining the above estimates, we obtain the desired decay estimate.
	\end{proof}
	Similarly, we can also prove the following lemma:
	\begin{lem}\label{convo2}
		For $\alpha>1$, we have 
		\begin{equation}
			\int_{0}^{t}(t-s)^{-1/2}\langle |\xi_0|^{4N}s\rangle^{-\alpha}ds\lesssim |\xi_0|^{-2N}\langle |\xi_0|^{4N}t\rangle^{-1/2}.
		\end{equation}
	\end{lem}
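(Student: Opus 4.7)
The plan is to mimic the dyadic splitting used in the proof of Lemma \ref{convo1}, adjusting for the weaker endpoint singularity $(t-s)^{-1/2}$ and tracking the new factor $|\xi_0|^{-2N}$ that appears on the right-hand side. First I would dispose of the short-time regime $t \le 2|\xi_0|^{-4N}$, where $\langle |\xi_0|^{4N}t\rangle \approx 1$: one simply discards the weight, bounds $\int_0^t (t-s)^{-1/2}\,ds = 2\sqrt{t}$, and observes that $\sqrt{t} \lesssim |\xi_0|^{-2N}$, which already matches the target up to constants.

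For $t \ge 2|\xi_0|^{-4N}$, I would split $\int_0^t = \int_0^{|\xi_0|^{-4N}} + \int_{|\xi_0|^{-4N}}^{t/2} + \int_{t/2}^t =: I + II + III$. On $I$ the weight is $\approx 1$ and $(t-s)^{-1/2} \lesssim t^{-1/2}$, giving
\begin{equation*}
I \lesssim t^{-1/2} |\xi_0|^{-4N} = |\xi_0|^{-2N}(|\xi_0|^{4N}t)^{-1/2}.
\end{equation*}
On $II$ I again use $(t-s)^{-1/2} \lesssim t^{-1/2}$ and pull out powers of $|\xi_0|^{4N}$ from the weight:
\begin{equation*}
II \lesssim t^{-1/2} |\xi_0|^{-4N\alpha} \int_{|\xi_0|^{-4N}}^{t/2} s^{-\alpha}\,ds \lesssim t^{-1/2} |\xi_0|^{-4N\alpha} \cdot |\xi_0|^{-4N(1-\alpha)} = |\xi_0|^{-2N} (|\xi_0|^{4N}t)^{-1/2}.
\end{equation*}
Here the lower endpoint dominates $\int s^{-\alpha}\,ds$ precisely because $\alpha > 1$. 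Finally, on $III$ the weight is essentially frozen at $\langle |\xi_0|^{4N}t\rangle^{-\alpha}$, and
\begin{equation*}
III \lesssim (|\xi_0|^{4N}t)^{-\alpha} \int_{t/2}^{t}(t-s)^{-1/2}\,ds \lesssim (|\xi_0|^{4N}t)^{-\alpha} \sqrt{t} = |\xi_0|^{-2N} (|\xi_0|^{4N}t)^{1/2-\alpha},
\end{equation*}
which is bounded by $|\xi_0|^{-2N}\langle |\xi_0|^{4N}t\rangle^{-1/2}$ since $\alpha > 1$ gives $1/2-\alpha < -1/2$. Summing $I+II+III$ closes the estimate.

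The only genuine subtlety is the middle piece $II$: its bound is the sharp place where the hypothesis $\alpha > 1$ is used, because the integral $\int_{|\xi_0|^{-4N}}^{\infty} s^{-\alpha}\,ds$ must converge and its value $\sim |\xi_0|^{-4N(1-\alpha)}$ must cancel the prefactor $|\xi_0|^{-4N\alpha}$ down to exactly $|\xi_0|^{-4N}$. Nothing else in the argument is obstructed, and I expect no further complications beyond carefully bookkeeping the powers of $|\xi_0|$.
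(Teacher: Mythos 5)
Your proof is correct and follows the same three-piece decomposition ($[0,|\xi_0|^{-4N}]$, $[|\xi_0|^{-4N},t/2]$, $[t/2,t]$) that the paper uses for Lemma \ref{convo1}; the paper itself only says ``similarly'' for Lemma \ref{convo2}, and your argument fills in exactly the expected details, with the correct bookkeeping of the factor $|\xi_0|^{-2N}$ and the correct use of $\alpha>1$ in piece $II$.
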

	
	\frenchspacing
	\bibliographystyle{plain}
	
\end{document}